\DeclareMathOperator{\card}{Card}
\DeclareMathOperator{\image}{image}
\newtheorem {theorem} {Theorem} [section]
\newtheorem{lemma}[theorem] {Lemma} 
\newtheorem {question}  [theorem] {Question}
\newtheorem {definition} [theorem] {Definition} 
\newtheorem {proposition}  [theorem]{Proposition}
\newtheorem {notation}[theorem] {Notation}
\newtheorem {terminology}[theorem] {Terminology}
\newtheorem {remark} [theorem] {Remark}
\numberwithin {equation} {section}
\DeclareMathOperator{\obj}{obj}
\begin{document}
\href{http://yashamon.github.io/web2/papers/fukayaI.pdf}{Direct link to author's version}
\author{Yasha Savelyev} 
\address{University of Colima, Bernal Díaz del
Castillo 340,
Col. Villas San Sebastian,
28045, Colima, Colima,
Mexico}
\email{yasha.savelyev@gmail.com}
\title [Global Fukaya category I]{Global Fukaya category I}
\begin {abstract} 
Let $Ham (M,\omega ) $ denote the Frechet Lie group of
Hamiltonian symplectomorphisms of a monotone
symplectic manifold $(M, \omega) $.  
Let $NFuk (M, \omega)$ be the $A
_{\infty} $-nerve of the Fukaya category $Fuk
(M, \omega)$, and let $(|\mathbb{S}|, NFuk (M, \omega))$ denote the $NFuk
(M, \omega)$ component of the ``space  of
$\infty$-categories'' 
$|\mathbb{S}| $. 
Using Floer-Fukaya theory for
a monotone $(M, \omega)$ we construct a natural up
to homotopy classifying map 
\begin{equation*}
   BHam (M, \omega) \to (|\mathbb{S}|, NFuk (M, \omega)).
\end{equation*}
This verifies one sense of a conjecture of
Teleman on existence of action of $Ham (M ,
\omega)$ on the Fukaya category of $(M, \omega )
$. This construction is very closely related to
the theory of the Seidel homomorphism and the
quantum characteristic classes of the author, and 
this map is intended to be the deepest expression of their underlying geometric theory.
In part II the above map is shown  to be nontrivial by an explicit calculation. In particular, we arrive at a new non-trivial
``quantum'' invariant of any smooth manifold,
which motives the statement of a kind of ``quantum'' Novikov conjecture.
\end {abstract}
\maketitle    
\tableofcontents
\section {Introduction}
Smooth fibrations over a Lorentz 4-manifold with fiber a Calabi-Yau 6-fold are a model for the physical background in string theory. This suggests that there may be some string theory linked mathematical invariants of such a fibration. Indeed, when the structure group of $M \hookrightarrow P \to X$
can be reduced to the group of Hamiltonian
symplectomorphisms of $M$, (with its $C ^{\infty}$
topology)  in which
case $P$ is called a \emph{Hamiltonian fibration}, 
there are a couple of basic invariants of such a fibration based on
Floer-Gromov-Witten theory. One such example is the Seidel representation
\cite{citeSeidelpi1ofsymplecticautomorphismgroupsandinvertiblesinquantumhomologyrings} and
the related quantum characteristic classes of the author
\cite{citeSavelyevQuantumcharacteristicclassesandtheHofermetric}. Related
invariants are also proposed by Hutchings
\cite{citeHutchingsFloerhomologyoffamilies.I}.
Even earlier there is work on
parametric Gromov-Witten invariants of Hamiltonian fibrations by Le-Ono
\cite{citeLeOnoParametrizedGromov-Witteninvariantsandtopologyofsymplectomorphismgroups} and Olga Buse \cite{citeOlgaBuse}.
At the same time, Costello's  theorem
\cite{citeCostelloTopologicalconformalfieldtheoriesandCalabi-Yaucategories} 
on reconstruction of topological conformal
field theories from Calabi-Yau $A _{\infty}$ categories suggests that the
above invariants must have a similar reconstruction principle.

For a given Hamiltonian fibration $P$ as above, the $A _{\infty} $ Fukaya
categories of the fibers fit into a ``family'', although exactly what this ``family'' should mean is a non-trivial problem by itself, since we must somehow remember the continuity of $P$. Then our basic idea is that associated to a Hamiltonian fibration there
should be a classifying map from $X$ into an appropriate ``classifying'' space of $A _{\infty} $ categories, from
which the other invariants can be reconstructed via a version of 
Toen's derived Morita theory, 
This can also be understood to say that $Ham (M ,
\omega) $ naturally (continuously) acts on $Fuk
(M)  $, verifying in one sense a conjecture of
Teleman.  We say more
on this in Section \ref{sectionHochschild}.  

 This paper will be mostly self-contained, as we will explain many (especially algebraic) concepts used.
\subsection {A functor from the category of smooth 
simplices of $X$ to $A _{\infty}$ categories} \label{sec:IntroAfunctor}
The first basic ingredient for our construction is
as follows.
Given $P$ as above, and a choice of
geometric-analysis theoretic perturbation data
$\mathcal{D}$, to each smooth simplex
$$\Sigma: \Delta ^{n} \to X,$$ we associate an $A
_{\infty} $ category $F _{P} (\Sigma)$.  This data
$\mathcal{D} $ 
involves certain compatible choices of Hamiltonian
connections and almost complex structures, similar
to the kind in the construction of the Seidel
morphism
~\cite{citeSeidelpi1ofsymplecticautomorphismgroupsandinvertiblesinquantumhomologyrings}. 
This will be discussed in Section \ref{section:dataD}.

A key geometric ingredient is the following. Let
${\mathcal{R}}_{d}$, $d \geq 2$,  denote
the  moduli space of Riemann surfaces which are topologically
disks with $d+1$ punctures on
the boundary. Let $\overline{\mathcal{R}}_{d}$
denote the standard compactification. We construct natural, axiomatically determined maps from the universal
curves \footnote{Technically from certain spaces
obtained from the universal curves.}  over $\overline{\mathcal{R}}_{d} $, for
each $d$, into the standard topological
simplices $\Delta ^{n} $.
 This topological-combinatorial connection of the universal curves with simplices is new, and is likely of independent interest.

Let $X _{\bullet} $ denote the smooth singular set
of $X$ and let $P$ be as above. Let $$\Delta^{} (X)
= \Delta /X _{\bullet}$$ denote the category of
simplices of $X _{\bullet }$, see
Section \ref{section:simplexCategory} for
the particulars.

The above data is then extended to a functor
\begin{equation*} \label{eqF}
   F _{P}: \Delta (X) \to A _{\infty}-Cat ^{unit},
\end{equation*}
with $A _{\infty}-Cat ^{unit}$
the category of small, unital, $\mathbb{Z} _{2} $-graded $A _{\infty} $ categories over $\mathbb{Q}$, with morphisms strict embeddings, which are moreover
quasi-equivalences.   

We had mentioned above the ``space of $A
_{\infty}$ categories''. However, technically it
will be simpler to work with a related space of $\infty$-categories we denote by
$|\mathbb{S}| $, discussed
in Appendix \ref{section:prelimQuasi}.  
Slightly more
explicitly, it is the geometric realization
of a certain Kan complex $\mathbb{S} $  whose
vertices are $\infty$-categories, and whose edges
are  equivalences of $\infty$-categories,  called
categorical equivalences.

The connection of the functor $F _{P}$ with $\mathbb{S} $  comes via the
nerve functor $$N: A _{\infty}-Cat ^{unit} \to
sSet,$$
with right-hand side the category of simplicial
sets.  The functor $N$ is an analogue for
$A _{\infty} $ categories 
of the classical nerve construction, which is due
to Grothendieck.  The $A _{\infty}$ version, first
suggested in Lurie~\cite{citeLurieHigherAlgebraa}, can
be considered to be a special case of the more
general nerve construction for simplicial
categories,  and was developed by Faonte
\cite{citeFaonteSimplicialNerve}.
See also, Tanaka
\cite{citeLeeAfunctorfromLagrangiancobordismstotheFukayacategory}.
 What will be crucial for us
is that
$N$ takes an $A _{\infty}$ category to a
$\infty$-category.

One basic reason that working with $\mathbb{S} $
is useful, is that it 
will allow us to convert all the algebraic data of the
functor $F$ above, to the data of a single
combinatorial-topological 
object, which we call the global Fukaya
category $Fuk _{\infty} (P)$, as appearing in the
title of the paper. More specifically,
$Fuk _{\infty} (P)$ has the structure of a
categorical fibration (an analogue for
$\infty$-categories of Serre fibrations):     
\begin{equation}
   \label{eq:Fukinfty}
   NFuk (M,  \omega) \to  Fuk
_{\infty} (P) \to X _{\bullet},
\end{equation}
described in Section \ref{sec:GlobalFukaya}.
 This will be crucial for computations in Part II
~\cite{citeSavelyevGlobalFukayacategoryII}.

Together with suitable invariance, under
deformation of the perturbation data $\mathcal{D}
$, the categorical fibration \eqref{eq:Fukinfty}
leads to the
following theorem. Denote the
connected component of an element $\mathcal{X} \in
\mathbb{S} (0) $ (corresponding to an
$\infty$-category)  by
$(\mathbb{S}, \mathcal{X}) $, cf. Definition
\ref{def:connectedComponent}. In what follows $\mathcal{X}
$  will be $NFuk (M,\omega) $.  

\begin{theorem} \label{thmInfinityUniversal}
For $(M,\omega)$ a monotone symplectic manifold, 
and $M \hookrightarrow P \to X$  a smooth Hamiltonian
   fibration over a smooth manifold $X$,
there is a natural up homotopy map
  $$cl _{P}: X  \to |(\mathbb{S}, NFuk
   (M, \omega))|,$$ with $|\cdot|$ still denoting
   geometric realization. Moreover, this extends to the
   universal level, so that there is a natural up
   to homotopy map  $$cl: BHam
(M,\omega) \to |(\mathbb{S}, NFuk (M, \omega))|, $$   
  corresponding to the universal Hamiltonian $M$-fibration
$p: E _{M} \to BHam (M,  \omega) $. This is further
   natural, so that $$[cl _{P}]
   = [cl] \circ [\widetilde{cl} _{P}],$$ for
   $\widetilde{cl} _{P}: X \to BHam (M,  \omega)
   $ the classifying map of the Hamiltonian
   fibration $P$, and $[\cdot] $  denoting the
   homotopy class.
\end{theorem}   
Natural up to homotopy just means that the map is
natural in the homotopy category of topological
spaces, with morphisms
homotopy classes of continuous maps. 
The name $cl _{P}$  comes from the fact that in a
certain sense $cl _{P}$ is classifying. In fact it
classifies the categorical fibration $Fuk _{\infty} (P) \to X 
_{\bullet}
$.   The
proof of this theorem is in Section \ref{sectionFukayaHochschild}.

This theorem can also be interpreted to say that $Ham (M,
\omega)$ ``continuously acts'' on $NFuk (M) $.
\begin{remark} If we work in the category of simplicial sets, the action can be understood as follows. Suppose we have a simplicial action of $Ham (M, \omega) _{\bullet }
$ on $NFuk (M, \omega )$. Then we have an induced simplicial map $$BHam (M, \omega) _{\bullet} \to BAut (N Fuk (M,
\omega)), $$ with the group of simplicial automorphisms $Aut (N Fuk (M,  \omega) ) $ interpreted as a simplicial group,  and where $BG $  denotes the simplicial  nerve
of a simplicial group $G$. And it's easy to see that there is a natural map $$BAut (N Fuk (M, \omega) )  \to (\mathbb{S}, N Fuk (M, \omega)), $$ by the construction of $\mathbb{S} $. 
Now starting with  a simplicial map  
\begin{equation}
	\label{eq:BHam}
 BHam (M, \omega) _{\bullet } \to
	(\mathbb{S}, N Fuk (M,
	\omega))
\end{equation}
we do not generally get an induced homomorphism
$$Ham (M,  \omega) _{\bullet} \to Aut (NFuk (M, \omega)
), $$ naturally. But if we  take the based
loop space of both sides of \eqref{eq:BHam}
we get something   approaching this
homomorphism, since $\Omega B Ham (M,
\omega) _{\bullet} \simeq Ham (M,  \omega)
_{\bullet} $ (simplicial homotopy equivalence).  And since
the simplicial $H$-space $$\Omega (\mathbb{S}, NFuk (M,
\omega) ) $$ in a sense ``extends'' the 
group $$Aut (NFuk (M, \omega)),$$   (loops in
$\mathbb{S} $ based at $N Fuk (M,  \omega) $
are in correspondence with categorical
self-equivalences of $NFuk (M, \omega ) $, which generalize simplicial automorphisms, cf. Appendix \ref{appendix.quasi}.)  
\end{remark}
  
The continuous action of $Ham (M,  \omega) $ on $NFuk (M, \omega )$  is one interpretation of the existence of a
``continuous action'' of $Ham (M, \omega) $ on $Fuk
(M, \omega) $.  
And this verifies one sense of a conjecture of Teleman ICM 2014 on the existence of such an action.
A kind of discrete version of such an action can
be found in
Seidel~\cite[Section
10c]{citeSeidelFukayacategoriesandPicard-Lefschetztheory}.
Another interpretation of this ``continous
action'', in exact setting, appears in the work
of Oh-Tanaka ~\cite{citeOhTanakaCoherentActions}. There the
functor $F _{P}$ above (or a close relative)  is converted to a map of spaces by means of localizations of categories. This provides an
alternative algebraic topological
perspective.

In part II
\cite{citeSavelyevGlobalFukayacategoryII} of this
paper, this map is shown to be homotopically non-trivial in a specific example,
and some possibly unexpected geometric applications of this are
developed.
\subsection {Towards new invariants and quantum Novikov conjecture} By the above discussion we automatically obtain a
new  invariant of a Hamiltonian fibration $M \hookrightarrow P \to X$ as the
homotopy class of the classifying map
$cl _{P}: X \to |\mathbb{S}|$.  

It may be difficult to get intrinsic motivation for Hamiltonian fibrations for a
reader outside of symplectic geometry, as a start one may read
\cite{citeGuilleminLermanEtAlSymplecticfibrationsandmultiplicitydiagrams}.
However, as
one particular case we can fiberwise projectivize the complexified tangent bundle:
 $$P (X) = P (TX \otimes \mathbb{C}),$$ of a
 smooth manifold $X$. This $P (X)$ in particular has the structure of a
smooth Hamiltonian fibration with fiber
$\mathbb{CP} ^{r-1} $ for $r$ the real dimension
of $X$. In this way we also
 get a new invariant of a smooth $r$ manifold $X$,
 given by the homotopy class of the classifying
 map $$cl _{P (X)}: X \to (\mathbb{S}, NFuk (\mathbb{CP}
 ^{r-1} )),
 $$ 
induced by Theorem \ref{thmInfinityUniversal}.

Recall that Pontryagin classes of a smooth manifold are defined as Chern classes of its complexified tangent bundle. Novikov has shown that rational Pontryagin
classes are
topologically invariant. 
It is then very natural to ask the
 following, ``quantum'' variant of the Novikov conjecture:
 \begin{question} Suppose that $f: X \to Y$ is a homeomorphism of smooth
manifolds. Is $cl _{P(X)} $ homotopic to $cl _{P (Y) } \circ f$? 
 \end{question}  
I suspect that the answer is yes,  simply because
the whole construction involves a kind of integration
theory, not fantastically far removed from
Chern-Pontryagin theory, (if we understand
``Gromov-Witten counts'' as integration). But this would lead to
further intriguing questions. For example: how would the resulting invariants be related to more classical topological invariants of smooth manifolds?   

The answer of ``no'' is possibly even more interesting,
since it means that our construction gives new
smooth invariants of manifolds via
holomorphic curves in symplectic geometry.
\subsection {Hochschild and geometric Hochschild cohomology and homotopy groups
of $Ham (M, \omega)$} 
\label{sectionHochschild}
This section is an excursion, meant to relate our geometric theory with the algebraic derived Morita theory of Toen.
For an $A _{\infty} $ category $C$ we define $$HH _{geom} ^{2-i} (C) = \pi _{i}
(\mathbb{S}, NC), \quad i>2.
$$ The left-hand side is named geometric Hochschild cohomology, the name and notation will be justified shortly. By Theorem \ref{thmInfinityUniversal} above we then get:
 \begin{theorem} \label{thmGroupHomoIntro} For $ (M,\omega)$ monotone,  there is a natural group homomorphism 
\begin{equation} \label{eqClassifying1}
   \pi _{i-1} (Ham (M, \omega), id)  \to
   HH ^{2-i} _{geom}  (Fuk (M, \omega)), \quad i > 2.     
\end{equation}
\end{theorem}
$HH ^{*} (Fuk (M, \omega )) $ is known to be
isomorphic to $QH ^{*} (M) $ in some cases, for
example in the monotone setting, relevant to us
here, this is due to
Sheridan~\cite{citeNickSheridanOntheFukayaCategory}. 
And so the above morphism, when $i>2$, has the
same formal form as (a special case of) the author's quantum characteristic classes
\cite{citeSavelyevQuantumcharacteristicclassesandtheHofermetric}, taking the form of  homomorphisms:
$$\Psi: (\pi _{k} (\Omega Ham (M,  \omega), id)
\simeq \pi _{k+1} (Ham (M,  \omega), id))  \to QH _{2n+k}
(M,\omega), \quad $$ 
where $2n=\dim M$. 
This is provided there is a connection between $HH ^{*} (Fuk (M)) $ and $HH ^{*} _{geom}
(Fuk (M))$. Such a connection is described
further below. 
 This would be the most basic form of the
``reconstruction'' that was mentioned in the
first paragraph of the paper.

In Part II we calculate  with Hamiltonian $S ^{2} $ fibrations over $S ^{4} $ to
get:
\begin{theorem}  \label{thm:pi4} The map
   $$\left(\pi _{3} (Ham (S ^{2},  \omega), id) =
   \mathbb{Z} \right) \to \left(HH _{geom} ^{-2} (Fuk (S ^{2} )) = \pi _{4} 
   (\mathbb{S}, NFuk (S ^{2} )) \right),$$
determined by \eqref{eqClassifying1} is an injection.
\end{theorem}
This has some possibly surprising consequences,
particularly for the theory of singular
connections.
\subsubsection {Geometric Hochschild cohomology and  Toen's derived Morita theory
 }
 \label{section:Toen}
A small disclaimer. $HH _{geom} ^{*} (C)  $ is just a name for an object 
whose construction is immediate from work of Joyal
and Lurie, and quite possibly
appears elsewhere. We claim no originality for this construction.
What may however be
interesting is the connection to symplectic geometry that we discover
in these papers.

Let us then very briefly indicate the connection of $HH _{geom} ^{*} (C)  $ with Hochschild
cohomology via Toen's derived Morita theory. Let
$dg-Cat$  denote the category of differential
graded categories, a.k.a. dg categories with
morphisms quasi-equivalences. 
\begin{theorem} [Corollary 8.4 \cite{citeToenThehomotopytheoryofdg-categoriesandderivedMoritatheory}] \label{thmToen}
For a small dg category $C$, (with cohomological grading conventions) there are natural isomorphisms
\begin{align} \label {eq.toen1} \pi
_{i} (|dg-Cat|, C) \simeq HH ^{2-i} (C), \text{ for }i >2, \\
\label {eq.toen2} \pi _{2}  (|dg-Cat|, C)
\simeq HH ^{0} (C) ^{\times},
\end{align}
with $HH ^{0} (C) ^{\times} $ denoting  the multiplicative group of invertible
elements, and with $|dg-Cat|$ denoting the  geometric
realization of the nerve of $dg-Cat$, a.k.a. the
classifying space. Here $C \in |dg-Cat|$ is the element
   corresponding to $C \in dg-Cat$.
\end{theorem} 

On the other hand the nerve functor $N$ naturally induces a homomorphism, 
\begin{equation*}
   N _{*} : \pi
     _{i} (|dg-Cat|, C)| \to \pi
     _{i} (|\infty-\mathcal{C}at|, NC) \simeq \pi
   _{i} (|\mathbb{S}|, NC).
\end{equation*}
When $C$ is a $\mathbb{Z}
$-graded, (pre)-triangulated dg category over
$\mathbb{Q} $ there are folklore theorems of Lurie
(personal communication) to the
effect that this is an isomorphism.

Thus, in this case, for $i>2$ $$HH ^{2-i} (C) = \pi _{i}
(\mathbb{S}, NC) = HH _{geom} ^{2-i} (C),    $$ by our definition.  This
extends to $\mathbb{Z}$-graded rational (pre)-triangulated $A _{\infty} $
categories, along
the lines of Faonte~\cite{citeFaonteFunctors}.
\begin{remark}
   \label{remark:triangule}
As I understand, these hypotheses apply to at least
monotone symplectic manifolds if we
pre-triangulate the Fukaya categories.  It is important to note however that we do not
pre-triangulate the Fukaya categories in the
main construction of the paper, it should be
possible to do that, following the same
ideas,   but this possibly loses
information, and it may make the computation
   in Part II ~\cite{citeSavelyevGlobalFukayacategoryII} more difficult. Without pre-triangulating the
connection of $HH ^{*} _{geom}$ and $HH ^{*}$
appears to be more complicated.  It is also worth
noting that even if we did identify  $HH ^{*}
_{geom}$ and $HH ^{*}$ then there is still  a
hard geometric problem of identifying the
actual morphisms - the quantum characteristic
   classes/Seidel morphism and the morphisms from the data of
   $F$. So all in all the reconstruction 
is still an open problem.   In
   Oh-Tanaka~\cite{citeOhTanakaCoherentActions} a
different approach is taken. Starting with the
functor   $F$,  or a close  cousin, the authors
use categorical techniques of localization, which
allows to avoid introduction of the space
   $\mathbb{S} $. 
However, the above problem of identifying the
morphisms remains.
\end{remark}

\begin{remark}
In the case of $i=2$, by Theorem
   \ref{thmInfinityUniversal}, we have a homomorphism 
\begin{equation*}
   \pi _{1} (Ham (M,  \omega), id) \to \pi _{2} (\mathbb{S},
   N Fuk (M,  \omega) ).    
\end{equation*}
So again, if we could again identify $\pi _{2} (\mathbb{S}, N Fuk (M,  \omega) )$ with $(HH ^{0} (Fuk (M,  \omega) )) ^{\times} $ and the latter with $QH _{2n} (M) $, then we would, a priori only in form, recover the Seidel homomorphism 
\end{remark}
\subsection {Organization}   Section 3 is
concerned with preliminaries. The crucial
construction of the system of maps from the
universal curves to $\Delta^{n} $ is in Section 4.
Perturbation data $\mathcal{D} $  is constructed
in Section 5. The main functor $F$ is constructed
in Section 6. Finally, the global Fukaya category
in constructed in Sections 7,8. Section 7
contains the proofs of the main Theorems
\ref{thmInfinityUniversal},
\ref{thmGroupHomoIntro}.
\subsection{Acknowledgements}
I would like to thank Octav Cornea, Egor Shelukhin, and
Kaoru Ono for discussions and support,  as well as Kevin Costello and Paul Seidel for interest. Hiro Lee Tanaka for enthusiasm,  generously providing me with an early draft of his thesis and finding a number of misprints in a draft of the paper. Bertrand Toen for explaining to me an outline of the proof of some conjectures and for enthusiastic response. 
I also thank Jacob Lurie, for feedback on some
questions. Special thanks to the referees for
much help in the shaping of this paper.
The paper was primarily conceived while I was a CRM-ISM
postdoctoral fellow, and I am grateful for the wonderful
research atmosphere provided by CRM-Montreal. I am also
grateful for the hospitality of RIMS center at Kyoto university and ICMAT Madrid where parts of the paper were expanded.  
\section {Notations and conventions and large categories}
We use diagrammatic order for composition of morphisms in the Fukaya
category, and in $\infty$-categories  so $f \circ g$ means $$\cdot \xrightarrow{f}
\cdot \xrightarrow{g} \cdot,$$ as reversing order
for composition in $\infty$-categories is geometrically very confusing, since morphisms are identified with edges of
simplices. Elsewhere, we use the more common
Leibnitz functional convention. Although this is somewhat contradictory in practice things should be clear from context. By simplex and notation $\Delta ^{n} $ we will interchangeably mean the topological $n$-simplex and the standard representable $n$-simplex
as a simplicial set, for the latter we may also write $ \Delta ^{n} _{\bullet}
$.  

Given a category $C $ the over-category of  an object $c \in C$ is denoted by
$C/c$. We say that a morphism in $C$ is \emph{over $ c $} exactly if it is a morphism
in the over-category of $c$.  

Given an $A _{\infty} $ category by the nerve we always mean the $A _{\infty}
$ nerve $N$, as previously described. 

Some of our $\infty$-categories  are
``large'' with proper classes of simplices instead of sets. The standard formal treatment of this
is to work with Grothendieck universes. 
We shall not however make this explicit. 
\section{Preliminaries} \label{section:construction} 
\subsection {The simplex category of a smooth
manifold $X$}  \label{section:simplexCategory}
Let $\Delta$ denote the category of combinatorial
simplices, whose objects are totally ordered
finite sets $[n] = \{0, \ldots,
n\} $, with $hom _{\Delta } ([n], [m] ) $ the set
of non-strictly increasing maps   
\begin{equation*} \{0, \ldots, n\} \to \{0, \ldots,  m\}.
\end{equation*} 
A simplicial set $S _{\bullet}$ is a functor $ {S}
_{\bullet}: \Delta  \to Set ^{op}$.  We will
usually write $S _{\bullet } (n) $   instead of 
 $S
_{\bullet } ([n]) $, and this is called the \emph{set of
$n$-simplices} of $S _{\bullet }$. 

A map of simplicial set $f: A _{\bullet } \to S
_{\bullet }$ is a natural transformation of the
corresponding functors. 

 Let $\Delta ^{n} _{\bullet}$  denote the
simplicial set $\Delta ^{n}
 _{\bullet} = hom _{\Delta} (\cdot, [n])$. Then we
have the \emph{category of simplices}  over $S _{\bullet}$, $\Delta/S _{\bullet}$, whose set of objects is the set of natural transformations
$Nat (\Delta ^{n} _{\bullet}, S _{\bullet})$ and morphisms commutative diagrams
$$
\begin{tikzcd}
 \Delta ^{n} _ {\bullet} \ar [rd] \ar[r] & \Delta ^{m} _{\bullet}
 \ar[d] \\ & S _{\bullet},  \\
 \end{tikzcd}
$$
 s.t. the natural transformations $\Delta ^{n} _{\bullet} \to \Delta ^{m} _{\bullet}$
 are induced by maps $[n] \to [m]$. To simplify
 notation we
 rename:
\begin{equation*}
   \Delta  (S _{\bullet}):= \Delta/S _{\bullet}.
\end{equation*}

Let $\Delta ^{n}$ denote the standard
topological $n$-simplex, 
i.e.  $$\Delta^{n} := \{(x _{1}, \ldots, x _{n})
\in \mathbb{R} ^{n} \,|\, x _{1} + \ldots + x _{n} \leq 1,
\text{ and } \forall i:  x _{i} \geq 0  \}. $$ The
vertices of $\Delta^{n} $ are assumed ordered in
the standard way $0, \ldots, n$.  
Let $X$ be a smooth manifold.   We say that
$\Sigma: \Delta ^{n} \to X$ is a \textbf{\emph{smooth map}}
if it has an extension  $V \subset \mathbb{R} ^{d}
\to X$, for $V \supset \Delta ^{n} $ some open set.  
\begin{definition}\label{def:}
 We say that a smooth 
map $\Sigma: \Delta^{n}  \to X$  is
\textbf{\emph{collared}} if there is a
neighborhood $U \supset \partial \Delta^{n} $  in
$\Delta^{n} $, such that $\Sigma| _{U} =
\Sigma \circ ret$ for $ret: U \to \partial
\Delta^{n}  $  some smooth retraction.   Here
smooth means that $ret$ has an extension to a
   smooth
 map $V \subset \mathbb{R} ^{d} \to \mathbb{R}
   ^{d}  $, with $V \supset \partial \Delta^{n} $
   open in $\mathbb{R} ^{d} $.

\end{definition}
For $X$ a smooth manifold, define a simplicial set $X _{\bullet }$  by: $$X _{\bullet} (n) := C
^{\infty} _{col} (\Delta ^{n}, X),$$  with the right-hand side the set of
all smooth collared maps
$\Delta ^{n} \to X$.   It is easy to see that $X _{\bullet} $ is a Kan complex.    The same
surely holds without the collared condition but
the proof is more difficult. \footnote {A
reference is not known to me.}   For simplicity, we
will work with collared simplices throughout and
this may no
longer be mentioned.

In this case the simplex
category $$\Delta (X) :=\Delta^{}  (X _{\bullet })
$$   can be 
elaborated as follows. It is the category with
objects smooth, collared maps $\Sigma: \Delta ^{n}
\to X $. A morphism $f$  from $\Sigma _{1}$ to
$\Sigma _{2}$ is a commutative diagram
\begin{equation} \label{eq:morphismovercategory} 
\begin{tikzcd}
\Delta ^{n} \ar [rd, "\Sigma _{1}" ] \ar[r,
   "f"]& \Delta ^{m} \ar[d, "\Sigma _{2}"] \\
& X,
\end{tikzcd}
\end{equation}
 and top horizontal arrow a
\textbf{\emph{simplicial map}}, also denoted
$f$, that
 is an affine map taking vertices to vertices preserving the order.  We say that $\Sigma: \Delta ^{n} \to X$ is \emph{non-degenerate} 
if it does not fit into a commutative diagram
 \begin{equation*} 
\begin{tikzcd}
\Delta ^{n} \ar [rd, "\Sigma"]  \ar[r] & \Delta ^{m} \ar[d] \\
& X, 
\end{tikzcd}
\end{equation*}
with $m < n$. 

We will denote by $Simp (X)$ the full subcategory
of $\Delta (X)  $, consisting of its non-degenerate objects. The significance   of
$Simp (X) $   is that the perturbation data in the
construction of $F$ (as in Section
\ref{sec:IntroAfunctor}) of the introduction,
must first be constructed in the context of $Simp
(X) $, and then formally extended to all
simplices.  This is necessary to insure
functoriality of $F$ on $\Delta (X) $.

\subsection{Preliminaries on Riemann surfaces}
\label{sec:preliminariesRiemannSurfaces}
Much of this material is adopted from
the book of
Seidel~\cite{citeSeidelFukayacategoriesandPicard-Lefschetztheory}.
Although there are some notation changes, to fit better
with our goals.  Some other notions like the
linear ordering, appearing further on, might be
new, at least in present type of context. 

Let $S'$ be a nodal, connected, simply
connected, Riemann surface, with each
smooth component topologically a disk $D ^{2}$  with some
marked points on the
boundary, indexed by a finite set $I$.   
 Removing the marked points we
obtain a surface $S$ with ends alternatively
called \emph{punctures}. However, it is
sometimes simpler to represent $S$ as the original
compact surface $S'$ with marked points.
The ends/marked points are labeled by $\{e _{i} \} _{i \in I} $. 
The nodal points of $S'$ are denoted by $\{n
_{j} \} _{j \in J} $, again for some index set
$J$, and these are distinct from the set of marked
points $\{e _{i} \} _{i \in I} $. 

For each $j \in J$, we have a
pair $S _{j, \pm} $ of smooth components of $S$, that are topologically 
disks with punctures $$\{e _{i} \} _{i \in I _{j, \pm}}  \subset
 \{e _{i} \} _{i \in I},
 $$ we explain the signs $\pm$ shortly, for now
 they just distinguish the pair of components 
 $S _{j,+}$  and $S _{j,-}$.
More explicitly, $I _{j,+}$  respectively $I _{j,-}$  are just
the subsets of $I$ corresponding to the
punctures on the components $S _{j,+}$
respectively $S _{j,-}$.
If we remove the node $n _{j}$  from $S$ then  $$S _{j, \pm} ^{\circ} := S _{j, \pm} - n _{j}   $$ has an
additional puncture $n _{j, \pm} $ called the
\textbf{\emph{node end}}.

We distinguish one end of $S$ as the root,
to be denoted as $e _{0}$. Using the clockwise
orientation of the boundary of $S$, and if
$\card (I) = d$  we then have
an induced ordering, $e _{0}, \ldots, e _{d}$  of
the punctures.

It is sometimes convenient to depict such Riemann surfaces as rooted
semi-infinite trees, embedded in the plane.
We do this by assigning a vertex
to each smooth component as above, a half infinite edge to each marked point, and an edge to each nodal point, as depicted in Figure  \ref{bubblestotrees}. 
\begin{figure}[h]
 \includegraphics[width=3in]{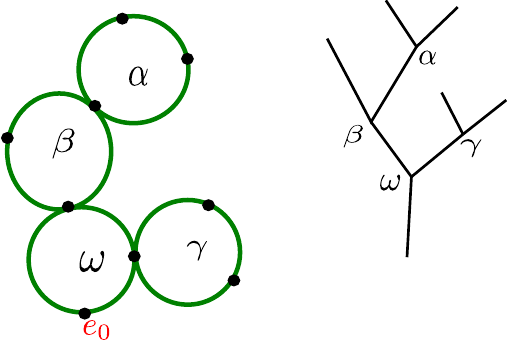}
 \caption {} \label{bubblestotrees}
\end{figure} 
We say that $S$ is \textbf{\emph{stable}} if 
for the associated tree the valency of each vertex is at least 3. 

To make some arguments and notation  cleaner, we also introduce a linear ordering on the smooth components of $S$, or
vertices, by ``order of
operation'' defined as follows.
The component with the root semi-infinite edge $e _{0} $ will be called the root vertex denoted by $\omega$. In terms of the associated tree for the surface we have
a pre-order on vertices given  by the distance to the root vertex, (by giving each edge length
 1). To get an actual order, first
isometrically embed the tree in the plane, while
preserving the clockwise ordering of each
half-infinite edge, corresponding
to the ordering of the punctures.
Then clockwise order vertices equidistant to the root, as in Figure \ref{bubblestotrees}.
We shall denote by ${\alpha}$ the furthermost
component from $\omega$, i.e. it is the greatest
element with respect to our order.  Then ${
   \beta}$ is the next furthermost component, etc. (Pretending that we can't run out of letters.) 
Note, that $\alpha$ may
not be the leftmost component, in fact
``leftmost''
may be ambiguous (dependent on the embedding) for vertices not equidistant to
$\omega$. 
\begin{remark} \label{remark:}
This correspondence of letters to the order 
may seem counterintuitive, but this is motivated
   by idea that these trees are operadic trees
   determining composition. More explicitly, later
   on this is the composition
   in certain $A _{\infty}$ Fukaya categories. Composition
   corresponding to furthermost elements from
   $\omega$   is performed first. Hence $\alpha$
   corresponds to the first operation we need to perform.
   Although the operations corresponding
   to components equidistant
   from $\omega$ can be performed in any order. 
\end{remark}
As part of the data,  we may ask for a holomorphic
diffeomorphism at each $i$'th end, having the name of the end: $$e _{i}:  [0,1] \times (0,
  \infty) \to S,$$ $i \neq 0$. And at the 0'th puncture we ask for a holomorphic diffeomorphism $$e _{0}: [0,1] \times (-\infty, 0) \to S.
  $$  
These charts will be called
  {\emph{strip end charts}}. 

When $S'$ is not nodal, these strip end charts have the property that $$S - ends:= S - \cup
_{i \in \{0, \ldots, d\}} \image (e _{i}) $$ is a compact surface with
corners. 

Let $S _{j, \pm} ^{\circ}, S _{j, \pm}   $ be as above. 
We further specify the $\pm$
 distinction so that  $S _{j,-} > S _{j, + }  $ with respect to the linear order above.
  And we may ask for a similar pair of  strip
charts \begin{align}
   &  e ^{}
  _{j,-}: [0,1] \times (-\infty, 0) \to S ^{\circ}  _{j,-}, \\ 
   &  e ^{} _{j,+}: [0,1] \times (0,
  \infty) \to S ^{\circ}  _{j,+}
 \end{align}  
 at the $n _{j, {\pm} } $ ends. The data of all such strip charts for a given $S$, will be called a \emph{a strip end
structure}.  

The moduli space of the Riemann surfaces as above,
with $\card I=d$, will be denoted by $\overline{\mathcal {R}} _{d}$. 
(Note that Seidel \cite{citeSeidelFukayacategoriesandPicard-Lefschetztheory} calls our $ \overline{\mathcal
  {R}} _{d}$ by $ \overline{\mathcal {R}} _{d+1}$.) 
$\overline{\mathcal {R}} _{d}$ is a real dimension $d-2$  manifold with
corners. We will also denote by $\mathcal{R} _{d} \subset
\overline{ \mathcal{R}} $  the
subspace corresponding to non-nodal surfaces.

For $d \geq 2$ let $\rho': {\mathcal {S}} _{d}
  \to \overline{\mathcal {R}} _{d}$ denote the universal family of the Riemann surfaces
  $S$, as above. 
Denote by $$\rho: {\mathcal {S}} ^{\circ}  _{d}
  \to \overline{\mathcal {R}} _{d},$$ this universal family where the nodal
  points of the surface fibers have been removed.
    \begin{notation}
We denote by $ \mathcal{S} _{d,r} $ and sometimes just by $\mathcal{S}_{r} $ the fiber $\rho ^{-1} (r) $, for $r \in \overline{\mathcal{R}}_{d}$. 
\end{notation}

Choose $r$-smooth (varying smoothly with respect
to  $r$)  families $ \{e _{i,r} \}$, $\{e _{j,\pm,
r} \}$ of  strip end structures for the entire
  universal family $ {\mathcal {S}}_d \to \overline{ \mathcal {R}}
  _{d}$, (note that further on $r$ is suppressed). These choices have to be  consistent with gluing in the natural sense  as explained in \cite[Section 9g]{citeSeidelFukayacategoriesandPicard-Lefschetztheory}. 
We will keep track of these systems of choices of strip end structures only implicitly.
  
\subsubsection{Metric characterization of the moduli
space} 
\label{sec:Metric characterization of the moduli space}  It will be helpful to recall the 
characterization of the moduli space $ {\mathcal
{R}} _{d}$ and its compactification  $
\overline{\mathcal {R}} _{d}$,  
 in terms of hyperbolic metrics   on the punctured
disks $S _{r}$. 
The family $ \{{\mathcal {S}} _{d,r} \}
_{r \in   {\mathcal{R} } _{d}}$ is in a
bijective correspondence with a  suitably
universal family $\{\mathcal {M}et _{r}\} = \{\mathcal {M}et _{d,r } \}$ of constant curvature $-1$
metrics on the disk with $d+1$ punctures on the boundary. Under this correspondence the complex structure on $ \mathcal {S} _{r}$ is just the conformal
structure induced by $ \mathcal {M}et _{r}$. This is of course classical, to see
all this use Schwartz reflection to ``double''
each $\mathcal {S} _{r}$ to a connected genus 0 Riemann surface $\mathcal {D}_{r}$, without boundary. This determines an embedding of $
{\mathcal {R}} _{d}$ into the moduli space $ {M}
_{0,d+1}$ of Riemann surfaces that are
topologically $S ^{2}$  with $d+1$ points removed.  As $d>2$,  by
the uniformization theorem, each $\mathcal {D} _{r}$ is a quotient of the disk by a subgroup of $PSL (2, \mathbb{R})$, which must also preserve the
hyperbolic metric. Therefore, $ \mathcal {S} _{r}$ inherits a hyperbolic
metric, that we call $Met _{r}$. 

The metric point of view gives an illuminating description of the
compactification  $ \overline{\mathcal{R}}
_{d}$, for $d \geq 2$. Starting with some $ \mathcal {S} _{r}$ and taking
$r$ to a boundary stratum, corresponds to 
some fixed collection of embedded,  disjoint
geodesics on $ \mathcal {S} _{r}$, with boundary
in the boundary of $\mathcal{S} _{r} $,   have
their length shrunk to zero. Each boundary stratum of
$\overline{\mathcal{R} } _{d}$ is completely determined by such a collection of
geodesics.
\subsubsection{Gluing} 
\label{sec:Gluing}
The gluing construction
(see for example
\cite{citeSeidelFukayacategoriesandPicard-Lefschetztheory})
takes a surface $\mathcal{S} _{r} $, $
r \in \partial \overline {\mathcal
{R}} _{d}$ and produces a surface with one less node. This gluing  is determined by gluing parameters which we parametrize by $ [0,1)$, assigned to each node. For us $0$ means don't glue, and 1 is meant to correspond to some small value of the gluing parameter used in actual gluing. We will write $d _{\alpha, \beta} $ for the parameter used in the gluing of components $\alpha, \beta $, and likewise with other components.

The gluing construction for parameters in $ [0,1)$ determines an open
neighborhood called \textbf{\emph{gluing neighborhood}}
 of the boundary of $ \overline {\mathcal
{R}} _{d}$.
 A \textbf{\emph{gluing normal neighborhood}} of the boundary of $\overline {\mathcal
{R}} _{d}$: will be an open neighborhood (usually
denoted $N$) of the boundary, deformation retracting to the boundary, contained in the gluing neighborhood.

The gluing construction also induces a kind of thick-thin
decomposition of the surface, with thin parts conformally identified with $
[0,1] \times (0, l)$ for $l>0$     determined by the corresponding gluing
parameter.  This decomposition is not intrinsic,
as it depends in particular on the choice of the
family of strip end structures.
However, instructively these gluing parameters can be
thought of as lengths of geodesic segments, for example $m _{\alpha}$, $m
_{\beta}$ in figure \ref{figuregeodsegm}, and the thin parts are closely related to thin parts of thick-thin decomposition in hyperbolic geometry.
\def\svgwidth{2in}
\begin{figure} 
\centering 
\includegraphics[width=1.5in]{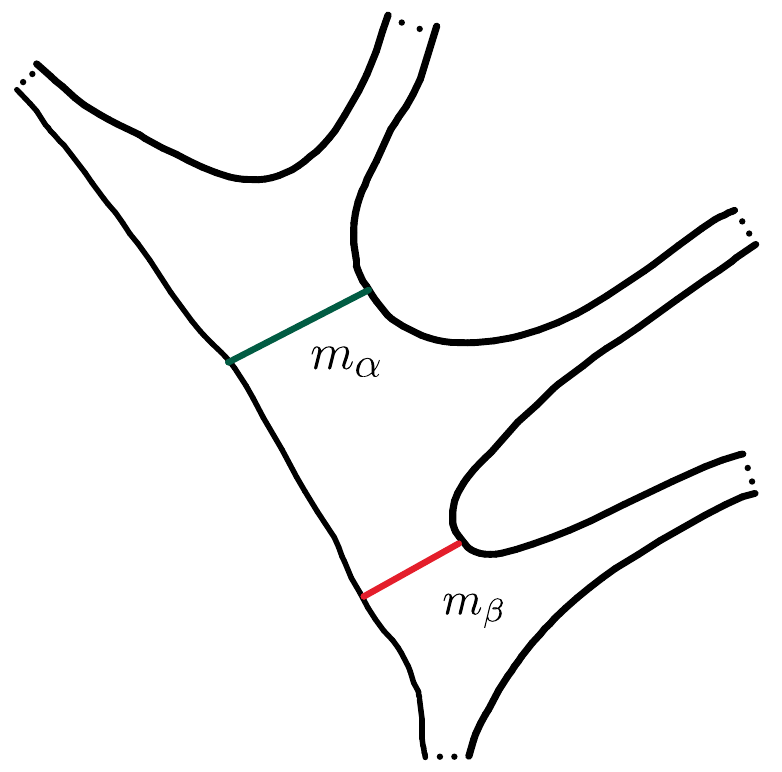}
\caption {This diagram is only schematic. The embedding into the plane is not meant to be holomorphic or isometric for the natural hyperbolic structure on
the surface.}
 \label {figuregeodsegm}
\end{figure} 
\subsection{Hamiltonian fibrations} 
 \label{sec:HamiltonianBundles}  
A \textbf{\emph{Hamiltonian fibration}}  is a smooth fiber bundle $$M \hookrightarrow P \to X,$$
with structure group $\mathcal{G} = \text {Ham}(M, \omega) $
with its $C ^{\infty} $ Frechet topology. A
\textbf{\emph{Hamiltonian connection}}, in this
paper usually denoted by caligraphic letters of
the kind $\mathcal{A} $,  is just
an Ehresmann $\mathcal{G} $-invariant connection for a Hamiltonian
fibration. When $P = M \times X$  is trivial, such
$\mathcal{A} $ 
may be specified as a one form valued in $C
^{\infty} _{norm} (M) $ - the space of  smooth
functions on $M$ with mean 0. 
\section {A system of natural maps from the
universal curve to $\Delta ^{n} $}
\label{sec:systemOfmaps}
We explain here a remarkable connection between the universal curve over
$\overline{\mathcal{R}} _{d}  $ and the standard
topological simplices $\Delta
^{n} $. This will be used in our construction but may be of independent
interest.

Let $\Pi (\Delta ^{n}) $ be the small groupoid, whose
objects set $obj$ is the set of vertices
of $\Delta ^{n}$. The morphisms set $hom$ is
the set of affine maps $m: [0,1] \to \Delta ^{n}$, 
(possibly constant) sending end points to the
vertices. The source map $$s: hom \to obj$$ is defined by
$s (m) = m (0)  $ and the target map $$t: hom \to obj$$ is
defined by $t(m) = m (1)$.  Thus, the set $hom
(v_1, v _{2}) 
  $  of
morphisms between $v_1, v_2 \in \Pi (\Delta ^{n})
$ consists of a single morphism.  It is the unique
affine map $m:[0,1] \to \Delta^{n}  $, satisfying
$m (0)  = v_1$  and $m (1) = v _{2}$. 
Consequently, the composition maps in $\Pi (\Delta
^{n}) $ are forced, by the above uniqueness.   And
the identity at $v$ is the constant $m: [0,1] \to
\Delta^{n} $, $m (0) = m (1) =v$.
\begin{notation}
   \label{} We denote the composition of
   $m _{1}, m _{2}  $ by $m _{1} \cdot m _{2}  $.
   The order is diagrammatic, so that this means
   the composition
 \begin{equation*}
      \cdot \xrightarrow{m _{1}} \cdot
    \xrightarrow{m _{2}} \cdot.
   \end{equation*}
     \end{notation}
We say that $(m_1, \ldots , m _{d} ) $ is a
\emph{composable chain} of morphisms $m_i \in
hom(\Pi (\Delta ^{n} ))$ if $t(m _{i-1}) = s (m_i)
$, for all $2 \leq i \leq d$ .  For future use let $m _{i-1,i} $ denote the unique morphism from the $i-1$ vertex to the $i$ vertex in $\Delta^{n} $.

The goal is to construct a  ``natural'' system of
maps $${u} (m _{1}, \ldots, m _{d}, n):
{\mathcal {S}} ^{\circ}  _{d}  \to \Delta ^{n},$$ $d \geq 2$, for each composable chain $(m_1, \ldots ,m _{d} )$. 
To give a preview, the reason why we work with
$\mathcal{S} ^{\circ }_{d}$ instead of
$\mathcal{S} _{d}$, 
is that certain naturality conditions that we impose,    force
that the node ends (images of charts $e _{j, \pm }$ )  of the surface map  to an edge of
$\Delta ^{n}$. Given continuity, this is of course impossible if the edge is non-constant, unless the node itself is removed.

Let us order the boundary components $0, \ldots, d$ of a surface
$\mathcal{S} _{r}$, $r \in \mathcal{R} _{d}$, as
follows.  The ordering is
clockwise with 0 the component on the
left of the $e _{0}$ end, given that we have chosen the
embedding with $e _{0}$ pointing downward.   See
Figure \ref{fig:sidenumberedsurface}. 
\begin{figure}[h]
  \includegraphics[width=2in]{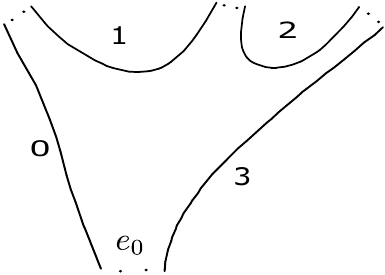}
 \caption {} \label{fig:sidenumberedsurface}
\end{figure}

\begin{definition}\label{def:partialnaturality}
We say that ${u} (m _{1}, \ldots, m _{d}, n):
{\mathcal {S}} ^{\circ}  _{d}  \to \Delta ^{n}$ 
   satisfies \textbf{\emph{partial naturality}}   if
   the following holds.
\begin{enumerate} 
   \item \label{axiom:partial1} The map ${u} (m _{1}, \ldots, m _{d},
      n)$ is continuous and its restriction to
      $$\rho ^{-1} (\mathcal{R} _{d}  \subset
      \overline {\mathcal{R}}_{d})  $$ is smooth.
\item \label{axiom:partial2} Let $ 
{u} (m _{1}, \ldots, m _{d}, n,r)$ denote the restriction of  ${u} (m _{1}, \ldots, m _{d},
n)$ to $ \mathcal {S} _{r}$, and let $$m_0:=m_1 \cdot
\ldots \cdot m_d$$ be the composition in $\Pi (\Delta ^{n}) $.
Then given the strip end charts $e _{k}:
[0,1] \times (0, \infty) \to \mathcal {S}
_{r}$ at each $e _{k}$ end,  $1 \leq k \leq d$, 
$$e _{k} \circ {u} (m _{1}, \ldots, m _{d}, n,r)=
      m _{k} \circ pr$$ for $pr: [0,1] \times (0, \infty) \to
      [0,1]  $  the projection.
\item \label{axiom:partial3} Likewise, in the strip
   end coordinates $e _{k}:
   [0,1] \times (- \infty,   0) \to \mathcal {S}
      _{r}$ at the end $e _{0}$ of $\mathcal{S}
      _{r}$, 
${u} (m _{1}, \ldots, m _{d}, n,r)$ has the form of the projection to $
[0,1]$ composed with $m _{0}$.  
\item \label{propertie4} For $0 \leq k \leq d-1$, $r
   \in \mathcal{R} _{d}$, the $k$'th component  of
      $\partial \mathcal{S} _{r} $ is mapped to $s
      (m _{k})$, and the $d$'th component is mapped  by ${u} (m _{1}, \ldots, m _{d}, n,r)$ to $t (m_d)$. 
\end{enumerate} 
\end {definition}    
See Figure \ref{fig:naturality} for an example
of a map satisfying partial naturality.
\begin{figure}[h]
  \includegraphics[width=3in]{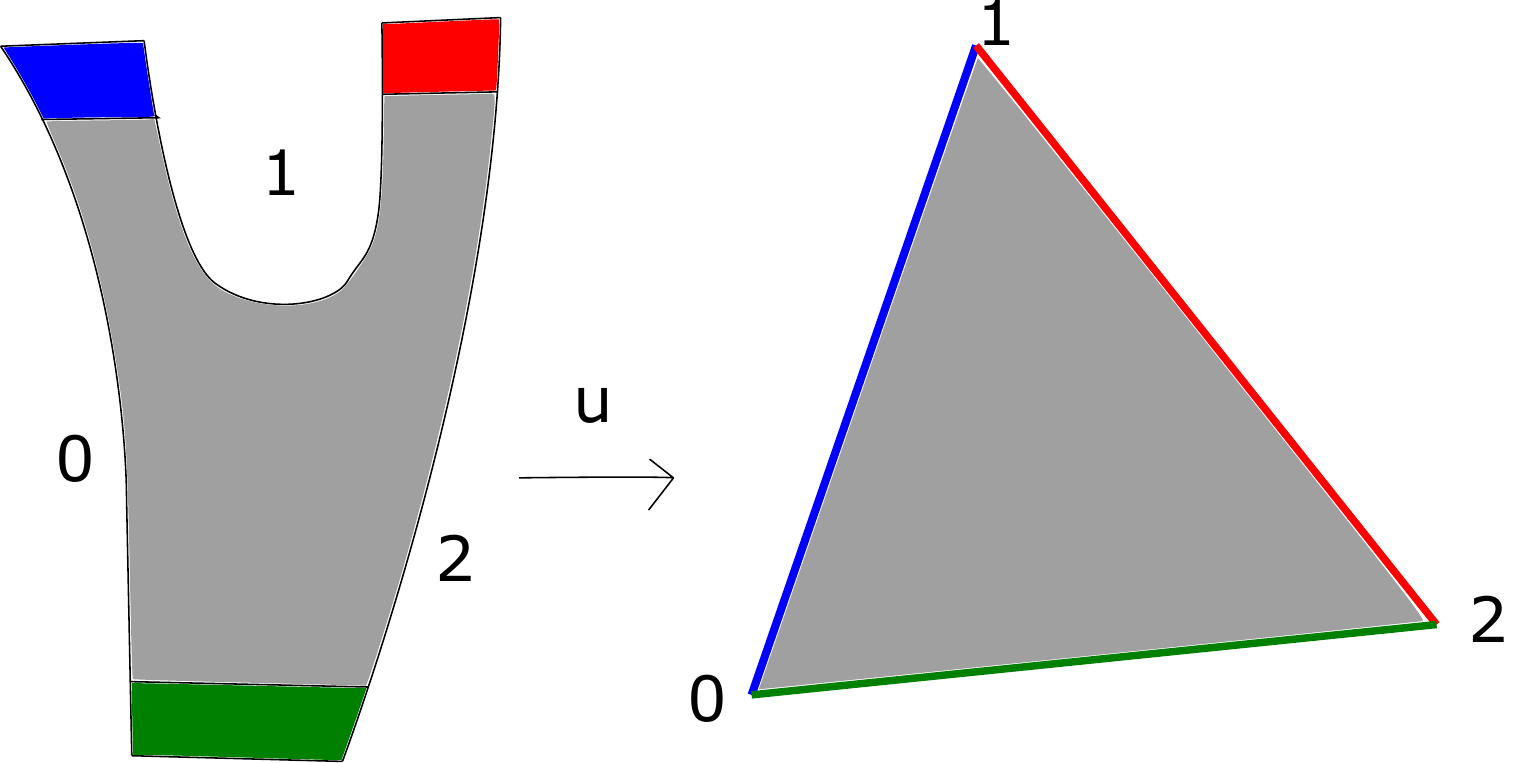}
 \caption {Figure for a map $u (m _{0,1}, m _{1,2},2)
$ satisfying partial naturality. The edges of the
   surface labeled $0,1,2$ are mapped to the
   vertices $0,1,2$ respectively. Colored ends are
   collapsed onto correspondingly colored edges.} 
   \label{fig:naturality}
\end{figure} 
So far we haven't put any special conditions for
$r \in \partial \overline{\mathcal{R} } _{d}$,
having to do with nodes.
These conditions will be forced by certain
additional naturality axioms,  which arise from
various gluing conditions.    After imposing these
additional axioms, Theorems
\ref{lemmanaturalmaps}, \ref{thm:naturaltargetdependent} which will follow, state that such a natural system of maps exists and is
unique up to suitable concordance.

We start by explaining the natural gluing
operations that appear.   
\begin{notation}
  \label{} In what follows we may
   interchangeably use the notation $\mathcal{S}
^{\circ } (d), \mathcal{S} (d)  $ for $\mathcal{S}
^{\circ}_{d}, \mathcal{S} _{d}$. This is to
avoid notation clash with various notations for
fibers. 
\end{notation}

First denote by $ \mathcal {T} (m _{1},
\ldots, m _{d}, n)$  the space of maps satisfying
the partial naturality properties above. We have the natural gluing map 
\begin{equation} \label {eq.composition}  St _{i}: \overline{\mathcal {R}}
_{s_1} \times \overline {\mathcal {R}} _{s_2} \times [0,1) \to
\overline{\mathcal {R}} _{s_1+s_2-1},
\end{equation}
whose value on $(r, r', \tau)$ is given by gluing
the surfaces $\mathcal{S} _{r}, \mathcal{S} _{r'} $ at the
root of $\mathcal{S} _{r}$  and at the $i$'th
marked point of $\mathcal{S} _{r'}$, with gluing parameter
$\tau \in [0,1)$, and then associating to this
glued surface its isomorphism class in $ \overline{\mathcal {R}} _{s_1+s_2-1}$.
 (When the value of the gluing parameter is 0, this is the composition map in the
 Stasheff topological $A _{\infty} $ operad).

Given an element $u \in \mathcal {T} (m _{1},
\ldots, m _{s_1}, n)$ and an element $$u' \in \mathcal {T} (m'_1, \ldots, m'
_{i-1}, m_1 \cdot \ldots \cdot m _{s_1}=m' _{i} ,
m' _{i+1}, \ldots, m' _{s_2}, n),$$ we have a
naturally induced map $$(u \star _{i}  u')_0: { \mathcal
{S}} ^{\circ} 
({s_1, s _{2}, 0}) \to  \Delta ^{n},$$ where $${ \mathcal
{S}} ^{\circ} 
({s_1, s _{2}, 0}) \to
\overline{\mathcal {R}} _{s_1} \times \overline{\mathcal {R}} _{s_2} $$ is the 
pullback of the fibration $${\mathcal{S}} ^{\circ}
({d = s _{1} + s _{2}-1  }) \to
\overline{\mathcal {R}} _{s_1+s_2-1}$$ by $St _{i}| _{\overline{\mathcal {R}}
_{s_1} \times \overline {\mathcal {R}} _{s_2} \times \{0\}} 
$. More specifically, the fiber $\mathcal{S} _{r
_{1}, r _{2}}$  of ${
   \mathcal
{S}} ^{\circ} 
({s_1, s _{2}, 0})$ over $(r
_{1}, r _{2})$ is the disjoint union of a pair of
distinguished (possibly disconnected) subspaces
${S} _{1}, {S} _{2}$ 
naturally identified with $\mathcal{S} _{ r _{1} } (s _{1})  $,
respectively
$\mathcal{S} _{r _{2} } (s _{2}) $.  
So under this identification, we apply $u$ to
${S} _{1}$  and
$u'$ to ${S} _{2}$, and this is the map $(u \star
_{i}  u')_0 | _{\mathcal{S} _{r _{1}, r _{2}}}$, 
see Figure  \ref{fig:splitmap}.
\begin{figure}[h]
  \includegraphics[width=4in]{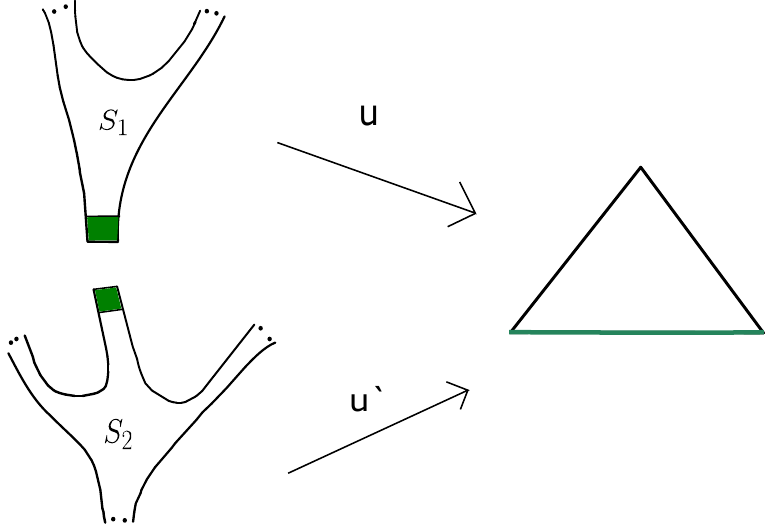}
 \caption {The green shaded ends are collapsed
   onto the same edge of $\Delta^{n} $.} \label{fig:splitmap}
\end{figure}

We can extend $(u \star _{i}  u')_0$ to a map
$$(u  \star _{i} 
u') _{1}:  { \mathcal {S}}^{\circ} ({s_1,s _{2}, 1})   \to \Delta ^{n},$$ where 
$${\mathcal {S}}  ^{\circ} ({s_1,s _{2}, 1}) \to \overline{\mathcal {R}} _{s_1} \times \overline{\mathcal {R}}
_{s_2} \times [0, 1) $$ is the pullback of the
family  
$${\mathcal{S}} ^{\circ}
({d = s _{1} + s _{2}-1  }) \to
\overline{\mathcal {R}} _{s_1+s_2-1}$$ by $St _{i}| _{\overline{\mathcal {R}} _{s_1} \times \overline{\mathcal {R}}
_{s_2} \times [0, 1)} 
$. 

To get this extension we specify  
\begin{equation}
   \label{eq:ustaru'taur}
   (u \star _{i}  u') _{r,r', \tau}: = (u \star _{i}  u') _{1}| _{\mathcal{S}
_{r,r',\tau} }, 
\end{equation}
for $\mathcal{S} _{r,r',\tau} $  the fiber of ${
   \mathcal {S}} ^{\circ} ({s_1,s _{2}, 1})$ over
$$(r, r', \tau) \in \overline{\mathcal {R}} _{s_1} \times \overline{\mathcal {R}}
_{s_2} \times [0, 1), \quad \tau   \in (0,1).
$$ Recall that the surface $\mathcal{S}_{r,r',\tau} $ glued from $\mathcal{S}_{r}, \mathcal{S} _{r'}  $ has a subdomain which we denote by 
$thin=thin _{\tau,i} $ that has a determined conformal identification with a strip of the form $[0,1] \times
(-\phi(\tau), \phi(\tau))$, for some continuous function (not
explicitly needed)  
\begin{equation}
   \label{eq:phiearly}
  \phi: (0,1] \to (0,\infty),    
\end{equation} 
s.t. $\lim _{\tau \mapsto 0} \phi (\tau) = \infty$.
This function is determined by
the particular parametrization of the gluing operation. 

Now $\mathcal{S}
_{r,r'} - thin $  is the disjoint union of 
subspaces holomorphically identified with
the regions $$Reg _{r} \subset
\mathcal{S} _{r}, Reg _{r'} \subset \mathcal{S}
_{r'},  $$ so that $Reg _{r} $ is identified with
$\mathcal{S} _{r} - \image {e _{0}} $, where $e
_{0}$ is the strip end chart for the root end of $\mathcal{S} _{r}$. And likewise, so that $Reg _{r'} $ is
identified $\mathcal{S} _{r'} - \image e _{i}$,
for $e _{i}$ the strip end chart for the $i$'th end of
$\mathcal{S} _{r'}$. 

We then define $(u \star _{i}  u') _{r,r', \tau}$ to coincide
with $u$, $u'$ on $Reg _{r} $ respectively $Reg _{r'} $, while on $thin$
in the distinguished coordinates $$[0,1] \times (-\phi (\tau),
\phi(\tau)),$$ $(u \star _{i}  u') _{r,r', \tau}$ is the
map given by the projection $$[0,1] \times (-\phi (\tau), \phi(\tau)) \to [0,1]$$ followed by 
the map $m' _{i} $, similarly to the Axiom
\ref{axiom:partial2} of partial naturality.  This operation is well-defined
for all $\tau \in (0,1) $  and so determines the
needed extension.  

In order to state the naturality axioms we need more geometry.
Let $N$ be a gluing normal neighborhood of the
boundary $\partial \overline{\mathcal{R}} _{d} $,
$d \geq 3$.    Recall that $\mathcal{R} _{d}$ has
dimension $d-2$. Let $S ^{d-3} _{0} \subset N  $ be an embedded sphere of dimension $d-3$, not intersecting  $\partial
\overline{\mathcal{R}} _{d} $, homotopic to the
inclusion $(\partial
\overline{\mathcal{R}} _{d} \simeq S ^{d-3}) \to N $. (A $0$-dimensional
sphere is understood throughout as a pair of points.) 
  Let $R ^{d-2} 
_{0} $ be the dimension $d-2$ ball subdomain of
$\overline{\mathcal{R}} _{d}$ bounded by $S ^{d-3}
_{0}  $.  Finally, let $\mathcal{S} _{r} - ends $ denote the compact Riemann surface with
boundary obtained from $\mathcal{S} _{r} $ by
removing the ends. Specifically, for $1 \leq i
\leq d$   we remove the  
images of the charts $e _{i}: {[0,1] \times (0,
\infty)} \to \mathcal{S} _{r}   $, and we remove the image of the chart $e
_{0}: [0,1] \times (-\infty,
0) \to \mathcal{S} _{r}$.   

For $r \notin
\partial \overline{\mathcal{R}}_{d}  $ set $D
^{2}_r: = \mathcal{S} _{r} - ends \simeq D ^{2}$.
And let $$G
^{2}  _{r} := (D ^{2} _{r},
\partial D ^{2}_{r}) \simeq (D ^{2}, \partial D
^{2}) $$ denote the
pair, where $\simeq$ is homeomorphism.

So we get a fiber
bundle of pairs $$(D ^{2}, \partial D ^{2})
\hookrightarrow (\widetilde{F}, A)  \to R ^{d-2} _{0},
$$ with fiber over $r$: $G ^{2} _{r}  $. And where
$A$ is just the corresponding sub-bundle with
fiber over $r$: $\partial D ^{2}_{r} $.  

We are almost ready to state our axioms.
Let $C _{s} (\Delta ^{n})$ denote the set of composable chains $(m_1, \ldots , m_s)$ of length
$s \geq 2$ in $\Pi (\Delta ^{n} )$. 
\begin{definition}\label{def:minimaldimension}
   For $(m _{1}, \cdots, m _{s})  \in C _{s} (\Delta ^{n}) $ let $D (m_1, \ldots, m
_{s})$ denote the minimal dimension of a subsimplex of $\Delta ^{n}  $
      which contains the edges corresponding to the morphisms $m _{i} $.
\end{definition}
 A system of maps $\mathcal{U}$ is an element of:
\begin{equation} \label{eq:productU}
\prod _{n \in \mathbb{N}} \prod _{s \in \mathbb{N}
   _{\geq 2} } \prod
_{(m_1, \ldots , m_s) \in C _{s} (\Delta ^{n} )}
\mathcal{T} (m_1, \ldots m_s, n).
\end{equation}
Given a system $\mathcal{U}$ its projection onto $(n, s, (m_1, \ldots , m_s))$
component will be denoted by $u (m_1, \ldots ,
m_s, n)$.   To put this another way, let $$B= \{(m_1, \ldots
m _{s}, n) | s \in
\mathbb{N} _{\geq 2}, n \in \mathbb{N},   (m
_{1}, \ldots, m _{s}) \in   C _{s} (\Delta^{n} )    \},$$
then set theoretically the product
\eqref{eq:productU} above is the set of certain
set maps $\mathcal{U}$ with domain $B$.  Then in
this language $u (m_1, \ldots , 
m_s, n) = \mathcal{U} (m_1, \ldots ,
m_s, n). $ 
\begin{definition} \label{defNaturality}
We say that $\mathcal{U}$ 
is \textbf{\emph{natural}} if it satisfies the following axioms:
\begin{enumerate}
  \item  \label{axiom:1} For all $s_1, s _{2}$ and for all $i$ if $m _{i}' = m _{1} \cdot \ldots \cdot m _{s _{1} }   $ then the map 
  \begin{equation} \label {eq.delta1} 
     ({u} (m _{1},
\ldots, m _{s _{1} }, n) \star _{i} u (m'_1,
     \ldots, m'_{s_2}, n)) _{1}
\end{equation}
coincides with the composition   
\begin{equation} \label {eq.delta2}
   {\mathcal {S}} ^{\circ}_{s_1, s _{2},
   1}
\xrightarrow{St _{i,*}} {\mathcal {S}}
   _{s_1+s_2-1} ^{\circ} \xrightarrow{u (m'_1,
\ldots, m' _{i-1}, m _{1},
\ldots, m _{s _{1} }, m _{i+1}, \ldots,   m'_{s_2}, n)} \Delta
^{n},
\end{equation}
for $St _{i,*}$ the bundle map induced by $St _{i} $. ($St
_{i,*}$ is the natural map in the corresponding pull-back square.) 
\item \label{axiom:naturality2}   Let $f: \Delta ^{n} \to \Delta ^{m}$  be a  simplicial map, which recall is an affine map preserving the orders of the vertices. Then there is an induced functor 
      $$f: \Pi
      (\Delta ^{n} ) \to \Pi (\Delta ^{m})
      $$ 
and  we ask that
\begin{equation*}
f \circ u(m_1, \ldots, m_s, n) = u (f(m_1),
   \ldots, f (m_s), m). 
\end{equation*}
\item \label{axiom:naturality3} 
  Let $(m _{1}, \ldots, m _{d}) \in C _{d}
  (\Delta^{d} ) $ for $d \geq 3$.    Suppose that $D (m_1, \ldots, m _{d} ) = d$ then
by the Lemma \ref{lemma:inducedmapofpairs} below, $u (m _{1}, \ldots, m _{d}, d)$ induces a map of pairs:
\begin{equation}  \label{eq:widetildeu}
\widetilde{u}:  
   (\widetilde{F}, A  \sqcup \widetilde{F} | _{S _{0}
   ^{d-3}})   \to (\Delta ^{d}, \partial \Delta
   ^{d}),
\end{equation}  
and we ask that $\widetilde{u} $ be a homological degree 1 map.
\end{enumerate}
\end{definition}   

\begin{remark}
   \label{remark:}
Note that when $d=2$ we may still state an analogue of Axiom \ref{axiom:naturality3}, but it would be immediate from the partial naturality properties of Definition
   \ref{def:partialnaturality}.
\end{remark}
\begin{lemma}
\label{lemma:inducedmapofpairs} 
   Let
$\mathcal{U} $ satisfy the first pair of axioms
in the Definition \ref{defNaturality}.
For $(m _{1}, \cdots, m _{d})  \in C _{s} (\Delta
   ^{d} ) $, $d \geq 3$,  
suppose that $D (m_1, \ldots, m _{d}) = d$. Then 
$\widetilde{u}:=u (m _{1}, \ldots, m _{d}, d)$ is
a map of pairs:
\begin{equation*} 
\widetilde{u}:  
   (\widetilde{F}, A  \sqcup \widetilde{F} | _{S _{0}
   ^{d-3}})   \to (\Delta ^{d}, \partial \Delta
   ^{d}).
\end{equation*}  
\end{lemma}
\begin{proof} 
   The map $\widetilde{u} $ maps $A$ 
   to $\partial \Delta^{d} $ by the partial
   naturality properties 2,3 and 4 in Definition
   \ref{def:partialnaturality}. In fact $A$ is
   mapped to the edges of $\Delta^{d} $.  
   
   Since
   $S _{0} ^{d-3}$ is contained $N$, by Axiom
   \ref{axiom:1}  for each $r \in S _{0}
   ^{d-3}$, ${u} _{r} = u (m _{1}, \ldots, m
   _{d},n,r)   $  has the form
   $(u _{1} \star _{i} u
   _{2}) _{\tau, r _{1}, r _{2}} $ for some $i$,
   for some $$u _{1} \in
   \mathcal {T} (m _{1}, \ldots, m _{s _{1}}, d)$$
   for some $$u _{2} \in \mathcal {T} (m'_1,
   \ldots, m' _{i-1}, m_1 \cdot \ldots \cdot m
   _{s_1}=m' _{i} , m' _{i+1}, \ldots, m' _{s_2},
   d),$$ and some $\tau, r _{1}, r _{2}$.
   
   Now 
   $s _{1} <d$ and $s _{2} <d$    since
   $s_1+s_2-1=d$, and since $s _{1}, s _{2} \geq
   2$. It then follows by this and by Axiom
   \ref{axiom:naturality2} that the images of $u _{1}$  and
	 $ u_{2}$  are contained in  proper
   faces of $\Delta^{d} $. Consequently, the image of
   $\widetilde{u}| _{S _{0} ^{d-3}}  $ is
   contained in $\partial \Delta^{d} $    and so
   we are done. 
\end{proof}
\begin{theorem} \label{lemmanaturalmaps}
   A natural system $\mathcal{U}$ exists. 
\end{theorem}
We first give a not explicitly constructive proof in all generality, and afterwards describe a partial explicit construction.
\begin{proof}  [Proof of \ref{lemmanaturalmaps}]
 To construct our maps $u (m_1, \ldots, m_s,n)$ we will proceed by induction. 
When $n=0$ there is nothing to do, as we have unique
maps for all $s \geq 2$, and they trivially
satisfy Axioms 1,2,3.  However, we will have to be
careful with the basis of the induction.

Given a composable sequence $(m _{1}, \ldots, m _{s}  )$ of morphisms in $\Pi (\Delta
^{n}) $, for some $n$, let $D (m_1, \ldots, m_s)$,
called the  \emph{$D$-number}, 
denote the least dimension of a non-degenerate
   subsimplex of $\Delta ^{n} $ that contains
the edges corresponding to $\{m _{i} \}$. Let $S
   (N) $ be the following statement: 
there are maps
\begin{equation} \label{eqInductionMaps}
u (m_1, \ldots, m_s, n),
\end{equation} for all $s \geq 2$ and all $n \leq
N$ and every composable chain $ (m_1, \ldots, m_s)$ with $D (m_1, \ldots, m_s) = s \leq n$ such that  axioms 1,3 are satisfied and such that 
\begin{equation}
	\label{eq:axiom2sigma}
	\sigma \circ u (m_1, \ldots, m _{s}, n) = u (\sigma (m_1), \ldots, \sigma(m _{s}), n)
\end{equation}
for $$\sigma: \Delta ^{n} \to \Delta ^{m}  $$
an injective simplicial map. That is we satisfy
Axiom \ref{axiom:naturality2} only partially and only for restricted $(m_1, \ldots,
m _{s} )$ at the moment.  

We will also denote by
$S (N) $ a corresponding collection of maps
\eqref{eqInductionMaps} with the requisite property.
 It can be seen directly that $S (N) $ holds for
   $N=0,1,2,3$. $N=0$ is the trivial case already
   considered above.   Indeed,   when $N=4$
   such a construction is given in the following
   Section \ref{sec:Outline}, and this also implies the
   cases of $N=1,2,3$. 
We intend to prove:
\begin{equation*}
   (N \geq 3) \land S (N) \implies S (N+1),
\end{equation*}
moreover the collection of maps $S (N+1) $  can be
chosen to extend  the collection of maps $S (N) $.

Note that the condition \ref{eq:axiom2sigma} and the maps \eqref{eqInductionMaps}
uniquely determine 
\begin{equation} \label{eqMapsInduced}
 u (m_1, \ldots, m_s, N+1)
\end{equation} 
 for $ \forall  s \geq 2, \forall  (m_1,
 \ldots, m_s)$ with  $D (m_1, \ldots, m_s) =s \leq N$.
We need an extension in the case $D (m_1, \ldots, m_s) = s = N+1$. 

By assumption $N+1>3$.  Let then $(m ^{0} _{1}, \ldots, m _{N +1} ^{0}    )$, be a chosen
   composable sequence with $D (m ^{0}_1, \ldots, m ^{0} _{N+1}  ) = N+1$.
Then gluing as in the Axiom \ref{axiom:1} of naturality and the maps \eqref{eqMapsInduced} naturally determine a map
\begin{equation} \label{eq:subs}
   {u}= {u} (m ^{0} _1, \ldots,
m ^{0}  _{N+1} , N+1): Sub _{N+1}   \to \Delta ^{N+1},
\end{equation}
 where $Sub _{N+1} = \rho ^{-1} (\partial \overline{\mathcal{R}} _{N+1} )  $, and where as before
 $$\rho: { \mathcal {S}} _{N+1} ^{\circ}  \to
 \overline{\mathcal{R}} _{N+1}.  $$ 

Let $U$  be a gluing normal neighborhood of
$\mathcal{R} _{N+1}$. (We previously used the letter
$N$, but just for this proof we use   the letter
$U$.)  
Extend ${u}$ in any way to $\rho ^{-1} (U) $, so
that Axiom \ref{axiom:1} of naturality
is satisfied.    We then need to further extend ${u}$ to ${ \mathcal {S}} _{N+1} ^{\circ} $ so that
 Axiom \ref{axiom:naturality3} of naturality is satisfied.  

Let $S ^{N-2} _{0}  \subset U $
be an embedded sphere in $U$ not intersecting
$\partial \overline{\mathcal{R}} _{N+1} $,
homotopic to the inclusion $\partial
\overline{\mathcal{R}} _{N+1} \to U$.
Then $u$ induces a map
of a pair
\begin{equation} 
   g: (\widetilde{F} | _{S _{0}
   ^{N-2}}, 
A  | _{S _{0}
   ^{N-2}})   \to (\partial \Delta ^{N+1} \simeq S ^{N}, loop ),
\end{equation}
where $loop$ is a topologically embedded $S ^{1} $ in $\partial \Delta ^{N+1} $
that is the image of the loop
 $$\gamma =m ^{0}  _{1} \cdot \ldots \cdot m ^{0} _{N+1} \cdot m  _{s
(m ^{0} _1), t (m ^{0} _{N+1})} ^{-1},$$ where $\cdot$ is concatenation of paths and the
order of composition is diagrammatic.
 The map $g$ is just the restriction of the map
 $\widetilde{u} $,  
\eqref{eq:widetildeu}.
\begin{lemma} The map $g$ is homological degree 1.  \end{lemma}
\begin{proof}
As $N > 2$ $loop$ has codimension greater than 1, so that the meaning of
homological degree is unambiguous, as the pair $(S ^{N}, loop) $ has a well-defined fundamental class by the homology long exact sequence for a pair.
Moreover, approximating $g$ by a smooth map we may compute the homological
degree via the smooth
degree, (denote the approximation still by $g$).
   That is let $f$ be a $N$-face of $\Delta ^{N+1} $ and $p \in interior (f)$ a
regular image point of $g$. The homological degree of $g$ is
then the count of elements of $g
^{-1} (p) $ with signs given by whether $dg _{k} $, $k \in g ^{-1} (p) $, is
orientation preserving or reversing.

   Suppose without loss of generality that the vertices of $f$ are $0, \ldots
,N$.  As the degree of $g$ is clearly independent of the choice of $S _{0} ^{N-2}  $ we may assume that $S _{0} ^{N-2}$  is chosen so that for some $\epsilon > 0$:
\begin{equation} \label{eq:ST1}   St _{1}: \left(R _{0} ^{N-2} \subset  \overline{\mathcal {R}}
_{N} \right) \times \overline {\mathcal {R}} _{2} \times \{\epsilon\} \to
\overline{\mathcal {R}} _{N+1}
\end{equation}
is an embedding into $S _{0} ^{N-2}  $. Note that   the map
   $$St _{1}: \left(R _{0} ^{N-2} \subset  \overline{\mathcal {R}}
_{N} \right) \times \overline {\mathcal {R}} _{2}
   \times \{0\} \to
\overline{\mathcal {R}} _{N+1}$$ may be understood
as a ``face map'' for the polyhedral model of
   $\mathcal{R} _{N+1}$.  
We may in addition suppose that $S _{0} ^{N-2} -T  $ is covered by such embeddings corresponding to the various other ``faces'' of $\overline{\mathcal{R}} _{N+1}  $, where the region $T \subset S _{0} ^{N-2} $, is such that $g$ maps $\rho ^{-1} (T)$ into the union of $(N-1)$-faces of $\Delta ^{N+1} $. The image of the map \eqref{eq:ST1} will be denoted by $V$.

   Then by the naturality Axiom \ref{axiom:naturality3} the face $f$ is covered by the image of $$\kappa=({u} (m ^{0}  _{1},
\ldots, m ^{0}  _{N}, N+1) \star _{1}  u (m ^{0} _1 \cdot
 \ldots \cdot m ^{0} _{N}, m ^{0} _{N+1}, N+1))  _{1}   \vert _{\widetilde{V}},$$  where $$\widetilde{V}  = \rho ^{-1} (V). $$ 
 
By construction, the smooth
degree of $g \vert _{\widetilde{V}}$ is the smooth
   degree of $\kappa $. But then, by the naturality
   Axiom \ref{axiom:naturality3}, $\kappa$ has smooth degree
one. And again, by naturality and the assumption on the form of  $S _{0} ^{N-2}
$, as described above, no other point of $\widetilde{F} | _{S _{0}
   ^{N-2}}   $ is in $g ^{-1} (p) $. It follows that $g$ is smooth degree one and so is homological degree one.

\end{proof}
\begin{lemma}
   \label{lemma:degree1}
  There is a degree one extension: 
\begin{equation*} 
\widetilde{g}:  (\widetilde{F}, A  \sqcup \widetilde{F} | _{S _{0}
   ^{N-2}})   \to (\Delta ^{N+1}, \partial \Delta ^{N+1}),
\end{equation*}
with $\widetilde{g} (A) = loop$.
\end{lemma}
\begin{proof} This is elementary topology so that we will not give exhaustive detail.
 Note that $\widetilde{F} \simeq B ^{N-1} \times D
^{2}$, where $B ^{N-1}$ denotes the unit ball in
   $\mathbb{R} ^{N-1} $. So to find the necessary degree one extension  it is
enough to show that given a degree one map 
   $h: (S ^{d-2} \times D
   ^{2}, S ^{d-2} \times \partial D ^{2}) \to  (S
   ^{d}, loop)$, there is a degree one extension
$$\widetilde{h}: (B ^{d-1} \times D
   ^{2}, \partial B ^{d-1} \times  D ^{2}) \to  (B
   ^{d+1}, \partial B ^{d+1}).$$ 
First note that a degree one map 
   $$\widetilde{t}:  (B ^{d-1} \times D
   ^{2}, \partial B ^{d-1} \times  D ^{2}) \to  (B
   ^{d+1}, \partial B ^{d+1}),$$ 
 exists by an elementary topology construction.
   (Start with the slicing diffeomorphism $[0,1]
   ^{d-1} \times [0,1] ^{2} \to [0,1] ^{d+1} $.) 
   We may in addition ensure that the restriction
   of $\widetilde{t} $ to the boundary $S ^{d-2} \times D
   ^{2}$ 
   is a map of a pair
   $$t: (S ^{d-2} \times D
   ^{2}, S ^{d-2} \times \partial D ^{2}) \to  (S
   ^{d}, loop). $$ 
 We may now apply  
   an analogue of the classical theorem of Hopf, which says that
   homotopy classes of maps of closed $d$-manifolds to $S
   ^{d}$ are classified by degree. (We say
   analogue because we are working with maps of pairs.) In our case
   Hopf's theorem readily implies that $t$ is
   homotopic to $h$ through maps of pairs. 
   Then use classical homotopy extension theorem,
   to get a homotopy from $\widetilde{t} $ to the
   needed map $\widetilde{h} $, extending $h$.
   \end{proof}

Continuing with the proof of the theorem, given  $\widetilde{g} $ as in the lemma above,  we may readily construct our extension $u:
 {\mathcal {S}} _{N+1} ^{\circ} \to \Delta ^{N+1} $ so
that $$u \in \mathcal {T} (m ^{0}  _{1},
\ldots, m ^{0} _{N+1}, N+1)$$ and so that the last naturality
axiom is satisfied. 

Now given any other composable sequence $(m_1,
\ldots, m _{N+1} )$ with $$D (m_1, \ldots, m
_{N+1})=N+1$$ let $$\sigma: \Delta ^{N+1} \to
\Delta ^{N+1}  $$ be the unique simplicial
bijective map taking $(m_1, \ldots, m _{N+1} )$ to $(m ^{0} _1,
\ldots, m ^{0}  _{N+1} )$, (it is unique because the action is determined by the
action on the vertices. 
) And we define
\begin{equation*}
u (m_1, \ldots, m _{N+1}, N+1): { \mathcal {S}} _{N+1} ^{\circ} \to
   \Delta ^{N+1}
\end{equation*}
by
$$u (m_1, \ldots, m _{N+1}, N+1 ) = \sigma ^{-1}  \circ u.
$$ 

We then obtain a collection of maps,
$$\{u
(m_1, \ldots, m _{s'}, n)\} _{(m _{1}, \ldots, m
_{s'} ), n \leq N +1, D (m _{1}, \ldots, m _{s'})
=  s' \leq n}, $$
and these maps satisfy the condition of
the statement $S (N+1)
$ by   the
construction and the inductive hypothesis.  We
thus complete the inductive step. Moreover, by
construction the
collection of maps $S (N+1) $  extends the
collection $S (N) $. 

By recursion, we may then define a sequence of
systems $\{S _{N}\} _{N \geq 0}$,  so that $S (N+1)  $
extends $S (N) $, for each $N$. 
We then obtain the total
collection:
\begin{align*}
    \mathcal{U}' =\bigcup _{N} S (N) =   \{u
(m_1, \ldots, m _{s'}, n) | n \in \mathbb{N},  (m _{1}, \ldots, m
   _{s'} ) \in C _{s'} (\Delta^{n} ), \\  \text{ such
   that } D (m _{1}, \ldots, m _{s'}) =  s'\}.
\end{align*}

It remains to extend the partial system
$\mathcal{U}' $ above to a full natural
system $\mathcal{U} $, that is we need to remove restrictions on the $D$-number. Given $(m _{1}, \ldots, m _{s}  )$, a composable sequence in $\Pi (\Delta ^{n} )$ with $s > n$, we may write $m _{i} = pr \circ
\widetilde{m}_{i} $ for $(\widetilde{m}_{1}, \ldots, \widetilde{m} _{s}    )$
a composable sequence in $\Delta ^{s} $ s.t. $D(\widetilde{m}_{1}, \ldots,
\widetilde{m} _{s}    )= s$, for $pr: \Delta ^{s} \to \Delta ^{n}  $ surjective
simplicial map. 
We then define $$u (m_1, \ldots, m _{s},n ) := pr \circ u (\widetilde{m}_{1}, \ldots, \widetilde{m} _{s},s  ).
$$  We should check that the above is well-defined.
Let $(\widetilde{m}'_{1}, \ldots, \widetilde{m}'
_{s})$ be another choice of a composable sequence
with $pr (\widetilde{m}' _{i}  ) = m _{i} $.
There is a unique simplicial bijective map $\sigma: \Delta ^{s} \to
\Delta ^{s}  $ fixing the image $i (\Delta ^{n}
)$, for $i: \Delta ^{n} \to \Delta ^{s}  $
inclusion of face, s.t. $pr \circ i = id$, and
s.t.  $\sigma (\widetilde{m} _{i}  ) =
\widetilde{m}' _{i}  $.

Then we have 
\begin{equation*} pr \circ u (\widetilde{m}'_{1}, \ldots, \widetilde{m}' _{s},s  ) =  pr \circ \sigma \circ u (\widetilde{m}_{1}, \ldots, \widetilde{m} _{s},s  ).
\end{equation*}
But $pr \circ \sigma = pr $ since $\sigma$ fixes $i (\Delta ^{n} )$. 
So we obtain that
\begin{equation*}
pr \circ u (\widetilde{m}'_{1}, \ldots, \widetilde{m}' _{s},s  ) = pr \circ u (\widetilde{m}_{1}, \ldots, \widetilde{m} _{s},s  ), 
\end{equation*}
so that $u (m_1, \ldots, m _{s},n )$ is well-defined.
So we have constructed our  system of maps satisfying all the axioms of naturality.
\end{proof} 
\subsection{Target dependent natural systems} 
Our natural systems $\mathcal{U}$ can be made
dependent on particular simplices
$\Sigma: \Delta ^{d} \to X$.  This is useful
for proving invariance later on. More
specifically, for $X$ a smooth manifold, a \textbf{\emph{target
dependent system}}   $\mathcal{U} (X) $
is an element of
\begin{equation} \label{eq:productUX}
\prod _{\Sigma ^{n} \in \Delta (X)} \prod _{s \in \mathbb{N}
   _{\geq 2} } \prod
_{(m_1, \ldots , m_s) \in C _{s} (\Delta ^{n} )}
\mathcal{T} (m_1, \ldots m_s, n),
\end{equation} 
where $\Sigma ^{n}$ is a singular $n$-simplex for  $n \in
\mathbb{N} $. 
As before given a system $\mathcal{U} (X) $ its
projection onto $(\Sigma ^{n}, s, (m_1, \ldots , m_s))$
component will be denoted by $u (m_1, \ldots,
m_s, \Sigma ^{n})$.   The superscript $n$  may be
omitted, when the degree $n$  is not explicitly needed. 
\begin{definition}   Similarly to the
   Definition \ref{defNaturality},  
we say that $\mathcal{U} (X) $ 
is \textbf{\emph{natural}} if it satisfies the following axioms:
\begin{enumerate}
  \item  \label{axiom:NATX1} For all $\Sigma
     ^{n}$, $s_1, s _{2}$ and for all $i$ if $m _{i}' = m _{1} \cdot \ldots \cdot m _{s _{1} }   $ then the map 
  \begin{equation} 
     ({u} (m _{1},
\ldots, m _{s _{1} }, \Sigma ^{n}) \star _{i} u (m'_1,
     \ldots, m'_{s_2}, \Sigma ^{n})) _{1}
\end{equation}
coincides with the composition   
\begin{equation} 
   {\mathcal {S}} ^{\circ}_{s_1, s _{2},
   1}
\xrightarrow{St _{i,*}} {\mathcal {S}}
   _{s_1+s_2-1} ^{\circ} \xrightarrow{u (m'_1,
\ldots, m' _{i-1}, m _{1},
\ldots, m _{s _{1} }, m _{i+1}, \ldots,
   m'_{s_2}, \Sigma ^{n})} \Delta
^{n},
\end{equation}
for $St _{i,*}$ the bundle map induced by $St _{i} $.  

\item \label{axiom:naturalityX2}   Let $f: \Sigma
^{n} \to   \Sigma  ^{m}$  be a
morphism in $\Delta (X)$, then there is an induced functor 
$$f: \Pi
(\Delta ^{n} ) \to \Pi (\Delta ^{m}),
$$ and we ask that
\begin{equation*}
f \circ u(m_1, \ldots, m_s, \Sigma ^{n}) = u (f(m_1),
   \ldots, f (m_s), \Sigma ^{m}),
\end{equation*}
where $f$  on the left is the corresponding
map $f: \Delta^{n} \to \Delta^{m} $, cf. \eqref{eq:morphismovercategory}. 

\item \label{axiom:naturalityX3} 
  Let $(m _{1}, \ldots, m _{d}) \in C _{d}
  (\Delta^{d} ) $ for $d \geq 3$.    Suppose that
$D (m_1, \ldots, m _{d} ) = d$. Then, as in
Definition \ref{defNaturality},
$u (m _{1}, \ldots, m _{d}, \Sigma ^{d})$ induces a map of pairs:
\begin{equation}  
\widetilde{u}:  
   (\widetilde{F}, A  \sqcup \widetilde{F} | _{S _{0}
   ^{d-3}})   \to (\Delta ^{d}, \partial \Delta
   ^{d}),
\end{equation}  
and we ask that $\widetilde{u} $ be a homological degree 1 map.
\end{enumerate}
\end{definition}  
\begin{theorem} \label{thm:naturaltargetdependent}
Given a smooth manifold $X$, a natural $\mathcal{U}
   (X) $ exists and is unique up to concordance.
   (We shall say what this means in the proof.) 
\end{theorem} 
\begin{proof}
Existence is simple. Pick a natural system $\mathcal{U} $ guaranteed by Theorem
   \ref{lemmanaturalmaps}. This induces a target
   dependent natural system $\mathcal{U} (X) $ defined by:
$$\forall \Sigma ^{n}:  u (m _{1}, \ldots, m _{s},
   \Sigma ^{n}) = u (m _{1}, \ldots, m _{s}, n),  $$
where the maps $u (m _{1}, \ldots, m _{s}, n)$ correspond to
   $\mathcal{U} $.  
   
  Now uniqueness up to concordance means that
   given a pair $\mathcal{U} _{1} (X), \, \mathcal{U}
   _{2} (X) $ of natural systems, there is a
   natural system $\widetilde{\mathcal{U} } (X
   \times I) $ whose restriction to $X \times
   \{i\}$ is $\mathcal{U} _{i} (X)  $, $i=0, 1$.        
The proof of this is totally analogous to
   the inductive construction in the
   proof of Theorem
   \ref{lemmanaturalmaps}.
\end{proof}
 
\subsection {Outline of an explicit construction} \label{sec:Outline}
This section is not logically necessary, but in
order to give the reader more intuition we now
give a partial explicit construction of a natural
system $\mathcal{U} $ (not its target dependent
analogue).    To be more formal, we give an  explicit
construction of the system satisfying condition $S (4) $, as in the proof
of Theorem \ref{lemmanaturalmaps}. However, we
will not check all the properties. (Although this would be straightforward.) 
This construction could be in principle extended to all generality but at the cost of much complexity. 

Fix a geometric model for $ \overline{\mathcal {R}} _{d}$, for
example as the Stasheff associahedra.
When $d=4$ this is a pentagon.
Recall that to each corner of $ \overline{ \mathcal {R}}_4 $ we have a uniquely
associated nodal Riemann surface with 3 components and 5 marked points, one of which is called the root. 

Recall that we label the root component by $\omega$, the
  next component by $ \beta$ and the component furthest from root by $\alpha$.
  (With respect to the linear ordering described earlier.)
Denote by $M _{\alpha}$ the collection of marked points, different from the root
$e _{0} $, on $\alpha$, likewise
with $\beta, \omega$. This determines
a sub-composable sequence $mor (S _{\alpha})$ of a composable sequence $ (m
_{1} , \ldots, m _{4})$,
and likewise with $\beta, \omega$, (note that $M _{\omega}, M _{\beta} $ could be empty).
 
Let $r$ be in the gluing normal neighborhood of some corner,
corresponding to
non-zero gluing parameters $d _{\alpha, \beta} $, $d _{\beta, \omega} $. 
We now construct a map
$$f _{r} = f _{r} (m _{1}, \ldots, m _{4}): [0,4] \times
[0,1] \to \Delta ^{4}.$$
In what follows by \emph{concatenation} of a collection of paths we mean their
product in the Moore path category of $\Delta ^{4} $, the notation for
composition will be assumed to be diagrammatic.
This is the category with
objects: points of $\Delta ^{4} $, and morphisms from $x _{0} $ to $x _{1} $:
continuous paths $[0,T] \to \Delta ^{4} $, $T>0$, between $x _{0}, x _{1}  $,
with composition the natural concatenation of
paths. Note that this is quite different from our previously defined groupoid $\Pi (\Delta ^{4} )$.

For a morphism $m$ in $\Pi (\Delta ^{4})$
let $s(m)$ and $t({m})$ denote the source respectively target of $m$. Let $H
^{m}: \Delta ^{4} \times [0,1] \to \Delta ^{4}$ denote  the natural deformation
retraction of $\Delta ^{4}$ onto the edge determined by $s (m), t (m)$, with
time $1$ map the orthogonal affine projection onto this edge (for the standard
metric on $\Delta ^{n} $).
 Set $H
^{m} _{\tau} = H ^{m}| _{\Delta ^{n} \times \{\tau\}}$. Next, for a general
piece-wise affine path $p: [0,T] \to \Delta ^{4}$, with end points $s (m), t
(m)$, we have a homotopy $H ^{m} _{\tau}  \circ
p$, $ \tau \in [0,1]$, from $p$ to a path $$ \widetilde{p}: [0,T] \to \Delta
^{4}$$ with image in the edge determined by $m$.
Let $D (p, \tau)$, $ \tau \in [0,1]$, be the concatenation of $H ^{m} _{\tau}  \circ
p$ with the homotopy $G
_{\tau}$, $\tau \in [0,1]$, of paths with fixed end points, from $ \widetilde{p}$ to the map $$
\widetilde{m}: [0,T] \to \Delta ^{4}$$ linearly parametrizing the edge determined by $m$. This second homotopy
$G _{\tau}$ can be chosen in a way that depends only on $
\widetilde{p}$. This can be done explicitly, using piece-wise linearity of $p$.

 The map $f ^{t} _{r}$ from
the $y=t$ slice $ [0,4] \times \{t\}$ is constructed 
as follows.  Set $$I _{\alpha}= (1 -d
_{\alpha, \beta})/2,$$ and set $f _{\alpha, r}$ to be the concatenation  of the morphisms
in $mor(M _{\alpha})$. That is if $$M _{\alpha} = (m ^{\alpha} _{1}, \ldots, m
^{\alpha} _{k}) $$ then $$f _{\alpha, {r} } =  m ^{\alpha} _{1} \cdot \ldots \cdot
m ^{\alpha} _{k}.
$$ Then for  $t \in [0, I
_{\alpha}]$
set $ f ^{t} _{\alpha,r} = D ( f
_{\alpha, r},  2t) $. 
Then set $f ^{t} _{r}$ to be the concatenation of morphisms in $mor(M
_{\beta})$, $mor(M _{\gamma})$ and of $ f ^{t} _{\alpha, r}$, in that order,
although 
note that the
order of the morphisms in the concatenation is uniquely determined by the end
point conditions, this holds further on as well.

Next set  $$I _{\beta} = I _{\alpha}+(1- I _{\alpha})(1
- d _{\beta})/2.$$ If $\alpha$ and $\beta$ components have a nodal point in common we set $$f _{\beta,r}: [0,4] \times \{t\} \to \Delta ^{4}$$ to be the
concatenation of $f ^{I _{\alpha}} _{\alpha, r} $ with morphisms in $mor(M
_{\beta})$, and for $t \in [I _{\alpha}, I _{\beta}]$
we set $$ f ^{t} _{\beta,r}  = D( f _{\beta,r},  {\frac{2(t - I
_{\alpha})}{1 - I _{\alpha}}}  ).$$
And then for $t \in [I _{\alpha}, I _{\beta}]$ set $f ^{t} _{r}$ to be the concatenation of morphisms in  $mor(M _{\omega})$ and of $ f ^{t} _{\beta,r}$. 

Finally, set $f  _{\omega,r} $ to be the concatenation of $f  ^{I _{\beta}} _{\beta, r}$ with morphisms in $mor(M _{\omega})$, and for $ t \in [I _{\beta}, 1]$ set $${f} ^{t} _{r} = D( {f _{\omega,r}}, \frac{2(t - I _{\beta})}{1-I _{\beta}}).$$ 

When $\alpha$  has a nodal point in common with
the $\omega$ component  set $f _{\beta,r}$ 
to be the concatenation of  morphisms in $mor(M _{\beta})$, and for $t \in [I _{\alpha}, I _{\beta}]$
set $$ f ^{t} _{\beta,r}  = D({f _{\beta,r}},
{\frac{2(t - I _{\alpha})}{1 - I _{\alpha}}}).$$
Then for $t \in [I _{\alpha}, I _{\beta}]$
set $f ^{t} _{r}$ to be the
concatenation of morphisms $f ^{I _{\alpha}} _{r}$ and $ f
^{t} _{\beta,r}$, and $mor (M _{\omega})$ (although $ mor (M _{\omega})$ in this
particular case is empty, we add this so that the degenerate case $M
_{\alpha} = \emptyset$, $M _{\beta}=\emptyset$ makes sense, see the discussion
below).
Finally, for $ t \in [I _{\beta}, 1]$ set $$f  ^{t} _{r} = D({f _{r} ^{I _{\beta}}}, {\frac{2(t - I _{\beta})}{1 - I _{\beta}}}).$$

When $r \in \overline{\mathcal {R}} _{4}$ is in the gluing neighborhood of a face but not
of a corner the construction of $f_r: [0,4] \times [0,1] \to \Delta ^{4}$ is similar, in fact we
can think of it as a special case of the above by setting $d _{\beta}=1$, $M
_{\beta} = \emptyset$. 
When $r \in \overline{\mathcal {R}} _{4}$ is not in the gluing neighborhood of the
boundary we can also think of this as a special case of the above, with
 $M _{\alpha} = \emptyset, M _{\beta} = \emptyset$, $d _{\alpha}=1$, $d _{\beta}=1$ in the
construction above. 

\subsubsection {Retracting $\mathcal{S} _{r} $ onto $[0,4] \times [0,1]$}
We now construct a smooth $r$-family of maps $$ret _{r}:  { \mathcal {S}} _{r} \to
[0,4] \times [0,1],$$ $r \in {\mathcal {R}}  _{4}$,
 suitably compatible with the maps $$ {f} _{r}: [0,4]
\times [0,1] \to \Delta ^{4}.
$$ 
In figure \ref{coloredtrees} $ (a)$, $ (b)$, $ (c)$ represent cases where $ (c)$: $r$ is
not within gluing normal neighborhood of boundary, $ (b)$: $r$ is in a gluing neighborhood 
of a side but not a corner and $ (a)$: $r$ is within gluing neighborhood of a
corner, (we picked a particular corner and side for these diagrams). The color shading
will be explained in a moment. In each case $ (a), (b), (c)$ we first color
shade $ [0,4] \times [0,1]$ as in figure \ref{squareabg}, the green region is
the domain of $ f ^{t} _{\alpha, r}$ contained in $ [0,4]
\times [0, I _{\alpha}]$, in the blue regions the map $f _{r}$ is vertically
constant, the red region is the domain of $ f ^{t} _{\beta,r}$
contained in $ [0,4] \times [I _{\alpha}, I _{\beta}]$ and yellow region is the rest of the domain of
$f _{r}$. The maps $ret _{r}$ are defined
for each $r$ by taking color shaded areas to color shaded areas, so that 
the following holds. 
\def\svgwidth{4in} 
\begin{figure}
\centering 
\includegraphics[width=4in]{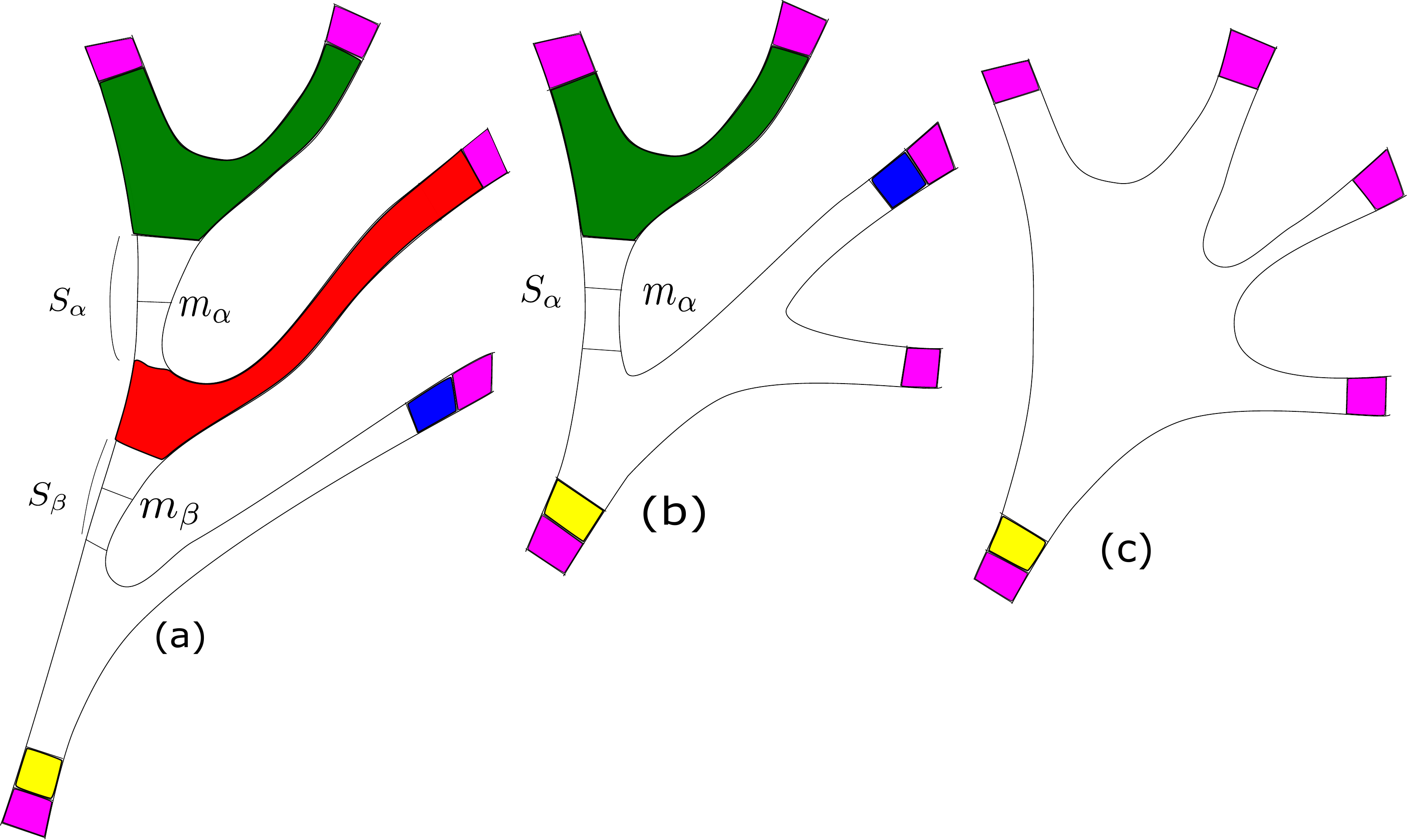}
 \caption{The uncolored enclosed regions labeled 
 $S _{\alpha}$, $S _{\beta}$
 surrounding segments $m _{\alpha}$, $m _{\beta}$ are ``thin''.}   
 \label {coloredtrees}
\end{figure}
 
 \def\svgwidth{2in} 
\begin{figure}   
\centering 
\includegraphics[width=2in]{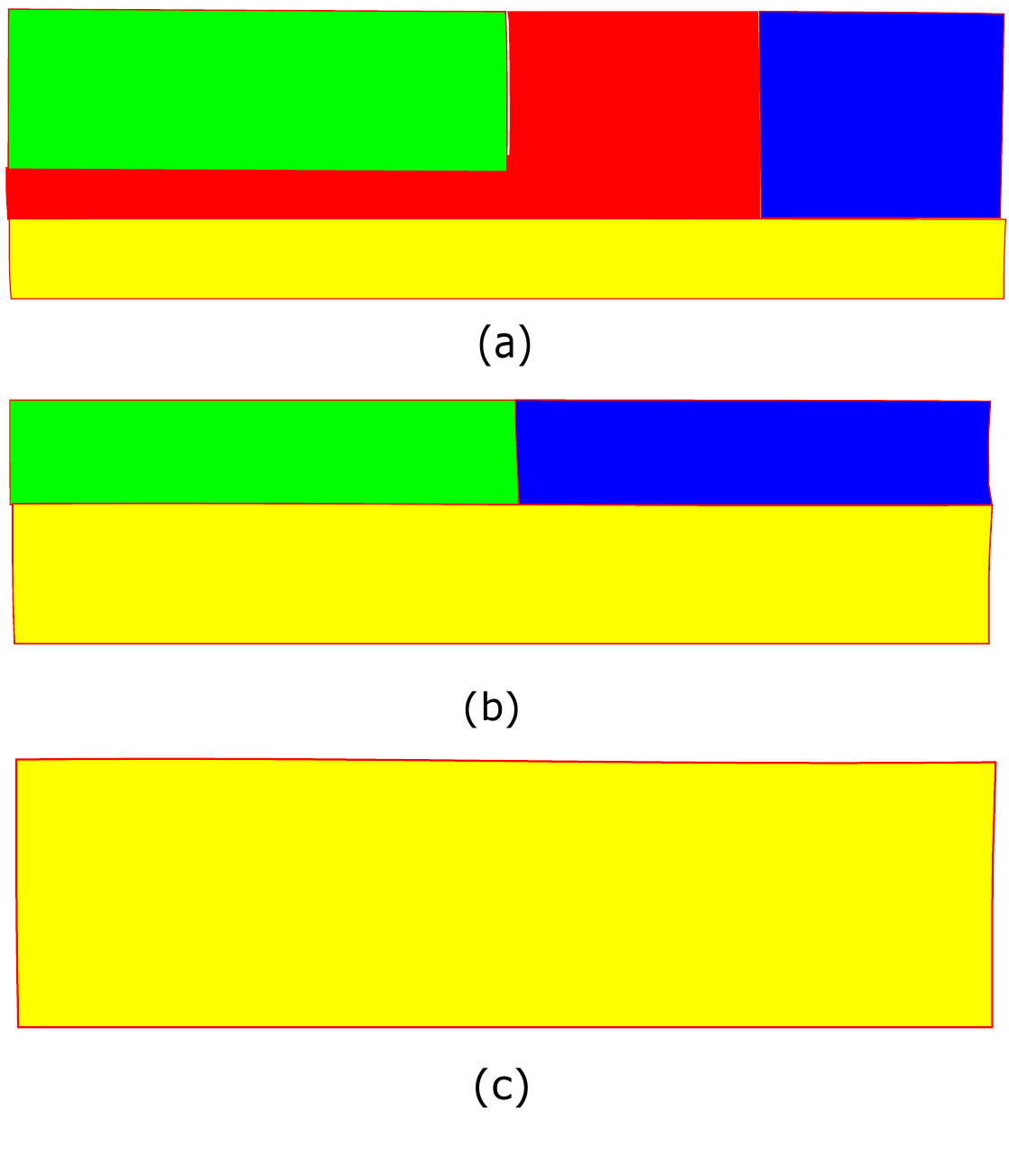}
   \caption{Diagram for $S_d$. Solid black border is boundary, while
   dashed red lines are open ends.
   The connection $\mathcal {A} ({ r,\{L_i\}})$ preserves
   Lagrangians $L_i$ over boundary components labeled $L_i$.}
 \label {squareabg}
\end{figure}
\begin{enumerate} 
  \item The ends $e _{i}$, $i=1, \ldots, 4$  of
  $ \mathcal {S} _{r}$, colored in purple, are
      identified in strip end coordinates as $
      (0, \infty)
  \times [0,1]$ and in these coordinates $ret _{r}$ is the composition of the projection $  (0,
  \infty) \times [0,1] \to [0,1]$, with the map
      $\underline {m} _{i} $  to the boundary of $
      [0,4] \times [0,1]$,  characterized as
      follows:  $f
      _{r} \circ \underline {m} _{i} $ parametrizes the  morphism $m _i \in \Pi (\Delta
      ^{4})$.   Similarly for the $e
      _{0}$ end.

\item The boundary of $\mathcal{S} _{r}$ goes either to the boundary of $ [0,4]
\times [0,1]$ or to the vertical boundary lines between colored regions.
\item The unshaded ``thin'' regions labeled $S _{\alpha}$, $S _{\beta}$ come
from the gluing construction and are identified with $ [0,1] \times (-\phi (\tau _{\alpha} ), \phi (\tau _{\alpha} ))$, respectively $ [0,1] \times (-\phi (\tau _{\beta} ), \phi (\tau _{\beta} ))$. In these coordinates
$ret _{r}$ on $S _{\alpha}$, $S _{\beta}$ is the projection to $ [0,1]$ composed
with a diffeomorphism onto the lower edge of
      the green, respectively the red
region, (affine in respective natural coordinates). 
\item The unshaded part of $\mathcal{S}_{r} $ is collapsed onto the horizontal line
bounding yellow region of $ [0,4] \times [0,1]$.
\item Blue shaded regions are identified in strip
   end coordinates $ (0,
\infty) \times [0,1] \to \Sigma _{r}$, as $ [0,1] \times [0,1]$, and are mapped
to the corresponding blue regions in $ [0,4] \times [0,1]$.
\end{enumerate}
(The above prescription naturally extends to the boundary $ \overline{\mathcal
{R}} _{4}$.)

We then set $$u (m _{1}, \ldots, m _{4}, \Delta ^{4}) | _{\mathcal{S} _{r} } = f _{r} \circ ret _{r}.  $$ 
These are almost the maps we want, but we need to
``collar them'' near the boundary of
$\overline{\mathcal{R}} _{4}  $, so that Axiom 1 of
naturality is satisfied.   We omit the details.

\section {Auxiliary data $\mathcal{D}$} 
\label{section:dataD} 
Let $$M \hookrightarrow P \xrightarrow{\pi} X$$ be a Hamiltonian fiber bundle, with model fiber $(M, \omega)$
that we shall assume here to be 
a closed, monotone: $$ [\omega] = const \cdot 2 c _{1} (TM),$$
$const > 0$, symplectic manifold. The constant
$const$  is called the \emph{monotonicity}  constant.  

We now discuss geometric-analysis theoretic data
needed for the construction of the functor $F
_{P}: \Delta^{} (X) \to A _{\infty}-Cat ^{unit}$, as
outlined in Section \ref{sec:IntroAfunctor} of the introduction. Essentially, this
data $\mathcal{D} = \mathcal{D} (P)  $ specifies a
choice of a (target dependent) natural system
$\mathcal{U}$   and various choices of
Hamiltonian connections, as well as certain
choices of almost complex structures.  These
choices are to be made for each $\Sigma \in Simp
(X)  $, while being suitably compatible, so that
we obtain our functor $F _{P}$.

For $(M, \omega) $   as above, we say that a Lagrangian submanifold $L \subset M$ is
\emph{monotone} if
the homomorphisms given by symplectic area and Maslov class
$$[\omega]: H_2(M, L) \to \mathbb{R}, \quad \mu: H _{2} (M, L) \to \mathbb{Z} $$
are proportional: $$[\omega] = const \cdot \mu.$$ 

For an $x: pt \to X$, define 
\begin{equation}
   \label{eq:Fx}
   F (x) =F _{P} (x),
\end{equation}
to be the set of
oriented, spin, monotone Lagrangian submanifold
$L$  in $$(P _{x}=x ^{*}P, \omega _{x}) \simeq (M,
\omega)$$ with minimal
(positive) Maslov
number at least 2, and such that the inclusion map $\pi _{1} (L) \to \pi _{1} (M)
$ vanishes. We call elements of $F _{P} (x)$
\textbf{\emph{objects}}. These will in fact be
objects of a  certain $A _{\infty}$ category to be
constructed.

Let $L \in F _{P} (x)$, and $j$ be an almost complex
structure on $P _{x}  $ tamed by $\omega
_{x}$, meaning that $$\forall v \in TP _{x}, v \neq
0:   \omega _{x} (v, jv)   > 0. $$   Let $\mathcal{M}
     (L,j)$ denote the moduli space of Maslov
number 2 $j$-holomorphic discs in $P _{x}  $, with
one marked point on the boundary, with boundary of
the disk in $L$.
It is well known, see Sheridan~\cite[Section 2.3]
{citeNickSheridanOntheFukayaCategory} (which also
contains a number of additional references)  that for a generic such $j$, $\mathcal{M}
(L,j)$ is regular, is a transversely cut out
$n$-dimensional manifold and is compact. The
compactness is due to the following fact.  If $\mathcal{M}
(L,j)$  were not compact, then by Gromov
compactness there would be a sequence of curves in $$\mathcal{M}
(L,j)$$
degenerating to a nodal curve with at least a pair
of components. One of these components has Maslov
number at least $2$, by our assumption on the minimal Maslov
number. And the other component 
contributes positively to the total Maslov number
of the nodal curve.   (The monotonicity, and
energy positivity preclude negative Maslov/Chern number components.) 
This would clearly be a contradiction, by
the additivity of the Maslov number.

Then we have a map corresponding to the evaluation at the marked point:
\begin{equation*}
ev: \mathcal{M} (L, j) \to L, 
\end{equation*}
and we define $\omega (L) \in \mathbb{Z}$ as the degree of $ev$.

Given a smooth $$\Sigma: \Delta ^{n} \to X, $$  
 set $x _{i}: = \Sigma (i) \in X $, $i \in
\{0, \ldots, n\}$ a vertex of $\Delta^{n} $.  Also
denote by $x
_{i}$   the corresponding inclusion map $x _{i}:
pt \to X$.  Set 
\begin{equation}
   \label{eq:FPSigma}
   F _{P} (\Sigma):= \bigsqcup _{i} F _{P} (x
   _{i}),  
\end{equation}
with elements likewise called objects, at the moment this is just a set of Lagrangians,
but later on this will be the set of objects of a
certain $A _{\infty}$ category, (with the same name). 

Given a pair of objects  $$L _{0}, L _{1}
\in F _{P} (\Sigma ),$$  satisfying 
$$\omega (L _{0} ) = \omega (L _{1} ),$$
and such that $L _{0} \in F
_{P} (x _{i}), L _{1} \in F _{P} (x _{j}),  $ 
let $$m = m _{L _{0}, L _{1}}: [0,1] \to \Delta
^{n}$$ denote the edge
between $i,j$ corners of $\Delta ^{n} $, $i,j \in
\{0, \ldots, n\}$.  We then set $$
\overline{m} := \Sigma \circ m.
$$  
For each such $\Sigma$, and for each $L _{0}, L _{1}$
as above, the data $\mathcal{D}$
prescribes a  Hamiltonian connection $$ \mathcal {A} (L _{0}, L _{1})= \mathcal {A} (L _{0}, L _{1}, \overline{m} )$$  on
$ \overline{m} ^{*} P$. (See Section
\ref{sec:HamiltonianBundles} for definition of
Hamiltonian connections.) 

Denote by $\mathcal{A} (L _{0}, L _{1}  ) (L _{0}
)$ the Lagrangian $\phi (L _{0})  \subset P _{x _{j} }
$, for $\phi$ the $\mathcal{A} (L _{0}, L _{1}  )
$-parallel transport map over $[0,1]$.
Then we require that
$\mathcal{A} (L _{0}, L _{1}  ) (L _{0} )$ be transverse to
$L _{1} $.
\begin{definition}
   \label{notation:SL0L1}  
   Let $$S (L _{0}, L _{1}  ) = S (L _{0}, L _{1},
   \mathcal{A} (L _{0}, L _{1}) )  $$ denote the space of $ \mathcal {A} ({L_0,
 L_1})$-flat sections with boundary on $L _{0}, L
   _{1}  $, over $0$ respectively over $1$.  In other words elements of $S (L _{0},
   L _{1}  ) $ are sections $$\gamma: [0,1] \to
   \overline{m} _{L _{0}, L _{1}} ^{*}P, $$
   tangent to the 
   $\mathcal{A} (L _{0}, L _{1}) $-horizontal
   distribution and satisfying $\gamma (0)  \in  L
   _{0}$ and $\gamma (1) \in L _{1}$.   By a \textbf{\emph{starting
   position}}  
   of an element $\gamma \in S (L _{0}, L _{1}  ) $ we
   mean $\gamma (0) \in L _{0}$. Likewise by
   an \textbf{\emph{ending position}}  of an element $\gamma \in S (L _{0}, L _{1}  ) $ we
   mean $\gamma (1) \in L _{1}$.
\end{definition}

\begin{definition} \label{def:jtadmissible}
 Let $m = m _{L_0, L _{1}} $ be as above,  and let 
$$\{j _{t} \} _{t \in [0,1]} = j  (L _{0}, L _{1},
   \overline{m}   ) $$ be a family of fiber-wise
   almost complex structures on $ \overline{m}
   ^{*} P$, s.t. for each $t$ $j _{t}$ is tamed by the
   symplectic form $\omega
   _{\overline{m} (t) }$ on $P _{\overline{m} (t)
   }$.  Then $\{j _{t} \}$  is said to be \textbf{\emph{admissible with respect to $\mathcal{A}(L _{0}, L _{1})$}} if the following holds. 
\begin{itemize}
  \item  
For each $t$,  Chern number $1$ $j _{t}  $-holomorphic spheres in $P _{
\overline{m} (t) } \subset  \overline{m} ^{*} P  $
do not intersect any of the images of any elements of $S (L _{0}, L _{1}  ) $. 
\item The moduli spaces $\mathcal{M} (L _{0}, j _{0}  )$, $\mathcal{M} (L _{1}, j
_{1} )$ are regular, and the evaluation map
\begin{equation*}
ev _{0} : \mathcal{M} (L _{0}, j _{0}  ) \to L _{0}
\end{equation*}
does not
intersect the set of starting positions of elements of $S (L _{0}, L _{1}  )$.
Likewise, the evaluation map
\begin{equation*}
ev _{1} : \mathcal{M} (L _{1}, j _{1}  ) \to L _{1}
\end{equation*}
does not
intersect the set of ending positions of elements of $S (L _{0}, L _{1}  )$.
\end{itemize}
\end{definition}
Such a family $j  (L _{0}, L _{1},
   \overline{m}   )$ is easily seen to exist, see
Sheridan~\cite{citeNickSheridanOntheFukayaCategory}. Our
data $\mathcal{D} $  then fixes a choice of such
$j  (L _{0}, L _{1},
   \overline{m}   )$ for each $\Sigma, m, L _{0},
L _{1}$ as above.

Next $\mathcal{D}$ makes a choice of a target
dependent natural system $\mathcal{U} (X) $. 
Finally, $\mathcal{D}$
will specify a certain natural system of
Hamiltonian connections and a system  of complex structures that
we now describe. 
This is to be done for all choices of
certain Lagrangian labels. This involves some
necessarily complicated notation, but there is nothing deep
going on, once we have the geometric input of the
system $\mathcal{U} (X) $.
Loosely speaking, $\mathcal{D} $ is just a system of compatible perturbations in the sense of Sheridan and Seidel but relative to
$\mathcal{U} (X) $. 
\subsection {From a Hamiltonian fibration over $X$
to Hamiltonian fibrations over surfaces} 
Let $M \hookrightarrow P \to X$ be as before, and
let a natural system $\mathcal{U} (X) $   be
chosen. 
Given a
composable chain $ (m_1, \ldots, m_d)$ and a map
$$u (m_1, \ldots, m_d, \Sigma ^{n}):
{\mathcal {S}} _{d} ^{\circ}  \to \Delta ^{n} \text{ that is part of a natural system
$\mathcal{U} (X) $},$$ we
have an induced fibration $$M \hookrightarrow \widetilde{S} (m_1, \ldots, m_d,
\Sigma ^{n}) \to {\mathcal {S}} _{d} ^{\circ}  $$ by pulling back $M
\hookrightarrow P \to X$ first by $\Sigma ^{n}: \Delta ^{n} \to X$ and then by
$u (m_1, \ldots, m_d, \Sigma ^{n})$. We have a natural projection 
$$  \widetilde{S} (m_1, \ldots, m_d, \Sigma ^{n}) 
\to \overline{\mathcal{R}} _{d},  $$
and we denote the fiber over $r \in \overline{{ \mathcal {R}}} _{d}$ by $ \widetilde{S} (m_1, \ldots,
m_d, \Sigma ^{n},r)$, or simply by $\widetilde{\mathcal{S}}_{r}  $ where there
can be no confusion. So $\widetilde{\mathcal{S}}_{r}  $
is naturally a Hamiltonian $M$-fibration over the surface $ \mathcal {S} _{r}$, smooth over smooth components.  To
state this another way, $$\widetilde{\mathcal{S} } _{r} = (u (m
_{1}, \ldots, m _{d}, \Sigma ^{n}, r) \circ \Sigma) ^{*} P,$$
where 
\begin{equation} \label{eq:unr}
  u (m
_{1}, \ldots, m _{d}, \Sigma ^{n},r) := u (m
_{1}, \ldots, m _{d}, \Sigma ^{n})| _{\mathcal{S} _{r}}.
\end{equation} 
\subsubsection {Distinguished trivializations} 
By the partial naturality properties  of the maps $u (m _{1}, \ldots, m _{d}, \Sigma ^{n},r) $, at each $e _{i}$ end, $1 \leq i \leq d$,   of $ \mathcal {S} _{r}$, we have natural
trivializations $$(0, \infty) \times \overline{m} _{i}  ^{*} P \to \widetilde{\mathcal{S} } _{r}.$$ Similarly, at
the $e _{0}$ and.  For $r \in \mathcal{R} _{d}$,  
we also have natural trivializations of
$\widetilde{\mathcal{S} } _{r} $  over the
$i$'th boundary component of $\mathcal{S} _{r}$,
$0 \leq i \leq d-1$, as $\mathbb{R} ^{} \times P
_{s (m _{i+1})}$. 
  Or as $ \mathbb{R} ^{}  \times P _{t (m_d)} $
over $d$'th boundary component.  The ordering is
as described in the preamble of Section \ref{sec:systemOfmaps},
see also Figure \ref{fig:sidenumberedsurface}. 
There are analogous natural trivializations also for general $r \in
\overline{\mathcal{R} } _{d}$.   
We shall call these \textbf{\emph{distinguished
trivializations/coordinates}}.   And the structure
of these trivializations will be called the
\textbf{\emph{distinguished trivialization
structure}}. 
\subsection {Lagrangian labels and
admissible connections} 
Given $r \in \mathcal {R} _{d}$, 
and given choices $$L _{i} \in F (\overline{m} _{i+1}
(0)), \text{ for $0 \leq i \leq d-1$, } \quad L
_{d} \in F (\overline{m} _{d} (1)),$$ 
such that $\omega (L _{i}) = \omega (L _{d} )$ for
all $i =0, \ldots, d$,
a \emph{labeling} is just an assignment of $L
_{i}$ to the $i$'th component of $\partial \mathcal{S}
_{r}$.   
Extend the labeling construction above naturally to
$\mathcal{S} _{r}$, with   $r \in \partial
\overline{ \mathcal {R}} _{d}$.  In other words,
for such an $\mathcal{S} _{r}$,  we
label the boundary components in such a way that if
we glue at some node of $\mathcal{S} _{r}$ then
each boundary component of the glued surface inherits a consistent label.  
See Figure \ref{fig:label} below. 
\begin{figure}[h]
  \includegraphics[width=2in]{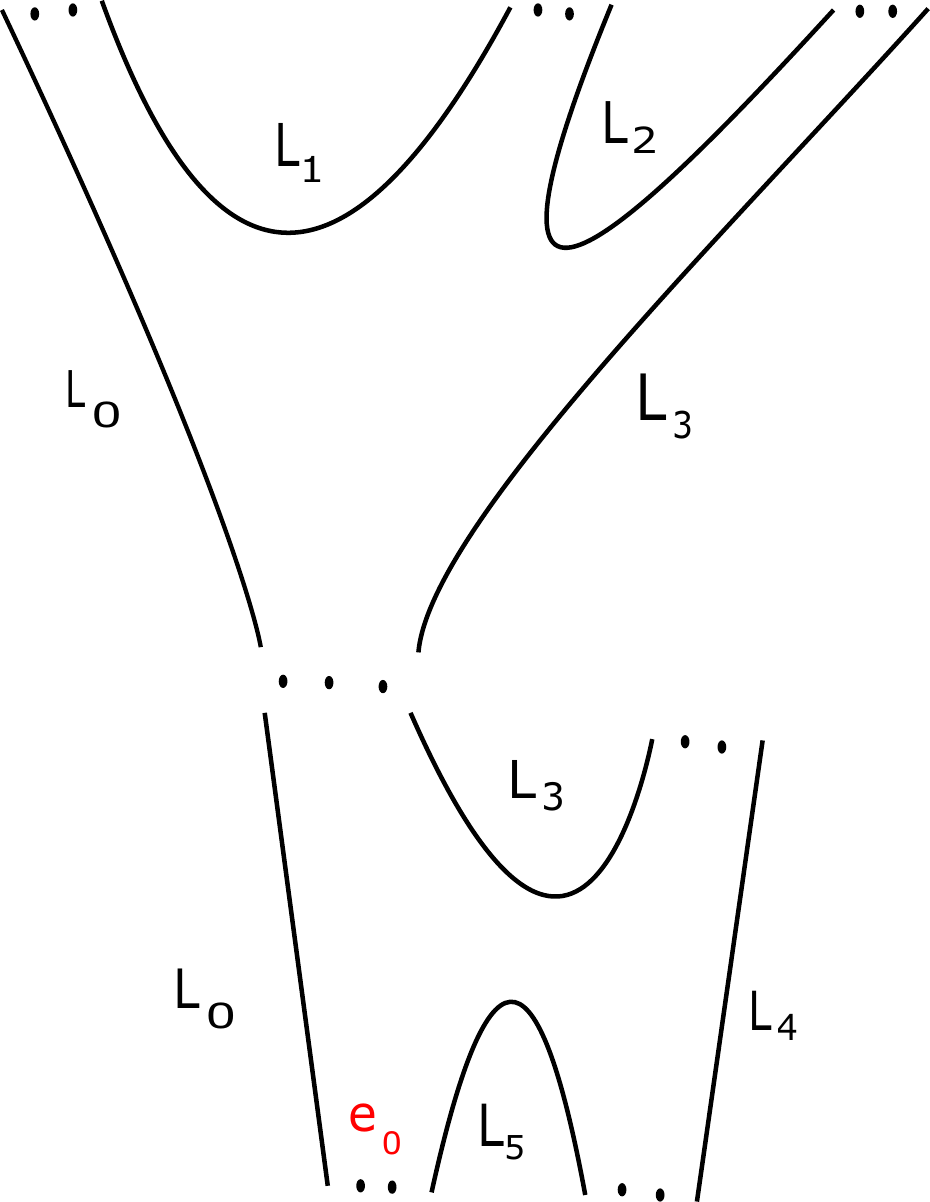}
 \caption {} \label{fig:label}
\end{figure} 
For the moment we do 
not specify any dependence of the labels on $r$.

Let us from now on omit the superscript $n$ in
$\Sigma ^{n}$  where there is no need to
disambiguate.
\begin{definition}
   \label{def:admissibleConnection} We say that a
   Hamiltonian connection (cf. Section \ref{sec:HamiltonianBundles}) $ \mathcal {A}$ on the
   Hamiltonian fibration $$
M \hookrightarrow \widetilde{ \mathcal {S}} (m_1, \ldots, m_d, \Sigma,r) \to \mathcal {S}
   _{r},$$ $r \in \overline{ \mathcal{R}} _{d}$,  is \emph { \textbf{admissible}} with respect
   to a labeling $L _{0}, \ldots, L
_{d}$ if:
\begin{itemize}
  \item  Parallel transport by $ \mathcal {A}$ over the boundary component(s) of $\partial \mathcal{S} _{r}$ labeled $ L _{i}$ preserves the Lagrangian $L _{i}$, $0 \leq i \leq d$, in the distinguished coordinates.  

\item  For $1 \leq i \leq d $,  in the distinguished coordinates
$(0, \infty) \times \overline{m} _{i}  ^{*} P$,
at each $e _{i}$ end, $$\mathcal{A} = \widetilde{\mathcal{A}} (L _{i-1}, L _{i}, \overline{m} _{i}  ) := pr ^{*} \mathcal {A} (L _{i-1}, L _{i}, \overline{m} _{i}  ) $$ for $$pr: (0,
\infty) \times \overline{m} _{i}  ^{*} P \to
\overline{m} _{i}  ^{*} P $$ the natural
bundle map projection.   
Here   $\mathcal {A} (L _{i-1}, L
_{i}, \overline{m} _{i}  )$ are part of our
data $\mathcal{D} $ as previously discussed. 
\item  Likewise, at the $e _{0}$  end in the
distinguished coordinates
$(-\infty,   0) \times \overline{m} _{0}
^{*} P$,  $\mathcal{A} = pr ^{*} \mathcal
{A} (L _{0}, L
_{d}, \overline{m} _{0}  ) $ for $pr$
similarly defined.   
\end{itemize}
\end{definition} 
We do not yet impose any conditions at the nodes,
but certain conditions will be forced by
the additional 
properties of the Definition \ref{def:natural}
below. For a preview, we remark that $\mathcal{D} $ will
make a choice of such a connection for all
possible $L _{0}, \ldots, L _{d}$   as above.
  
We also have a Lagrangian sub-fibration 
of $$
\widetilde{ \mathcal {S}} _{r}  \to \mathcal {S}
_{r}$$
over the boundary of
$\mathcal{S}_{r} $,
whose fiber over an element of
the boundary component labeled $L _{i} $ is $L
_{i} $, in the distinguished
coordinates. (This naturally
extends to the case $\mathcal{S} _{r}$ is nodal.)

We name this sub-fibration by
\begin{equation} \label{eq:subfib}
{\mathcal{L}}  (\mathcal{U},  L _{0}, \ldots , L _{d}, r  ).
\end{equation} 
In particular, by construction, if $\mathcal{A}$ is
admissible with respect to $L _{0}, \ldots, L
_{d}$ as above then it preserves this sub-fibration.
\begin{notation}
   \label{} 
Denote by $$ \mathcal {T} (m _{1}, \ldots, m _{d};L _{0}, \ldots, L _{d},
\Sigma,r) = \mathcal {T} (L _{0}, \ldots, L _{d},
\Sigma,r)$$ the space of
Hamiltonian connections on $$ M \hookrightarrow  \widetilde{ \mathcal {S}} (m_1, \ldots, m_d,
\Sigma,r) \to \mathcal{S} _{r}$$
admissible with respect to $L _{0}, \ldots, L _{d}$. Note that this implicitly requires a chosen system $\mathcal{U} (X) $, which will not be indicated.
\end{notation}
   
\subsection {Gluing admissible connections.}
\label{sec:gluingconnections} 
Given an element $$ \mathcal {A} \in \mathcal {T}
(m _{1}, \ldots, m _{s _{1}}; L_0,
\ldots, L _{s_1} , \Sigma,r)$$ and an element $$ \mathcal {A}' \in \mathcal {T}
(m _{1}, \ldots, m _{s _{2}};L'_0, \ldots, L' _{i-1}, L' _{i} , L' _{i+1}, \ldots, L'
_{s_2}, \Sigma, r'),$$  s.t. $m_1 \cdot \ldots \cdot m _{s_1}=m' _{i}$ 
and s.t. 
$L' _{i-1}= L _{0}, L
' _{i}= L _{s _{1} }     $, we have a naturally
induced glued connection $(\mathcal{A} \star _{i}
\mathcal{A}') _{\tau}$ 
on $$\widetilde{\mathcal{S} } _{r,r', \tau}:= (u \star _{i}
u') _{r,r', \tau} ^{*} \circ \Sigma ^{*} P, $$
where $(u \star _{i}  u') _{r,r', \tau}$  is as in
\eqref{eq:ustaru'taur}.  The construction of
the connection $(\mathcal{A} \star _{i}
\mathcal{A}') _{\tau}$  is analogous to the construction of the maps
$(u \star _{i}  u') _{r,r', \tau}$, see also
Figure \ref{fig:gluedconnection}.
\begin{figure}[h]
\includegraphics[width=2in]{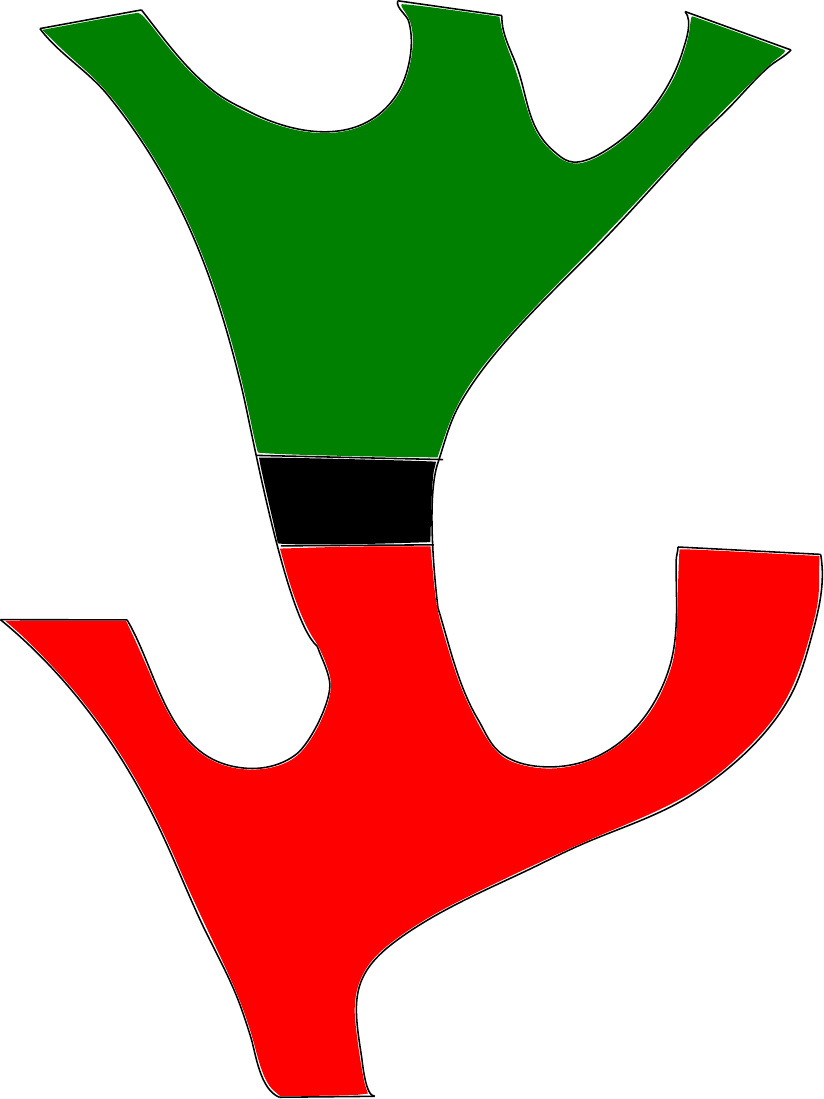}
 \caption{The green region is identified with a
   subregion of the surface $\mathcal{S} _{r}$,
   the red region is identified with a
   subregion of the surface $\mathcal{S} _{r'}$. 
   We have similar identifications  of the
   fibration  $\widetilde{\mathcal{S} } _{r,r',
   \tau}$ over these regions with (sub-fibrations
   of) of $\widetilde{\mathcal{S} } _{r},
   \widetilde{\mathcal{S} } _{r'} $.  
Then with respect to this identification $(\mathcal{A} \star _{i}
\mathcal{A}') _{\tau}$ 
   over the green region is the connection
   $\mathcal{A} $, and over the red region it is the connection
   $\mathcal{A}' $. Over the black region $(\mathcal{A} \star _{i}
\mathcal{A}') _{\tau}$ 
  is the connection $pr ^{*} \mathcal {A} (L
_{i-1}, L _{i}, \overline{m}' _{i}  )$, discussed
   ahead.} \label{fig:gluedconnection}
\end{figure} 
The pair $\mathcal{A}, \mathcal{A}'$ as above will be called \emph{composable}. 
Thus, applying Axiom 1 of naturality
of $\mathcal{U} (X) $, for a composable pair $\mathcal{A}, \mathcal{A}'$
as above we get induced connections $\{St _{i}
(\mathcal {A}, \mathcal {A}', \tau)\} _{0 \leq \tau <1} $, so that $$St _{i} (\mathcal {A}, \mathcal {A}', \tau) \in \mathcal {T} (L'_0, \ldots, L'
_{i-2}, L_0, \ldots, L _{s_1}, L' _{i+2}, \ldots, L'
_{s_2}, \Sigma, St _{i} (r, r', \tau)),$$
for $0 \leq \tau < 1$, and so that in addition we have the following. 

Over the thin region $thin _{\tau,i} \subset \mathcal{S} _{St _{i} (r,r', \tau) } $, for $\tau>0$,
$\widetilde{\mathcal{S}}_ {St _{i} (r,r', \tau) }
$ is naturally isomorphic to the fibration $$ (-\phi (\tau), \phi
(\tau)) \times (\overline{m}' _{i})
^{*} P \to (-\phi (\tau), \phi
(\tau)) \times [0,1] $$ by
Axiom \ref{axiom:partial2}, \ref{axiom:partial3} of
partial naturality, and Axiom \ref{axiom:1} 
naturality. Here $\phi$  is as in
\eqref{eq:phiearly}. We likewise call this
isomorphism the \emph{distinguished coordinates/representation}
extending the previous use of this term. 
Then over $thin _{\tau,i} $, in the above
  distinguished representation, $St _{i} (\mathcal {A}, \mathcal
{A}', \tau)$ is the connection $pr ^{*} \mathcal {A} (L
_{i-1}, L _{i}, \overline{m}' _{i}  )$, where
$$pr:  (-\phi (\tau), \phi
(\tau)) \times (\overline{m}' _{i})
^{*} P \to (\overline{m}' _{i})
^{*} P$$ is the natural projection.

\subsection {Admissible fiber almost complex structures} 
\begin{definition} We say that a family $\{j _{z} \}$ of fiber-wise, $\{\omega
_{z} \} $-compatible, almost complex
structures on the Hamiltonian $M$-fibration $
\widetilde{ \mathcal {S}} (m_1, \ldots, m_d, \Sigma,r) \to \mathcal {S}
_{r}$ is \emph { \textbf{admissible}} with respect to $L _{0}, \ldots, L
_{d}$ if:
\begin{itemize}
\item At the $i$'th end of $ \mathcal {S} _{r}$, $1 \leq
i \leq d$ in the distinguished trivialization $$(0, \infty)
\times \overline{m} _{i}  ^{*} P  \to \widetilde{ \mathcal
{S}}$$ we have $$\{j _{z} \} = \widetilde{j} (L _{i-1}, L _{i})
:=  pr ^{*} j (L _{i-1}, L _{i}, \overline{m} _{i})$$ for   
$$pr: (0, \infty) \times \overline{m} _{i}  ^{*} P \to \overline{m} _{i}  ^{*} P$$ the projection. 
Here $ {j}  (L _{i-1}, L _{i},
\overline{m} _{i}  )$, is as
in Definition \ref{def:jtadmissible}, and is part of our data $\mathcal{D} $  as previously discussed.
\item At the $e _{0}$ end, define admissibility analogously. 
\end{itemize}
\end{definition}
\subsection {Gluing admissible fiber almost
complex structures} \label{sec:gluingalmostcomplex}
Denote by $$ \mathcal {J} (L _{0}, \ldots, L _{s},
\Sigma, r)$$ the space of
families of fiberwise almost complex structures $\{j _{z} \}$ on $$ \widetilde{ \mathcal {S}} (m_1, \ldots, m_s,
\Sigma,r)$$
admissible with respect to $L _{0}, \ldots, L _{s}$.

Given an element $ \{j _{z} \}$ in $ \mathcal {J} (L_0,
\ldots, L _{s_1} , \Sigma,r)$ and an element $$ \{ {j}' _{z} \}  \in \mathcal {J}
(L'_0, \ldots, L' _{i-2}, L_0, L _{s_1}, L' _{i+1}, \ldots, L'
_{s_2}, \Sigma, r'),$$ 
  the pair $ \{j _{z} \}, \{j' _{z} \}$ will be called \emph{composable}.
For such a composable pair, analogously to the
definition of $St _{i} (\mathcal {A}, \mathcal
{A}', \tau)$,  we have an induced 
element:  $$St _{i} (
\{j _{z} \}, \{j' _{z} \}, \tau) \in \mathcal {J} (L'_0, \ldots, L'
_{i-2}, L_0, \ldots, L _{s_1}, L' _{i+2}, \ldots, L'
_{s_2}, \Sigma, St _{i} (r, r', \tau)),$$ for each $ 0 \leq \tau <1$.

\subsection{Combining admissible connections and
fiber almost complex structures}
\label{sec:combineConnectionsAlmostComplex}
\begin{definition} \label{def:natural} Let $M
   \hookrightarrow P \to X$  be as above.
 A \textbf{system} $\mathcal{F} = \mathcal{F} (P)
   $ of connections, and almost complex structures
   \textbf{\emph{relative}} to   
 a system $\mathcal{U} (X) $ is an element of  
 \begin{align*}
\prod _{\Sigma \in Simp (X) } \prod _{s \geq 2}  
\prod _{\{(L_0, \ldots , L_s) \vert L _{i} \in
    F (\Sigma) \}} \prod _{r \in
   \overline{ \mathcal{R}}_{s}}    \mathcal{T} (L _{0}, \ldots , L _{s}, \Sigma, r ) \times  \mathcal{J} (L _{0}, \ldots , L _{s}, \Sigma, r ),
\end{align*} 
(the system $\mathcal{U} (X) $ is implicit in the
   above.) 
 The projection of $\mathcal{F}$ onto the $(\Sigma
   ,s,  (L_0, \ldots , L_s), r
)$
component  will be
denoted by $\mathcal{F}   (L_0, \ldots , L_s,
   \Sigma,r  )$.   To phrase this   in functional
   language, 
let 
\begin{align*}
         O= \{(L_0, \ldots
  L _{s}, \Sigma, r) | s \in
\mathbb{N} _{\geq 2},  \Sigma
   \in Simp (X), 
    \\ L _{i}
   \in   F (\Sigma),  
   r \in \overline{\mathcal{R} } _{s}
       \},
   \end{align*}   
then set theoretically the product
above is the set of certain 
maps $\mathcal{F}$ with domain $O$.   Then in
this language $\mathcal{F}  (L_0, \ldots, 
L_s, n,r)$ is just the value $\mathcal{F} (L_0, \ldots, 
L_s, n,r), $  of the map $\mathcal{F} $. 
\end{definition}
Let $pr _{i}$, $i=1,2$, denote the projections 
\begin{align*}
   & pr_1: \mathcal{T} (L _{0}, \ldots , L _{s},
   \Sigma, r ) \times  \mathcal{J} (L _{0}, \ldots
   , L _{s}, \Sigma, r ) \to  \mathcal{T} (L _{0},
   \ldots , L _{s}, 
   \Sigma, r ) \\
    & pr_2: \mathcal{T} (L _{0}, \ldots , L _{s},
   \Sigma, r ) \times  \mathcal{J} (L _{0}, \ldots
   , L _{s}, \Sigma, r ) \to  \mathcal{J} (L _{0}, \ldots
   , L _{s}, \Sigma, r ).  
\end{align*}      
 For shorthand, in what follows, we say that a Hamiltonian connection $\mathcal{A}\in
\mathcal{F}$  if it is of the form $pr _{1}   \mathcal{F} (L_0, \ldots , L_s, \Sigma, r
)$, for some $(L _{0}, \ldots, L _{s}, \Sigma, r)
   $.

\begin{definition} \label{def:naturalF}
We say that $\mathcal{F}$, relative to a natural
   $\mathcal{U} (X) $, 
is \textbf{\emph{natural}} if:
\begin{enumerate}
   \item \label{axiom:connection1} The families of connections/almost complex structures are smooth in the
parameter $r$,  over smooth components   of
the surfaces $\mathcal{S} _{r}$. (At the nodes the behavior will be characterized the Axioms \ref{axiom:connection2}, \ref{axiom:connection3}, below.) 
\item  \label{axiom:connection2} For a composable pair $\mathcal{A}, \mathcal{A}' \in \mathcal{F}$ as above  the connection $St _{i} ( \mathcal
{A}, \mathcal {A}', 0)$ coincides with $$ pr _{1}  \mathcal {F}  (L'_0, \ldots, L'
_{i-2}, L_0, \ldots, L _{s_1}, L' _{i+1}, \ldots, L'
_{s_2}, \Sigma, St _{i} (r, r',0)).$$ 
  \item  \label{axiom:connection3}
   The pair of connections,  $$St _{i} ( \mathcal
{A}, \mathcal {A}', \tau),$$  $$ pr _{1}  \mathcal {F}  (L'_0, \ldots, L'
_{i-2}, L_0, \ldots, L _{s_1}, L' _{i+1}, \ldots, L'
_{s_2}, \Sigma, St _{i} (r, r', \tau)),$$
   also agree for all
$0 < \tau < 1$ on the ``thin region'' $thin
      _{\tau,i}$ of $ \mathcal {S}
_{ St _{i} (r, r',\tau)}$. 
\item  \label{property:natfacemap}
Given a morphism in $Simp (X) $,  $f: \Sigma
 _{1} ^{n} \to \Sigma_{2} ^{m}$,  by Axiom
   2 of naturality of $\mathcal{U} (X) $, 
the Hamiltonian bundle $\widetilde{
\mathcal {S}} (m_1, \ldots, m _{d}, \Sigma ^{n}_{1},
r) $ is expressed as a certain pull-back  of 
$\widetilde{ \mathcal {S}} (f(m_1), \ldots, f(m
   _{d}), \Sigma _{2} ^{m},r)$, where $f$ on the right denotes the corresponding
   simplicial map $f: \Delta^{n} \to \Delta^{m}
   $. 
So that there is a natural bundle map of  Hamiltonian $M$-fibrations $$p: \widetilde{
\mathcal {S}} (m_1, \ldots, m _{d}, \Sigma _{1}
^{n},  r) \to
\widetilde{ \mathcal {S}} (f(m_1), \ldots, f(m
   _{d}), \Sigma _{2} ^{m},r), $$  preserving the
   distinguished trivalization structure.
Then we ask that $$p ^{*}pr _{1} \mathcal {F} (L_0, \ldots, 
L_d, \Sigma _{2} ^{m},r) = pr _{1} \mathcal {F}
   (L_0, \ldots, L _{d}, \Sigma ^{n}_{1}, r).$$
\item There are analogous conditions on the families of almost complex
structures $pr _{2}   \mathcal{F} (L_0, \ldots , L_s, \Sigma,
r
)$ that we will not state.
\end{enumerate}
\begin{notation}
   We will sometimes write by abuse of notation $\mathcal{F} ( \ldots )$, for either the connection
 $pr _{1}  \mathcal{F} ( \ldots ) $, or the family of almost complex
 structures  $pr _{2}  \mathcal{F} ( \ldots ) $, since there usually can be no
 confusion.

\end{notation}
\end{definition}
 \begin{theorem} \label{lemma:naturalF}  A natural 
   system $\mathcal{F} $ relative to   
   any given natural system $\mathcal{U} (X)  $ exists.
\end{theorem}
\begin{proof}  

Restricting to a single $\Sigma: \Delta ^{0} \to X$  this is the classical Fukaya category case, and the
proof of existence of a natural system  is given in Seidel~\cite[Section
9i]{citeSeidelFukayacategoriesandPicard-Lefschetztheory} in the
language of what Seidel calls compatible system of
perturbations, which is completely analogous to
   the language  of connections used here. Although in Seidel's book only
the case of exact Lagrangians in exact
symplectic manifolds is considered, this readily
extends to our context, since we are not yet
concerned with compactness or regularity
 properties.

   In what follows, as usual, we write $\Sigma
   ^{n}$ for a degree $n$ 
element of $Simp (X) $, i.e. of the form $\Sigma
^{n}: \Delta^{n} \to X$.  We proceed by induction. 
Let $S (N) $ be the statement: 
there is an element 
\begin{align*}
 \mathcal{F} ^{N} \in \prod _{r \in
   \overline{ \mathcal{R}}_{s} }     \prod _{\{(L_0, \ldots , L_s) \vert L _{i} \in
    F (\Sigma ^{n} )\}}  \prod _{s \geq 2} \prod
   _{\{\Sigma ^{n} \vert n \leq
   N\}}  
    \mathcal{T} (L _{0}, \ldots , L _{s}, \Sigma
^{n}, r ) \times  \mathcal{J} (L _{0}, \ldots , L _{s}, \Sigma
^{n}, r )
\end{align*}
satisfying naturality condition of Definition
   \ref{def:naturalF}, where the fourth  axiom is
   only required to hold on $Simp ^{N} (X) $, the
   latter
   denoting the subcategory of simplices of degree
   up to $N$. $S (N) $ will also denote the
   corresponding system $\mathcal{F} ^{N}$.   
 
$S (0) $ is already explained above. We prove $$S (N) \implies S (N+1), $$ in addition
the corresponding system
$S (N+1) $ can be assumed to extend $S (N) $. 

Let $\Sigma ^{N+1}: \Delta^{N+1} \to X$ be given. 
Let $L _{0}, \ldots,  L _{s} \in F (\Sigma ^{N+1})
$, so that each $L _{i} \in F (x _{i}) $  for $x
_{i} = \Sigma ^{N+1} (v _{i}) $ for some vertex $v _{i}
\in \Delta^{N+1} $. 
In particular the set $\{L _{0}, \ldots,  L _{s}
\}$ determines the set of 
vertices $\{v _{0}, \ldots, v _{s} \}$ of
$\Delta^{N+1} $.    
Denote by $D (L_0, \ldots, L_s )$ the least dimension of a subsimplex of $\Delta
^{N+1} $ with vertices $\{v _{0}, \ldots, v _{s}\}$. 
Clearly, there is a unique extension of $\mathcal{F}$ to an element 
\begin{equation} \label{eqInducedF}
\begin{split}
 \mathcal{F} \in \prod _{r \in
   \overline{\mathcal{R}}_{s} } \prod _{\{(L_0, \ldots , L_s) \mid \, N
\geq D (L _{0} , \ldots, L_s)}  \prod _{s \geq 2} \prod _{\{\Sigma ^{n} \mid  n
\leq N +1\}}  \\
    \mathcal{T} (L _{0}, \ldots , L _{s}, \Sigma
^{n}, r ) \times  \mathcal{J} (L _{0}, \ldots , L _{s}, \Sigma
^{n}, r )
\end{split}
\end{equation}
satisfying the naturality condition.

We need to extend to the case $N+1
= D (L _{0} , \ldots, L_s)$ and 
so that naturality is satisfied.
For all $ (L_0 , \ldots, L_s)$ with $$D (L_0,\ldots, L_s) = N+1,$$ and given $\Sigma ^{N+1} $,  the naturality condition and $\mathcal{F}$ from \eqref{eqInducedF} determine $$ \mathcal {F} (L_0, \ldots, L _{s}, \Sigma^{N+1},r)$$ for
 $r$ in the boundary of $ \overline{ \mathcal {R}} _{s}$, see the discussion following \eqref{eq:subs}.

Set
$$\mathcal{P} := \bigcup _{r \in \overline{
   \mathcal{R}}
_{s}} \mathcal{T} (L _{0}, \ldots , L _{s}, \Sigma
^{n}, r ) \times  \mathcal{J} (L _{0}, \ldots , L _{s}, \Sigma
^{n}, r ). $$   So we have a natural fibration   $\mathcal{P} \to
\overline{\mathcal{R}} _{s}  $, with the fiber
over $r \in \overline{\mathcal{R}} _{s} $  denoted
by $\mathcal{P} _{r}$. 
The topology on $\mathcal{P} $ is the natural
metric ``Gromov topology'',
constructed using gluing operations of Sections
\ref{sec:gluingconnections},
\ref{sec:gluingalmostcomplex}.    We will only
describe this briefly. First,
constructing  
 a metric $d$  on
$\mathcal{P}| _{\mathcal{R} _{s}}$  can be reduced
to constructing a metric on the spaces of
connections/almost complex structures on a fixed
Hamiltonian fibration $M \hookrightarrow
\widetilde{S} \to S$, as $\mathcal{R} _{s}$
is contractible.  In other words it is enough
to construct $d$ on the fiber $\mathcal{P}
_{r}$, $r \in \mathcal{R} _{s}$.  
Since   $\mathcal{P} _{r}$  is naturally a Frechet
manifold we just suppose that $d$ on $\mathcal{P}
_{r}$ is the metric
inducing the corresponding topology. 
Given $\mathcal{S} _{r}$, $r \in \partial
\overline{\mathcal{R}} _{s} $  there is,
corresponding to each gluing parameter $0 <\tau
\leq 1$,  a ``glued'' non-nodal surface $$gl _{\tau}
(\mathcal{S} _{r}) \simeq \mathcal{S} _{gl _{\tau}
(r) \in \mathcal{R} _{s}}, \quad \text{$\simeq$ being
holomorphic isomorphism.}  $$   In other words
we glue at each node of $\mathcal{S} _{r}$ with
gluing 
parameter $\tau$.
Similarly, using the gluing
operations of Sections
\ref{sec:gluingconnections},
\ref{sec:gluingalmostcomplex}, given $e \in
\mathcal{P} _{r}$, $r \in \mathcal{R} _{s}$  there is
an element $gl
_{\tau} (e) \in \mathcal{P} _{gl _{\tau} (r)} $. 
Now, for $r _{1} \in \partial
\overline{\mathcal{R}} _{s} $,    $r _{2} \in
{\mathcal{R}} _{s} $  and for elements $e _{1} \in
\mathcal{P} _{r _{1}}, e _{2} \in \mathcal{P}
_{r _{2}}$  we define:
\begin{equation*}
   d (e _{1}, e _{2}) := \lim _{\tau \mapsto
   0} d (gl _{\tau} (e _{1}), e _{2}).  
\end{equation*}
Define this similarly in the case $r _{1}, r _{2}
\in \partial \overline{\mathcal{R} } _{s} $.

The fibers
of $\mathcal{P} $ are non-empty, the corresponding statement for
just connections follows by \cite[Lemma
3.2]{citeAkveldSalamonLoopsofLagrangiansubmanifoldsandpseudoholomorphicdiscs}.
The fibers are contractible, for the connection
component this is just because the relevant space
is naturally affine. For the almost complex
structure component, this is basically classical
by work of Gromov ~\cite{citeGromovPseudo}.
Moreover,   $\mathcal{P} $  is a Serre fibration,
this is only non-obvious at boundary points of
$\overline{\mathcal{R}} _{s}$, but there the
corresponding lifting property for cubes can be
easily verified
directly, again using the gluing operations of
Sections  \ref{sec:gluingconnections},
\ref{sec:gluingalmostcomplex}.

To summarize we have a Serre fibration
$\mathcal{P} \to \overline{\mathcal{R}} _{s}$ with non-empty
contractible fibers.   We have a section of $\mathcal{P} $ over
$\partial \overline{\mathcal{R}} _{s}$ 
corresponding to the partially constructed family $$ \{ \mathcal {F} (L_0, \ldots, L _{s},
 \Sigma^{N+1},r) \} _{r \in
 \partial {\overline{\mathcal{R}}} _{s}}$$ above.
By the classical obstruction theory, there
is an extension to a section $\zeta$   over $\overline{
   \mathcal {R}} _{s}$. We may need to
homotopically adjust the
section $\zeta$  to satisfy the Axiom \ref{axiom:connection3} of naturality, but
this is straightforward. So that this completes
the proof of the inductive step.  

By recursion, we may then define a sequence of
systems $\{S _{N}\} _{N \geq 0}$,  so that $S (N+1)  $
extends $S (N) $, for each $N$, 
we then
set $\mathcal{F} := \bigcup_{N} S (N). $ 
And this completes the proof.

\end{proof}
\subsection {The summary of the perturbation data
$\mathcal{D} (P)$}   \label{sec:summaryperturbation}
Let $M \hookrightarrow P \to X$ be as above. To summarize, the
perturbation data $\mathcal{D} = \mathcal{D} (P)  $ consists of a 
choice of a natural system
 $\mathcal{U} (X)$, and a choice of a natural system
$\mathcal{F} = \mathcal{F} (P)   $ of connections/almost complex
structures relative to $\mathcal{U} (X)  $.   
\begin{theorem}
   \label{thm:concordanceMain}
Any pair $\mathcal{D} _{0} (P) , \mathcal{D} _{1}
   (P) $  of
perturbation data are concordant. Concordant
   means that
   there is data $\widetilde{\mathcal{D}} (I
   \times P)  $, for
   $P \times I$ the pull-back of $P$ by the
   projection  $X \times I \to X$, so that
   $\widetilde{\mathcal{D} } (I \times P)  $
   restricted over $X \times \{0\}$ is
   $\mathcal{D} _{0} $ and restricted over $X
   \times \{1\}$    is $\mathcal{D} _{1}$.
   (Interpreted naturally.) 
\end{theorem}
\begin{proof}
   Theorem \ref{thm:naturaltargetdependent} tells
   us that $\mathcal{U} _{0} (X), \, \mathcal{U} _{1}
   (X)   $ are concordant,  where the latter
   are the systems corresponding to
   $\mathcal{D} _{0}, \mathcal{D} _{1}$. Let
   $\widetilde{\mathcal{U}} (X \times I)  $    denote the
   corresponding system. 
The proof of Theorem \ref{lemma:naturalF} then
   readily gives a natural system
   $\widetilde{\mathcal{F}} (P \times I) $, relative to
   $\widetilde{\mathcal{U} } (X \times I)  $,
   restricting to  $\mathcal{F}
   _{1} (P), $ on $P
   \times \{0\}$, respectively to $\mathcal{F}
   _{2} (P)$ on $P \times \{1\}$.
   Here $\mathcal{F}
   _{1} (P), \, \mathcal{F} _{2} (P) $  correspond
   to $\mathcal{D} _{1} (P) , \mathcal{D} _{2} (P) $. 
\end{proof}

\section {The functor $F$} \label{section:functor} 
 Let $A_{\infty}-Cat$ denote the
 category of small $\mathbb{Z} _{2} $ graded $A _{\infty}$ categories over
 $\mathbb{Q}$,  with morphisms fully-faithful
 embeddings, as defined below, that are in addition quasi-equivalences. 

\begin{definition} \label{defFullyfaithful} We say that an $A _{\infty} $
functor $G$ is a
\emph {\textbf{fully-faithful embedding}}, if $G$ has vanishing higher order components, is
injective on objects and if the first component map on hom spaces is an isomorphism
of chain complexes. In other words $G$ above is just an identification map of
a full $A _{\infty} $ sub-category.
\end{definition}

 We now describe the construction of the functor $$F _{P, \mathcal {D}}: Simp(X) \to A_{\infty}-Cat$$
 associated to a Hamiltonian fibration $P$  and the chosen data $
 \mathcal {D}$, described in the previous section.
 In what follows we usually drop $ \mathcal {D}$
 and $P$ from notation. For a point $x: pt \to X $
 the associated category will be constructed
 following Sheridan~\cite{citeNickSheridanOntheFukayaCategory}.
 In fact the analysis does not change for the case
 of higher dimensional simplices $\Delta^{n} \to
 X$, the geometry however needs to be
 substantially generalized.

\subsection {$F$ on a point} \label{section:Fonpoint} 
For $x: pt \to X$, $F (x)$ is defined to be a
certain 
Fukaya type $A _{\infty}$ category, 
whose set of objects is the set $F (x) $ discussed
in Section \ref{section:dataD}, cf. \eqref{eq:Fx}.

For a pair $L _{0}, L _{1} \in F (x) $, with $\omega (L _{0} ) \neq \omega (L
_{1} )$ we set $hom (L _{0}, L _{1}   ) =0$, (to avoid dealing with curved $A
_{\infty} $ categories), otherwise we set $$hom
(L_0, L_1) = CF ({L _{0}, L
_{1}}, \mathcal{D}),$$ where the latter is a $\mathbb{Z} _{2} $ graded Floer chain complex
over $ \mathbb{Q}$ that is defined as follows.

Let $ \mathcal {A} ({L_0, L_1})$ be the Hamiltonian connection on
$P _{x} \times [0,1]$ determined by the chosen data $\mathcal{D}$, and likewise let $j (L
_{0}, L _{1}  )$ to be the family of almost complex structures determined by
$\mathcal{D}$.

Then $CF ({L _{0}, L
_{1}}, \mathcal{D})$
is the vector space over $\mathbb{Q} $,
freely generated
by elements of $S (L _{0}, L _{1}) $, where the
latter is as in Definition \ref{notation:SL0L1}.   
To quickly recall, $S (L _{0}, L _{1}) $ is the
space of  $\mathcal {A} ({L_0, L_1})$-flat
sections $\gamma$ of $P _{x} \times [0,1]$, with boundary on the pair of Langrangians $$L_0 \subset P _{x} \times
\{0\},  L_1 \subset P _{x} \times \{1\}.$$  
  
These $\gamma$ are called \textbf{\emph{geometric
generators}}. To relate this with more classical
Lagrangian Floer homology generators,  we point out that
there is a natural set isomorphism:
\begin{equation*}
  \phi:   S
(L _{0}, L _{1}) \to  (\mathcal{A} (L _{0}, L _{1}) L _{0}) \cap L
_{1}
\end{equation*}
where $\mathcal{A} (L _{0}, L _{1}) L
_{0}$ is as in the paragraph prior to the
Definition \ref{notation:SL0L1}.     
The map $\phi$ is given by $$\phi (\gamma) =
\gamma (1).  $$ 

Then the $\mathbb{Z} _{2} $ grading of a generator
$\gamma \in S (L _{0}, L _{1}) $ is given 
by the sign of the intersection point $\phi
(\gamma ) $. 
 \subsubsection {Differential on $CF ({L _{0}, L
_{1}}, \mathcal{D})$}
For $\gamma _{0}, \gamma _{1}$ geometric generators of
$$ CF (L _{0}, L _{1}, \mathcal{D} ),  $$ 
let $ {\mathcal{M}} (\gamma _{0}; \gamma _{1}  )$
denote the space of holomorphic (to be further
explained)  sections of $$ ([0,1] \times
\mathbb{R}) \times P
_{x} \to [0,1]
\times \mathbb{R},$$  with boundary on the
Lagrangian sub-bundles $$\{0\} \times \mathbb{R} \times L_0
  \to \mathbb{R},  \{1\} \times
\mathbb{R} \times L
_{1} \to \mathbb{R} ^{},
$$ and asymptotic to $\gamma _{0} $,
respectively to $\gamma _{1}$,  at the $\infty$, respectively $- \infty$ ends. 
Here, {\emph{asymptotic}}  means that $$\lim _{s
   \mapsto \infty}  {\sigma| _{[0,1] \times
   \{s\} }} = \gamma _{0}, $$ 
and $$\lim _{s
   \mapsto -\infty}  {\sigma| _{[0,1] \times
   \{s\} }} = \gamma _{1}, $$ 
where the limit
   is $C ^{\infty}$ limit.
And let $\overline {\mathcal{M}} (\gamma _{0}; \gamma _{1}  )$
denote the natural Gromov-Floer compactification of the quotient ${\mathcal{M}} (\gamma _{0}; \gamma _{1}  )/\mathbb{R}$, where $\mathbb{R} $ acts by translation on the domain.

\begin{terminology}
   \label{terminology:holomorphic} Here and elsewhere  the term \emph {
\textbf{holomorphic section}} of various Hamiltonian fibrations over Riemann
surfaces $S$ will mean the following. Our Hamiltonian
   fibrations $\widetilde{S} \to S$ always come with choices
of 
 a Hamiltonian connection $\mathcal{A}$, and a family of
 fiber-wise almost complex structures $\{j _{z} \}
   _{z \in S}
 $,  determined by the perturbation data
 $\mathcal{D}$. This gives an induced almost complex structure
  $J ( \mathcal {A}, \{j _{z} \} )$ on $\widetilde{S} $  restricting to $\{j _{z} \}$ on the fibers,  having a holomorphic projection map to the base, and preserving the horizontal distribution of $ \mathcal {A}$.
Holomorphic then means that the section has $J (
   \mathcal {A}, \{j _{z} \} )$-complex linear
   differential. 
\end{terminology} 
In the above case,  let $\mathcal {A} (L _{0}, L _{1}, \overline{m}  _{0})$,
${j}  (L _{0}, L _{1}, \overline{m}  _{0})$ be part of our
data $\mathcal{D} $, where 
$\overline{m}  _{0} = x \circ m _{0}$,  
$m _{0}: [0,1]  \to \Delta^{0}
$, and so $\overline{m}  _{0}: [0,1] \to X$ is the constant map
to $x$.  
Then
``holomorphic'' is with respect
to the almost complex structure induced by: 
$$\widetilde{\mathcal{A}} (L _{0}, L _{1}) :=  pr
^{*} \mathcal {A} (L _{0}, L _{1}, \overline{m}
_{0}), \quad
\widetilde{j}  (L _{0}, L _{1}):=pr ^{*}j (L
_{0}, L _{1}, \overline{m}  _{0})$$ for   
$$pr: ([0,1] \times
\mathbb{R}) \times P
_{x} \to [0,1]
\times P _{x} $$ the projection.

For a generic pair  $\mathcal{A} (L_0, L _{1} ), j (L _{0}, L _{1}  )$,  all the  moduli spaces $\overline {\mathcal{M}} (\gamma _{0}; \gamma _{1}  )$ are transversely cut out for all $\gamma _{i} $,
~\cite{citeNickSheridanOntheFukayaCategory} but
these kinds of transversality results go much
further back,
see
for
example Oh~\cite{citeOhFloerCohLagrangianIntersections}.

 The differential   $$\mu ^{1}: CF ( L _{0}, L _{1},
 \mathcal{D} )  \to CF ( L _{0}, L _{1},
 \mathcal{D} ) $$
is  defined as usual by $$\mu ^{1} (\gamma _{i} ) = \sum _{i} \# 
\overline{\mathcal{M}} (\gamma _{i}  ; \gamma _{j}  ) \gamma _{j}   ,$$ 
for $\{\gamma _{i} \}$ a basis of geometric generators for $CF ( L _{0}, L _{1},
 \mathcal{D} )$. Here $\# 
\overline{\mathcal{M}} (\gamma _{i}  ; \gamma _{j}
)$ is defined to be zero, unless the virtual
dimension of $\overline{\mathcal{M}} (\gamma _{i}
; \gamma _{j}  )$ is zero and in that case it is the signed count of points. The sum is finite by the monotonicity condition.
\subsubsection {Section classes}
\label{sec:SectionClasses}
Let $M \hookrightarrow \widetilde{S} \to S$ be a Hamiltonian
fibration over a Riemann surface with boundary
and end structure $\{e _{i}\} _{i=0} ^{i=d}$.
Suppose we have distinguished 
trivializations $\widetilde{e}  _{i}: [0,1] \times (0, \infty)
\times M \to \widetilde{S} $,   over $e _{i}: [0,1] \times (0,\infty) \to S
$, $0< i \leq d$. And $\widetilde{e}  _{0}: [0,1]
\times (-\infty, 0)
\times M \to \widetilde{S} $,  over $e _{0}$.
And
suppose we have a Lagrangian sub-fibration
$\mathcal{L}$  over the
components of the boundary $\partial S$, analogous
to the sub-fibrations \eqref{eq:subfib}. Let
$\sigma $ be a section of $\widetilde{S} $, so
that $\sigma (\partial S) \in  \mathcal{L}  $.
Suppose in addition that $\sigma$ is continuous and is $C ^{0}$
asymptotic at each end, to a section $\widetilde{\sigma } $  
which is translation invariant in the $(0, \infty)
$  factor (in the distinguished trivialization).
Here asymptotic just means $C ^{0}$
convergence in the distinguished trivialization:
$$\lim _{s \to \infty} \sigma |
_{[0,1] \times \{s\} } = \gamma,$$ for $\gamma =
\widetilde{\sigma }| _{[0,1] \times s}
$. As $\widetilde{\sigma } $ is translation
invariant,  the right-hand side is well-defined.   
In this case, as may be apparent, we may define the homology
class of $\sigma $, relative to the boundary and
relative to the asymptotic constraints.  

The above
extends to the case ${S}$  is disconnected, of the form
$\mathcal{S} _{r} $ for $r \in \partial
\mathcal{R} _{s}$. In this case we ask that our
sections $\sigma $ also have matching asymptotic
constraints, at the corresponding nodal ends $n
_{j, \pm} $, see Section
\ref{sec:preliminariesRiemannSurfaces}.         
Given this, we may again define a relative homology
class of $\sigma $.
We will
not give exhaustive detail of this, as this is
very standard. We just have a language change,
instead of maps of surfaces to a manifold $M$, we have
sections of $M$-fibrations over surfaces.   Let us denote such
relative homology
classes by letters $A$. 

\subsubsection {Higher multiplication maps}
\label{sec:multiplicationMapsPoint}
The  multiplication maps
\begin{equation}  \label {eq.mult}
\begin{split} \mu ^{d}: hom  (L_0, L_1) \otimes hom  (L_1, L_2)
\otimes \ldots \\ \otimes hom (L _{d-1}, L _{d}) \to hom (L_0, L_d),
\end{split}
\end{equation}
$d>1$ are defined as follows.
\begin{notation}
   \label{notation:generators} In the rest of the
   paper we
   use the notation $\{\gamma ^{j} _{i}\} _{i \in
   I ^{j}} \in CF (L
   _{j-1}, L _{j}) $ for the basis of geometric
   generators. (The set $I ^{j}$ will usually be omitted
   from notation.) 
   So the superscript in this notation refers to the
   vector space.  Similarly, $\gamma ^{0} _{i} \in CF (L
   _{0}, L _{d})$ will likewise denote the
   generators.  If the subscript is not specified
   then we just mean general geometric generator.
\end{notation}

For generators $\gamma ^{j} \in CF (L
_{j-1}, L _{j}, \mathcal{D} ) $, $1 \leq j \leq d$, $\gamma ^{0} \in CF (L _{0}, L _{d}, \mathcal{D}) $,  we define the  moduli space 
\begin{equation} \label{eq:modulispacepoint}
\mathcal {M} (\{\gamma ^{j} \}; \gamma ^{0},   x,
   \mathcal{D},  A) 
\end{equation} 
as follows.
The  elements  are pairs $(\sigma, r) $, for
$\sigma $ a relative class
$A$ (to be further explained),   $\mathcal
{F}  (\{L _{i} \},  x, r)$-holomorphic (cf.
Terminology \ref{terminology:holomorphic}) section of
the trivial fibration
$$ {\mathcal{S}}_{r} \times P _{x} \to \mathcal{S}
_{r},
$$ 
where $r \in {\mathcal{R}} _{d}  $, $\mathcal{F} $
is the system determined by $\mathcal{D} $. 
And s.t. each pair $(\sigma,r) $ satisfies:
\begin{itemize}
\item 
   $\sigma (\partial \mathcal{S} _{r})  
   \subset {\mathcal{L}}  (\mathcal{U},  L _{0}, \ldots , L _{d}, r  ), $
 see \eqref{eq:subfib}.
 \item By assumptions, 
at each $e _{i}$ end of $ \mathcal {S} _{r}$, $i \neq
0$, in the distinguished coordinates $$ [0,1]
      \times  (0, \infty) \times (M \simeq P
_{x})   \to 
\widetilde{\mathcal {S}} _{r},$$ the data $\mathcal
{F}  (\{L _{j} \},  \Sigma, r)$ is $
\mathbb{R}$-translation invariant in the $(0,
\infty) $  factor. Then we ask that
$\sigma$ be asymptotic  to
   $\gamma ^{j} $. Here, \textbf{\emph{asymptotic}}  means that $$\lim _{s
   \mapsto \infty}  {\sigma| _{[0,1] \times
   \{s\} }} = \gamma ^{i}, $$ where the limit
   is $C ^{\infty}$ limit.       Likewise, in the distinguished coordinates 
 \begin{equation*}
      [0,1]
      \times  (-\infty, 0) \times (M \simeq P
_{x})  \to 
\widetilde{\mathcal {S}} _{r},
   \end{equation*} at the $e _{0}$  end,
we ask that $\sigma $ be
asymptotic to $\gamma ^{0} $.
\item The pair of the conditions  above mean that
$\sigma $ determines a relative homology class,
as in Section \ref{sec:SectionClasses}, and we ask
that all the $\sigma $  are in the same class $A$.
\end {itemize}

Given geometric generators $ \{\gamma
^{j} \in \hom _{F
(x)}(L _{j-1}, L _{j}  ) \} $, $1 \leq j \leq d$, $d
\geq 2$,  and a geometric generator $ \gamma ^{0}
\in \hom _{F
(x)}(L _{0}, L _{d}  ) $,
assuming that $\mathcal
{F}  (\{L _{j} \},  x,r)$ is regular 
we define $\mu ^{d} $ by duality as:
\begin{equation} \label{eq:mud1}
\langle \mu ^{d} (\gamma ^{1}, \ldots, \gamma ^{d}    ), \gamma ^{0}   \rangle = \sum _{A} \# \mathcal {M} (\gamma ^{1}, \ldots, \gamma ^{d}; \gamma ^{0},  x, \mathcal{D}, A),
\end{equation}
when the above moduli spaces  have dimension 0, for
$ \langle ,  \rangle $ the natural inner
product pairing induced by our choice of basis
(consisting of geometric generators).
The sum is finite by monotonicity.

 \subsubsection {Compactification  regularity, and associativity} \label{section:regularitycompactness} 
Given a certain dictionary,
the moduli spaces $$ \mathcal {M} (\{\gamma ^{i} \}; \gamma ^{0},   x, \mathcal{D},  A) 
 $$  are  identical to the moduli spaces in Sheridan
\cite{citeNickSheridanOntheFukayaCategory}, with
respect to a system, determined by
$\mathcal{F}$, of Hamiltonian perturbations.

To be more explicit, a Hamiltonian connection on a  trivial $M$ bundle over a surface
$S$ is the same as the data of a 1-form on $S$ with values in $C ^{\infty} _{0}
(M) $: smooth functions with mean 0. This is the same as the data of a Hamiltonian perturbation. So in our case we just have a language change, the reason for which will be obvious when we shall construct the value of $F$ on higher dimensional simplices of $X$.
Consequently the compactification and regularity story is word for word
identical to  Sheridan
\cite{citeNickSheridanOntheFukayaCategory}.
(Again, given the right dictionary.) 
We say a bit more about compactification.
The compactification
$$\overline{ \mathcal {M}} (\{\gamma ^{i} \}; \gamma ^{0},   x, \mathcal{D},  A) 
,$$ 
is obtained by allowing $r \in \overline{\mathcal{R}} _{d}$
and allowing broken holomorphic sections over
the disconnected surfaces $\mathcal{S} _{r}$,
$r \in \partial \overline{ \mathcal{R}} _{d}$.
(This is in addition to the usual stable map
compactification.)  
A broken holomorphic section of $\widetilde{\mathcal{S} } _{r} \to \mathcal{S} _{r} $ is a holomorphic
section over each smooth component of $\mathcal{S}
_{r}$, so that at the node ends $n _{j, \pm}$, the
corresponding sections are asymptotic to the same
geometric generator $\gamma $. (In the natural
bundle trivializations at the ends.) 

\subsubsection {$A _{\infty} $ associativity} 
The maps  $\mu ^{d} $ satisfy the $A
_{\infty}$-associativity equations (stated over $\mathbb{F} _{2} $ for
simplicity)
\begin{equation} \label {eq.Ainfty} \sum _{n,m} \mu ^{d-m+1}  (\gamma
^{1}, \ldots, \gamma ^{n}, \mu ^{m}  (\gamma ^{n+1}, \ldots, \gamma ^{m+n}), \gamma
^{n+m+1}, \ldots ,\gamma ^{d}) = 0, 
\end{equation}
This is shown as usual by considering the boundary of
the one dimensional moduli spaces, of the form:
$ \overline{ \mathcal {M}} (\{\gamma ^{i} \};
\gamma ^{0},   x, \mathcal{F}, A) $.
\subsection {$F$ on higher dimensional simplices}
Let $\Sigma:
\Delta ^{n} \to X$ be smooth. The
category $F (\Sigma)$  will have objects
$\bigsqcup _{i} \obj F ({x}_i)$, where $x _{i}: pt
\to X $ is as before, the composition of the
$i$th vertex inclusion $\Delta ^{0} \to \Delta ^{n}  $, with the map $\Sigma$.
We also write $x _{i} $ for $x _{i} (pt) \in X $.

Let $$m: [0,1] \to \Delta ^{n}$$ be the edge between $i,j$ corners of $\Delta ^{n} $ and set $$
\overline{m} = \Sigma \circ m.
$$ Given a pair of objects  $L _{0} \in F ({x
_{i} }) \subset F (\Sigma ), L _{1}   \in  F (x
_{j})  \subset F (\Sigma )  $, (including $i=j$) and given the Hamiltonian connection $$ \mathcal{A} (L _{0}, L
_{1} ) =
\mathcal {A} (L _{0}, L _{1}, \overline{m} )$$ on $
\overline{m} ^{*} P$, determined by $\mathcal{D}$, we define as before
$hom _{F (\Sigma)}(L _{0}, L _{1})$  to be the $\mathbb{Z} _{2} $ graded chain complex over $\mathbb{Q}$ generated by
the elements of $S (L _{0}, L _{1}, \mathcal {A}(L
_{0}, L _{1})) $, cf. Definition
\ref{notation:SL0L1}. The grading is defined as
before.

The differential $\mu ^{1}$ is defined identically to the
differential on morphism spaces of categories $Fuk(P _{x})$.
The only difference is that $\overline {m} ^{*} P $ may no longer be naturally
trivialized. 

This completely describes all objects and morphisms of $F (\Sigma)$.
We now need to describe
the $A _{\infty}$ structure. 
Given $\{L _{\rho (k)} \in F (x _{\rho (k)})\} _{k=0} ^{k=d}  $, $$\rho: \{0,
\ldots, d\} \to \{0, \ldots,n \}, $$ with $\omega (L _{\rho ({k}) } )=
\theta \in \mathbb{Z}$,
we  need to define the
higher composition maps
\begin{equation} \label {eq.comp} \mu ^{d} _{\Sigma}: hom (L _{\rho (0)} , L
   _{\rho (1)} ) \otimes
\ldots \otimes hom (L _{\rho (d-1)} , L_{\rho (d)}) \to hom (L _{\rho (0)} , L
_{\rho (d)} ).
\end{equation}

Note that by construction, to each morphism of $F (\Sigma)$ naturally corresponds
either an edge or a vertex of $\Delta ^{n}$, in either case we may naturally associate to
these a morphism in the groupoid $\Pi (\Delta ^{n})$.
The collection $ \{x_{\rho(k)}\}$ then clearly determines a composable chain $(m_1, \ldots, m_d)$
of morphisms in $\Pi (\Delta ^{n})$.

So let $\gamma ^{j} \in hom _{F (\Sigma ) } (L
_{\rho(j-1)}, L _{\rho(j)}) $, $1 \leq j \leq d$,
$\gamma ^{0} \in hom _{F (\Sigma )} (L _{\rho(0)}, L
_{\rho(d)}) $, be geometric generators. 
Given the map $$u (m_1, \ldots, m_d, \Sigma):
{\mathcal {S}} _{d} ^{\circ}  \to \Delta ^{n} ,$$   
\text{ that is part of a natural system
$\mathcal{U} (X) $ determined by $\mathcal{D} $ } and given the system $\mathcal{F}$ determined by $\mathcal{D}$,
we define the moduli space $$ { \mathcal {M}}
(\{\gamma ^{j} \}; \gamma ^{0},   \Sigma, 
\mathcal{D}, A) $$ analogously to \eqref{eq:modulispacepoint}. 
The elements of this moduli space are pairs 
 $(\sigma,r)$, $r \in {\mathcal{R}} _{d}  $,   and
 $\sigma$ a class $A$ (to be further explained),   
 $\mathcal
{F}  (\{L _{\rho (j) } \},  \Sigma,r)$-holomorphic section
of
$$ \widetilde{S} _{r} =  \widetilde{S} (m_1, \ldots,
m_d, \Sigma ,r) \to \mathcal{S} _{r}. $$  
In addition each pair $(\sigma,r) $  satisfies:
\begin {itemize}
 
\item $\sigma (\partial \mathcal{S} _{r})  
   \subset {\mathcal{L}}  (\mathcal{U},  L
   _{\rho(0)}, \ldots , L _{\rho(d)}, r  ) $,
 see \eqref{eq:subfib}.
Recall that the right-hand side is a sub-fibration of $\widetilde{S} _{r}   $ over the boundary of $\mathcal{S}_{r} $.
\item By assumptions, 
at the $i$'th end of $ \mathcal {S} _{r}$, $i \neq
0$, in the distinguished coordinates $$  (0,
\infty) \times \overline{m}
_{i} ^{*}P    \to 
\widetilde{\mathcal {S}} _{r},$$ the data $\mathcal
{F}  (\{L _{\rho (j) } \},  \Sigma, r)$ is $
\mathbb{R}$-translation invariant in the $(0,
\infty) $ factor.
Then we ask that
$\sigma$ be asymptotic  to
   $\gamma ^{j} $ a geometric generator of $$\hom
   _{F (\Sigma )}  (L _{\rho(j -1) }, L _{\rho(j)}
   ),$$ where asymptotic is as in   Section
   \ref{sec:multiplicationMapsPoint}.   Likewise, in the distinguished coordinates 
 \begin{equation*}
     (-\infty, 0) \times  \overline{m}
    _{0} ^{*}P   \to 
\widetilde{\mathcal {S}} _{r},
   \end{equation*}
we ask that $\sigma $ be
asymptotic to $\gamma ^{0} $ a geometric
generator of $\hom _{F (\Sigma )}  (L _{\rho(0)},
L _{\rho(d)})$.
\item The pair of the conditions  above mean that
$\sigma $ determines a relative homology class,
as in Section \ref{sec:SectionClasses}, and we ask
that all the $\sigma $  are in the same class $A$.
\end {itemize}   
\subsubsection {Compactness and regularity} We do not need to reinvent the wheel
proving compactness and regularity results for the above moduli spaces.
(Although it obviously works the same way.)
Instead pick a Hamiltonian trivialization of $$M \times \Delta ^{n} \xrightarrow{tr} \Sigma ^{*} P, $$
then using this our system $\mathcal
{F}$ can be made to correspond to a system
compatible perturbations, in the sense of 
Seidel~\cite[Section
9i]{citeSeidelFukayacategoriesandPicard-Lefschetztheory},
and
Sheridan~\cite{citeNickSheridanOntheFukayaCategory}. Since
as previously mentioned in Section
\ref{section:regularitycompactness}, for a trivial Hamiltonian
$M$-fibration over a surface the data of a
Hamiltonian connection (of the type that appears
in our context) is equivalent to the data
of a Hamiltonian perturbation.   
Consequently, compactness and
regularity works the same way as described in Section
\ref{section:regularitycompactness},
which is
based on
the work of Sheridan~\cite{citeNickSheridanOntheFukayaCategory}.
We do not give extensive detail as this is likely
fairly evident.

\subsubsection {Composition maps in the $A
_{\infty}$ category $F (\Sigma ) $ }
\label{section.degenerate}  For $\{L _{\rho (j)
}\}$, as above,  given geometric generators $
\gamma ^{j}  \in \hom _{F
(x)}(L _{\rho(j-1)}, L _{\rho(j)}  ) $, $1 \leq j \leq d$, $d
\geq 2$, and a geometric generator  
$ \gamma ^{0}  \in \hom _{F
(x)}(L _{\rho(0)}, L _{\rho(d)}  ) $,
assuming that $\mathcal
{F}  (\{L _{\rho(j)} \},  \Sigma,r)$ is regular 
we define $\mu ^{d} _{F (\Sigma ) } (\gamma
^{1}, \ldots, \gamma ^{d})  $ by the pairing:
\begin{equation} \label{eq:mud2}
\langle \mu ^{d} _{F (\Sigma ) } (\gamma
^{1}, \ldots, \gamma ^{d}),
\gamma ^{0}   \rangle = \sum _{A} \# \mathcal {M} (\gamma ^{1}, \ldots , \gamma ^{d}; \gamma ^{0},  \Sigma, \mathcal{D}, A),
\end{equation}
when the above moduli spaces are of dimension 0, for
$ \langle ,  \rangle $ as before the inner product
pairing induced by our basis choice. Again the sum
is finite by the monotonicity.

\subsubsection {Associativity} This works as before.
 \begin{lemma} \label{lemma:functorialF} The assignment $\Sigma
\mapsto F (\Sigma)$  extends to a natural functor $$F: Simp (X) \to
A_{\infty}-Cat.
$$ 
\end{lemma}
\begin{proof} Given a face map $f: \Delta ^{n-1} \to \Delta ^{n}$ and 
$\Sigma^{n}: \Delta ^{n} \to X$,  by the
   naturality Axiom \ref{property:natfacemap} of our
connections there is a canonical functor $F (\Sigma ^{n} \circ f) \to F
(\Sigma ^{n})$ that is by construction  a fully-faithful embedding. 
It follows via iteration that a morphism $\sigma: \Sigma ^{k} \to \Sigma ^{l}$, with
$\Sigma ^{k}, \Sigma ^{l} \in Simp (X)$, $k<l$ induces a fully-faithful
embedding: $$F (\sigma): F (\Sigma ^{k}) \to F (\Sigma ^{l}),$$ and this assignment is clearly
functorial. Note that $F (\sigma)$ is essentially surjective on the cohomological level,
which follows by a classical continuation argument, cf. \cite[Section
10a]{citeSeidelFukayacategoriesandPicard-Lefschetztheory}, and so each $F (\sigma)$ is a
quasi-equivalence. 
\end{proof}
Let us call the functor $F _{P, \mathcal{D}}: Simp (X) \to A _{\infty}-Cat  $, as
constructed geometrically in
this section, a \emph{geometric functor} to emphasize the origin.
\subsection {Unital replacement of $F$} \label{sectionReplacement}
Let $A _{\infty} -Cat ^{unit}$ denote the subcategory of $A _{\infty}-Cat $
consisting of strictly unital $A _{\infty} $ categories and unital functors. 
By \emph{unital replacement} for $$F: Simp (X) \to A _{\infty}-Cat $$ we mean a
functor  $$ {F} ^{unit} : Simp (X) \to A _{\infty} -Cat ^{unit}$$ together with a
natural transformation $$N: F \to {F} ^{unit}, $$ which is object-wise
quasi-equivalence. 
\begin{lemma} \label{lemma:unitalreplacement} Any functor $F: Simp (X) \to A
_{\infty}- Cat$ has a unital replacement.
\end{lemma}
\begin{proof} 
To obtain this we proceed inductively: for each 0-simplex $x \in Simp  
(X)$, since each $F (x)$ is c-unital we may fix a formal diffeomorphism $\Phi
_{x}: F (x) \to F (x)$, with first component maps
   $\Phi ^{1} _{x}  $ the identity maps, such that
   the induced $A_\infty$-structure
   $$ {F} ^{unit}  (x) = (\Phi_x)_* (F (x))$$ is strictly unital, \cite[Lemma
2.1]{citeSeidelFukayacategoriesandPicard-Lefschetztheory}. Let $$N _{{x}}:
  {F} (x) \to  {F} ^{unit}  (x)$$
  denote the induced $A _{\infty}$ functor.  Let $F _{k} $ denote the 
   restriction of $F$ to $Simp ^{\leq k}  (X)$ with $Simp ^{\leq k} (X)$ denoting the sub-category of $ Simp (X)$, consisting
of simplices whose degree is at most $k$.
And 
suppose that the maps $N_{{x}}$ can be extended to a natural transformation $N
_{k}: F _{k} \to F ^{unit}_{k} $ of functors $$F _{k}: Simp ^{ \leq k} (X) \to A _{\infty} -Cat,$$ $${F}
   ^{unit} _{k} : Simp ^{\leq k} (X) \to A _{\infty} -Cat ^{unit},$$ 
$k>0$ with the following property.   $$\forall
   \Sigma:  N _{k} (\Sigma):  {F}  (\Sigma)
\to  {F}  ^{unit}   (\Sigma)$$ is induced by a formal diffeomorphism $\Phi
_{\Sigma}: F (\Sigma) \to F (\Sigma) $, whose first component maps are the
identity maps.

We construct  an extension $N _{k+1}$.
For each given 
$\Sigma ^{k+1}: \Delta ^{k+1} \to X$ and $i: \Sigma ^{k} \to \Sigma ^{k+1}  $,
a morphism in $Simp (X)$,
by assumption $F (i) $ is a fully-faithful embedding. Identifying 
$F  (\Sigma ^{k} ) $ with a full subcategory of $F (\Sigma ^{k+1} )$,
we may clearly construct, as in the proof of \cite[Lemma
2.1]{citeSeidelFukayacategoriesandPicard-Lefschetztheory}, a formal diffeomorphism $$\Phi _{\Sigma ^{k+1}}: F
(\Sigma ^{k+1} ) \to F (\Sigma ^{k+1} )$$ 
with $\Phi ^{*} _{\Sigma ^{k+1} }  (\Sigma ^{k+1}   ) $ unital, 
and so that its restriction to $F (\Sigma ^{k} )$ coincides with the formal diffeomorphisms
$\{\Phi _{\Sigma ^{k+1} \circ i} \}$, for each $i: \Sigma ^{k} \to \Sigma ^{k+1}
$.
The result then follows.  
 \end {proof} 
Let us write $F ^{unit}  $ for the particular unital
replacement of $F $
as constructed in the proof of the lemma above.

\subsection {Concordance classes of functors $F: Simp (X) \to A _{\infty}-Cat$}

We say that a pair of functors $F_0, F_1: Simp (X) \to
A_{\infty}-Cat $ are
\emph{concordant} if there is a functor $$T: Simp (X \times I) \to
A_{\infty}-Cat  ,$$ restricting to $F _{0},F _{1}  $ over $Simp (X \times \{0\})$,
respectively over $Simp (X \times \{1\})$.
Note that
 by the proof of Lemma \ref{lemma:unitalreplacement} if $F _{1}, F _{2}  $ are
 concordant then so are $F _{1} ^{unit}, F _{2} ^{unit}    $.
 
 \begin{theorem} \label{thm.concordance} Let $M
    \hookrightarrow P \to X$ be a smooth Hamiltonian
    fibration. For a given pair of data $\mathcal{D}
    _{1}, \mathcal{D} _{2}  $ for $P$, the 
    functors $$F _{P, \mathcal {D} _{1} }: Simp (X) \to A_{\infty}-Cat, $$
    $$F _{P, \mathcal {D} _{2}}: Simp (X) \to A_{\infty}-Cat $$ are concordant.
 \end{theorem}
\begin{proof}  
The pair $\mathcal{D} _{1}, \mathcal{D} _{2}$  are
concordant by Theorem \ref{thm:concordanceMain}.
   Let $\widetilde{\mathcal{D} } (P \times I)  $
   denote the corresponding data. Then clearly 
$$F _{P \times I, \widetilde{\mathcal{D} } (P
   \times I)}: Simp (X) \to A_{\infty}-Cat, $$
gives the required concordance.   

\end{proof}
\begin{remark} 
\label{sub:homotopy_groups}
Concordance
relation is an equivalence relation (in the special case above). Although we will not show this here. The concordance
class of the functor $F _{P, \mathcal{D}} $ is
   then the most fundamental invariant of the
   Hamiltonian fibration $P$ that is constructed in this paper, however calculating with it may be very
difficult.
\end{remark}

\section {Global Fukaya category} \label{section.GF}
Let $M \hookrightarrow P \to X$ be as previously. In
this section, we will associate to the
previously constructed functors
$F _{P, \mathcal{D}} $  a certain
geometric-categorical object
which we call the global Fukaya category. More
specifically this
will have the structure of a categorical fibration
over $X _{\bullet }$, which is our name for a
categorical fibration over a Kan complex. 
So one necessary ingredient for this story will
be the notion of an $\infty$-category, or a
quasi-category in the specific model here. As this
model is fixed in the paper we will no longer
mention this. An $\infty$-category is a simplicial set with an additional property, relaxing the notion of Kan complex.  

Whereas Kan complexes are fibrant objects in the Quillen model structure on the category
$sSet$ of simplicial sets,  
$\infty-categories$ are in turn the fibrant objects for a different non Quillen
equivalent model structure on $sSet$ called the Joyal model structure. 
For the reader's convenience we will review some of
this theory of simplicial sets in the Appendix \ref{appendix.quasi}.

We will see in Section \ref{section:extension} how
to enrich our construction so that our geometric
functors $F _{P, \mathcal{D} }$ extend to functors 
$$F _{P, \mathcal{D}}: \Delta (X) 
\to A_{\infty}-Cat ^{unit}, $$ in other words so
that degeneracies are included. This is purely
algebraic and we assume this for now. 
\begin{remark}
   \label{remark:NaiveColimit} A naive idea for an
   invariant of the Hamiltonian fibration $M
   \hookrightarrow P \to X$ is to try to form the
   colimit  directly:
$$Fuk (P, \mathcal{D}) = colim _{\Delta (X) } F _{P,
\mathcal{D} },$$ which one may hope is an $A
   _{\infty}$-category. However, this has great
technical difficulties. The colimit may not even
exist, as in general colimits of diagrams of $A
_{\infty}$ categories may not exist.  Our category
$A _{\infty}-Cat ^{unit}$ is a very special
sub-category of all $A _{\infty}$ categories, so
that such co-limits may exist (this is perhaps open). But this is not good
enough, as we need suitable invariance of $Fuk (P,
\mathcal{D} ) $, say up to quasi-isomorphism,    under change of
$\mathcal{D} $, which means that in our case we
need some kind of homotopy colimit, which means
that our $A _{\infty}-Cat ^{unit}$ needs to be some kind
of model category. This is again a technical
challenge particularly because $A _{\infty}-Cat
   ^{unit}$
is so special.  See however ~\cite{citeLefevre-HasegawaSurlesAinftycategories} where a
kind of model structure is constructed on a more
general  category of $A _{\infty}$  categories,
(but with no co-limits!). 
\end{remark}
We are going to compose $F$ with  the nerve
functor to land in the much more robust category of
$\infty$-categories, and then take the colimit. The use of the nerve functor has some perhaps unexpected benefits. We get a certain rich additional structure for our invariant
object, closely tied the geometry, (a categorical
fibration structure). This will be crucial for computations in Part II.   
\subsection{The $A _{\infty} $-nerve} \label{sec:OutlineAinftyNerve}
We have already briefly discussed the $A _{\infty}
$-nerve in the Introduction, and from now on it
will just be called the nerve $N$.  

 We want a certain natural functor $$N: A _{\infty}-Cat ^{unit} \to
\infty-\mathcal{C}at.$$ A full construction is in
Appendix \ref{appendix:nerve}, but here is an
outline.
Let ${C}$ be a strictly unital $A
_{\infty}$ category. The 2-skeleton of the nerve $N ({C})$, has objects of
${C}$ as 0-simplices, morphisms of ${C}$ as $1$-simplices and the 2-simplices consist of a triple of objects $X, Y, Z$, a
triple of morphisms $$f \in hom  _{  {C}} (X,Y), g \in hom _{ 
{C}} (Y, Z), h \in hom _{  {C}} (X, Z),$$
a morphism $e \in \hom  _{{C}} (X, Z) _{1}$,
(subscript $1$ corresponds to the degree) with $de= h - f \circ g$.  

\subsection {Definition of the global Fukaya
category}
\label{sec:GlobalFukaya}
\begin{definition}  We define:
\begin{equation*} Fuk _{\infty} (P, \mathcal {D})
   :=    colim _{\Delta (X)
   } N \circ F ^{unit} _{P, \mathcal{D} }  \in sSet.
\end{equation*}  
\end{definition} 
An explicit construction of the colimit is given in Lemma \ref{lemma.colimit}.  
In principle the above definition could be very impractical since general
objects in $sSet$ are difficult to deal with, while taking fibrant replacements
for the Joyal model category structure
could obfuscate all the original geometry contained in the Fukaya category.
Thankfully none of this is necessary as we have a couple of miracles coming from the underlying 
geometry to save us.  The upshot of these miracles is the
following theorem, to be proved in Section \ref{section.algebraic}.

\begin{theorem} \label{prop:quasicatfibration} As defined $Fuk _{\infty} (P,
\mathcal {D}) \in \infty-\mathcal {C}at$, i.e. is
   a $\infty$-category moreover
there is a natural categorical fibration $$N
   (Fuk (M, \omega )) \hookrightarrow Fuk
_{\infty} (P, \mathcal {D}) \to X _{\bullet},$$
   whose  concordance equivalence (Definition
   \ref{def:concordanceCartesian})  class
   is independent of the choice of $
\mathcal {D} $.
\end{theorem}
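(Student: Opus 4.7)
The strategy is threefold, matching the three claims: first, identify the colimit explicitly so that it comes equipped with a canonical projection to $X_\bullet$; second, verify the lifting properties showing this projection is a coCartesian fibration with fiber $N(Fuk(M))$; third, deduce both the quasi-category assertion and independence from $\mathcal{D}$ from these.

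First I would unwind the colimit. Since colimits in $sSet$ are computed levelwise, a $k$-simplex of $Fuk_\infty(P,\mathcal{D})$ can be represented as a pair $(\Sigma, \tau)$ with $\Sigma \in \Delta/X_\bullet$ and $\tau$ a $k$-simplex of $N(F(\Sigma))$, modulo the equivalence generated by morphisms in $\Delta/X_\bullet$ (this is the content of the explicit Lemma \ref{lemma.colimit} referenced later). The key structural observation is that each $N(F(\Sigma))$ carries a canonical map to $\Delta^n$: by construction, every object of $F(\Sigma)$ lives over a vertex $x_i$ of $\Delta^n$, every hom-generator over the edge joining a pair of vertices, and the $A_\infty$ compositions over the higher faces via the maps $u(m_1,\ldots,m_d,n)$. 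Hence, composing with $\Sigma : \Delta^n \to X_\bullet$ gives a map $N(F(\Sigma)) \to X_\bullet$ which is easily checked to be compatible with all morphisms of $\Delta/X_\bullet$, producing a well-defined projection $p: Fuk_\infty(P,\mathcal{D}) \to X_\bullet$.

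Next I would show $p$ is coCartesian. The essential input is Lemma \ref{lemma:functorialF}: each morphism in $Simp(X)$ is sent by $F$ to a fully-faithful quasi-equivalence, and therefore its nerve is an inclusion of quasi-categories that is a categorical equivalence. Given a $1$-simplex $e: \Delta^1 \to X$ and an object $L \in F(e(0))$, the image of $L$ under the fully-faithful embedding $F(e(0)) \hookrightarrow F(e)$ provides a lift, and the identity morphism on $L$ inside $N(F(e))$ furnishes a canonical edge lifting $e$; the coCartesian property of this edge reduces to essential surjectivity of all subsequent transition functors, which holds by Lemma \ref{lemma:functorialF}. For the general inner fibration property, inner horn lifts $\Lambda^k_i \to Fuk_\infty(P,\mathcal{D})$ over a prescribed $\Delta^k \to X_\bullet$ reduce, via the explicit form of the colimit and the fact that faces stay inside the fully-faithful subcategory, to inner horn fillings in the single quasi-category $N(F(\Sigma))$ for $\Sigma$ the image simplex. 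The cleanest way to package all of this is to identify $Fuk_\infty(P,\mathcal{D})$ with the relative/unstraightened nerve of the composite $N \circ F: \Delta/X_\bullet \to \mathcal{C}at_\infty$ in the sense of Lurie, and then invoke the general theorem (see Lurie \emph{Higher Topos Theory}, Ch.~3) that the relative nerve of a diagram valued in $\mathcal{C}at_\infty$ is a coCartesian fibration.

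The quasi-category property of $Fuk_\infty(P,\mathcal{D})$ is then automatic, since $X_\bullet$ is a Kan complex (hence a quasi-category) and any coCartesian fibration over a quasi-category is itself a quasi-category. The fiber over a $0$-simplex $x$ is $N(F(x)) = N(Fuk(P_x)) \simeq N(Fuk(M))$, which identifies the inclusion $N(Fuk(M)) \hookrightarrow Fuk_\infty(P,\mathcal{D})$; since the transition functors are quasi-equivalences, all fibers are equivalent. Finally, independence of $\mathcal{D}$ follows directly from Theorem \ref{thm.concordance}: given two choices $\mathcal{D}_0, \mathcal{D}_1$, the concordance produces a pre-$\infty$-functor $T: Simp(X \times I) \to A_\infty\text{-}Cat^{unit}$, and the same colimit construction applied to $T$ yields a coCartesian fibration over $X_\bullet \times \Delta^1$ whose restrictions to $X_\bullet \times \{i\}$ are $Fuk_\infty(P, \mathcal{D}_i)$, giving the equivalence in $sSet/X_\bullet$.

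The main obstacle I expect is the middle paragraph — producing coCartesian lifts and verifying the inner fibration property directly from the combinatorial description of the colimit. The practical route is to avoid a hand check by identifying the colimit with Lurie's relative nerve of a diagram of $\infty$-categories, so that the coCartesian fibration property is handed to us by the general machinery; the only ``geometric'' content one must then verify is that the transition functors are categorical equivalences, which is exactly the output of Lemma \ref{lemma:functorialF}.
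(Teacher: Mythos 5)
Your overall skeleton coincides with the paper's: an explicit levelwise model of the colimit carrying a projection to $X_\bullet$ (the paper's Lemma \ref{lemma.colimit}), the inner-fibration property because the preimage of a simplex $\xi$ is exactly the quasi-category $N(F(\Sigma_\xi))$, the total space being a quasi-category because the base is, and independence of $\mathcal{D}$ via the concordance Theorem \ref{thm.concordance}. The genuine gap is in your production of coCartesian lifts. In the colimit, a $1$-simplex lying over a nondegenerate edge $e$ of $X_\bullet$ must be a morphism from an object over $e(0)$ to an object over $e(1)$ (its projection must be $e$ itself), whereas the identity morphism on the image of $L$ under the embedding $F(e(0))\hookrightarrow F(e)$ lies over the \emph{degenerate} edge at $e(0)$ --- indeed in the colimit it is identified with an edge of $N(F(e(0)))$ --- so it is not a lift of $e$ at all. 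The paper's route is different at exactly this point: since $X_\bullet$ is a Kan complex, by [HTT, Proposition 2.4.1.5] it suffices to exhibit, for each edge $m$ and each object $a$ in a fiber, an \emph{equivalence} in the restriction of the total space over $m$ with prescribed (co)domain; such an equivalence exists because $N$ sends quasi-equivalences to categorical equivalences (Lemma \ref{lemma.weakequiv}), so the inclusion of a fiber into the restriction over $m$ is essentially surjective, producing a genuine nonidentity (Floer-continuation type) edge joining objects over the two endpoints. Your appeal to essential surjectivity of the transition functors is the correct input, but it must be used to manufacture this edge over $e$, not an identity edge over a vertex.

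Your fallback --- identify the colimit with Lurie's relative (unstraightened) nerve of $N\circ F$ and quote the general coCartesian-fibration theorem --- does not repair this, because the relative nerve of a diagram indexed by $\Delta/X_\bullet$ is a coCartesian fibration over $N(\Delta/X_\bullet)$, not over $X_\bullet$; these are only weakly equivalent (via the last-vertex map), pushing forward along that map does not preserve coCartesian fibrations, and the asserted identification of the strict colimit with a relative nerve is itself unjustified. The paper avoids all of this by verifying the lifting properties directly on the explicit model $L$. A smaller point: in the last step, passing from the concordance fibration over $(X\times I)_\bullet$ to an equivalence in $sSet/X_\bullet$ is not immediate either; the paper has to invoke (the proof of) the straightening theorem \ref{thm.straightening} to conclude.
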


\subsection {Universal construction via diffeological
spaces} \label{sectionFukayaHochschild}
Let $E _{M} \to BHam (M, \omega) $ be the associated Hamiltonian
$M$-bundle $$E _{M} = E \times _{Ham (M,  \omega)
} M,$$ for $E$   
the universal principal $Ham (M,
\omega)$-bundle $E \to BHam (M,  \omega) $.
$B Ham (M,
\omega)$ or the classifying space of any smooth Lie group, based on Milnor's
construction \cite{citeMilnoruniversalbundles}, 
admits a well-defined notion of smooth maps into it from smooth manifolds. To be precise it has a natural diffeology, see Magnot-Watts~\cite{citeMagnoWattsDiffeology} and likewise the universal $M$-bundle $$E _{M}  \to BHam (M,
\omega)$$ has a natural diffeology, so that for a diffeological map $$f: B \to BHam (M,
\omega)$$ the pull-back bundle $f ^{*} E _{M}  $ is naturally diffeological. If
$B $ is in addition a smooth dimension $k$ manifold then $f ^{*} E _{M}  $ is a diffeological
space locally (diffeologically) diffeomorphic to
$U \times M$,  for $U \subset \mathbb{R} ^{k} $. 
But the
latter is clearly locally diffeomorphic to $\mathbb{R} ^{k+2n} $, for $2n$ the dimension
of $M$. Thus $f ^{*} E _{M}  $ is a diffeological space locally diffeomorphic to
$\mathbb{R} ^{k+2n} $ and hence is a smooth manifold. More formally, $f ^{*} E _{M}  $ is contained in the
full subcategory of the category of diffeological spaces corresponding to smooth
manifolds.

In the case $B = \Delta ^{k}
$  is the $k$-simplex, and given an open $U$ with $\Delta ^{k} \subset U \subset \mathbb{R} ^{k}
$, by a smooth map $\Sigma: \Delta ^{k}  \to BHam
(M, \omega)$ we mean a map with a diffeological extension
$\widetilde{\Sigma }: U  \to BHam (M, \omega)$, with $U$ given the 
diffeology induced from $\mathbb{R} ^{k} $ .  
Then we may conclude as above that $\widetilde{\Sigma }  ^{*} E _{M}  $ is naturally a smooth bundle.  

So to each smooth, in the sense
above, map $\Sigma:
\Delta^{k} \to BHam (M,  \omega)  $  we have a
naturally corresponding smooth bundle $\Sigma
^{*}  E _{M}$ over $\Delta^{k} $. The construction of the
previous section then works as before, associating
to $\Sigma $ an $A _{\infty}$ category $F _{E
_{M}, \mathcal{D}} (\Sigma
) $.   We may then define $B Ham (M,  \omega)
_{\bullet } $ as the simplicial set, with $B Ham (M,  \omega)
_{\bullet } (k) $   the set of
diffeological, collared maps $\Sigma: \Delta ^{k}
\to BHam (M, \omega)$. $B Ham (M,  \omega)
_{\bullet } $ is readily seen to be a Kan complex.

\begin{proposition}
   \label{prop:functorSimpBham} There is a natural 
functor 
   $$F _{E
_{M},\mathcal{D} }:
Simp(BHam (M,  \omega)) \to A _{\infty} -Cat
   , $$
and so an induced functor
   $$F ^{unit} _{E
_{M},\mathcal{D} }:
\Delta(BHam (M,  \omega)) \to A _{\infty} -Cat
   ^{unit}, $$
for $Simp(BHam (M,  \omega))$ the category of 
smooth (diffeological simplices) as above.
\end{proposition}
The proof is omitted since this is just a
summary of what we have already discussed. 
\subsection {Universal construction via smooth
simplicial sets}
\label{section:SmoothSimplicialSet}
A more abstract but technically more elementary
approach to the universal construction can
   be extracted from the author's
~\cite{citeSavelyevSmoothSimplicial}. There, an
abstract Kan complex 
$BG ^{\mathcal{U} } _{\bullet} $ \footnote {The
notation in ~\cite{citeSavelyevSmoothSimplicial}
omits the subscript: $\bullet$.}  is
constructed for any Frechet Lie group and for each choice of a particular
Grothendieck universe $\mathcal{U} $.  The  
Kan complex $BG ^{\mathcal{U} } _{\bullet} $ 
has a certain additional structure called a smooth
structure. Concretely, this smooth structrure implies that for every
$k$-simplex $\Sigma \in BG
^{\mathcal{U} } _{\bullet } (k) $, there is a
canonically associated smooth $G$-fibration $P
_{\Sigma } \to \Delta^{k} $. Using this, we
immediately 
obtain
a functor using our construction:
\begin{equation}
   \label{eq:Fabastract}
   F _{E _{M}, \mathcal{D} }:
Simp(BHam (M,  \omega) ^{\mathcal{U} } _{\bullet
   }) \to A _{\infty} -Cat,
\end{equation}
where  $Simp(BHam (M,  \omega) ^{\mathcal{U} } _{\bullet
   })$ denotes the simplex category of the
simplicial set $BHam (M,  \omega) ^{\mathcal{U} } _{\bullet
   }$, cf. Section
   \ref{section:simplexCategory}.    
   Now, as a particular case of 
   ~\cite[Theorem
   7.5]{citeSavelyevSmoothSimplicial},  $$|BHam (M,
\omega) ^{\mathcal{U} } _{\bullet }| \simeq BHam
(M,  \omega)  $$  for $|\cdot|$  the geometric
realization, and $\simeq$ homotopy equivalence.  
In particular, to prove  Theorems
   \ref{thmInfinityUniversal},
   \ref{thmGroupHomoIntro} we may also start with
   \eqref{eq:Fabastract}. See the proof below. One
   advantage of this simplicial approach is that
   the connection with homotopy groups becomes
elementary.

\subsubsection* {Proof of Theorems
\ref{thmInfinityUniversal},
\ref{thmGroupHomoIntro}}
\label{sec:proofTheoremsMain} 
Given a smooth Hamiltonian fibration $M \hookrightarrow P
\to X$, by Theorem \ref{prop:quasicatfibration}
we obtain a well-defined concordance class of
a categorical fibration $Fuk _{\infty} (P) \to X _{\bullet}$. By Theorem \ref{corollaryStraightening} this is
classified by a homotopy class of a map:
\begin{equation*}
	cl _{P}: X _{\bullet} \to (\mathbb{S}, N
	Fuk (M,  \omega)).   
\end{equation*}

Likewise, given the functor  $$F ^{unit} _{E
_{M}, \mathcal{D}}: \Delta (BHam (M,  \omega)) \to A _{\infty} -Cat ^{unit}, $$ by Theorem \ref{prop:quasicatfibration} we
obtain a  categorical fibration 
\begin{equation} \label{eq:universalCocartesian}
	N (Fuk (M, \omega )) \hookrightarrow Fuk
_{\infty} (E _{M}, \mathcal {D}) \to BHam (M,  \omega)  _{\bullet}.
\end{equation}
By Theorem \ref{corollaryStraightening},
there is then a uniquely determined (simplicial) homotopy class of the ``classifying'' simplicial map 
\begin{equation*} 
cl = cl (Fuk _{\infty} (E _{M})  ): B Ham (M,
	\omega) _{\bullet}    \to (\mathbb{S}, NFuk (M)),
\end{equation*}
of the categorical fibration \eqref{eq:universalCocartesian}.  Then we obtain a group homomorphism  of simplicial
homotopy groups $$cl _{*}: \pi _{i} (B Ham (M,
\omega) _{\bullet}, x _{0})   \to \pi
_{i} (\mathbb{S}, NFuk (M)).
$$
If we knew that $B Ham (M,  \omega) _{\bullet}$  is weakly equivalent to the usual continuous
singular set of $B Ham (M,  \omega) $,  then we
would obtain a group homomorphism
$$cl _{*}: \pi _{i} (B Ham (M,
\omega), x _{0})   \to \pi
_{i} (|\mathbb{S}|, NFuk (M)).
$$ This is probably true, but I don't know if a
ready reference exists.
Alternatively, we can use the map $$cl: B Ham (M,
\omega) ^{\mathcal{U}} _{\bullet}   \to
(\mathbb{S}, NFuk (M)),
$$
induced by \eqref{eq:Fabastract}.  Since $|BHam (M,
\omega) ^{\mathcal{U} } _{\bullet }| \simeq BHam
(M,  \omega)  $ we immediately  obtain the
homomorphism
$cl _{*}: \pi _{i} (B Ham (M,
   \omega), x _{0})   \to \pi
   _{i} (|\mathbb{S}|, NFuk (M)).
   $ 
And this fully proves Theorem \ref{thmGroupHomoIntro}.

Now if $M \hookrightarrow P \to X$  is a smooth Hamiltonian fibration then $P \simeq f _{P} ^{*}E _{M}$ for some
diffeological smooth map $f _{P}: X \to B Ham
(M,  \omega) $.   Then by Theorem \ref{thmNaturality} 
$$Fuk _{\infty} (P) = f _{P, \bullet} ^{*} Fuk _{\infty} (E
_{M}), $$  with $f _{P, \bullet }: X _{\bullet } \to B Ham
(M,  \omega) _{\bullet }$ denoting the induced
simplicial map.  In particular, $Fuk _{\infty} (P) $
is classified as a categorical fibration by the map
$cl \circ f _{P}$.    And so by Theorem
\ref{corollaryStraightening}  $cl _{P} \simeq
cl \circ f _{P}$. And so we have proved Theorem
\ref{thmInfinityUniversal}.  (We could also
have proceeded via smooth simplicial sets for this
part.) 
\qed
\begin{remark} \label{remark:unitalreplacement}
In the construction of $Fuk _{\infty} (P)$ we had to take a
unital replacement for the functor $F: \Delta/X _{\bullet}  \to A _{\infty}-Cat $. One may worry then that this algebraic step will obfuscate the ``geometry'' of simplices of $Fuk _{\infty} (P) $. This is not really the case. First the $A _{\infty} $ nerve $NC$ of a non-unital $A _{\infty} $ category $C$ still
exists as a semi-simplicial set,  that is as 
a co-functor $\Delta ^{inj} \to Set$, with $\Delta ^{inj} $ the subcategory of
$\Delta$ consisting of injective morphisms. For a unital replacement equivalence $C 
\to C ^{unit}  $ of $C$, constructed as in Section \ref{sectionReplacement},
we then have an induced morphism of semi-simplicial sets
$NC \to NC ^{unit}$, which by construction induces a bijection $NC ([n]) \to NC
^{unit} ([n]) $, for each $[n]$. So we may think without loss of geometric information, of simplices of $NC
^{unit} $ in terms of simplices of $NC$. (The former just
have an extra formal algebraic structure.)
\end{remark}
\section{Extending $F$ to degeneracies} \label{section:extension}
We have to construct our perturbation data $\mathcal{D}$ for all simplexes in such a way that there is a natural functor:
\begin{equation} \label{eq:extendedF}
   F: \Delta(X) \to A _{\infty}-Cat ^{unit},
\end{equation}
extending the geometric functor
$$F _{\mathcal{D}}: Simp(X) \to A _{\infty}-Cat ^{unit},
$$ for this data $\mathcal{D}$
as previously constructed. This perturbation data will be referred to as \emph{extended perturbation data}.

Suppose that we are given a commutative diagram: 
\begin{equation*}
\begin {tikzcd}
\Delta ^{0}  \ar [r, "j+1"]  \ar [dr, "
   x _{j}"]   & \Delta ^{n+1}
   \ar [d, "\widetilde{\Sigma} "]   \ar [r, "pr"]  & \Delta ^{n} \ar [ld, "\Sigma"] \\
                                   & X, 
\end {tikzcd}
\end{equation*}
where 
$$pr: \Delta ^{n+1} \to \Delta ^{n}, \quad j \in [n] $$
is induced by the unique surjection $[n+1] \to
[n]$ taking $j$ and $j+1$  to $j$.
Here $x _{j}  = \Sigma \circ j $, for 
${j} $ also denoting the  
map $pt \to \Delta ^{n} $   whose image is the
vertex $j$.   In particular, we have a
morphism $pr: \widetilde{\Sigma } \to \Sigma  $
in $\Delta (X) $.

Let the system of natural maps $\mathcal{U} (X) $
be fixed throughout in what follows.  And $\mathcal{D} = (\mathcal{U} (X),
\mathcal{F}) $. Let $\mathcal{F} _{\Sigma} $
correspond to $\Sigma$.  We first show how to construct certain induced perturbation
data $\mathcal{F} _{\widetilde{\Sigma}} = pr ^{*}
\mathcal{F} _{\Sigma} $. And using this we
construct an $A _{\infty} $ category $F
(\widetilde{\Sigma} ) $ as previously.  

We may as before define $$\obj F 
(\widetilde{\Sigma} ) = \bigcup _{0 \leq i \leq
n+1} \obj F (x _{i}), \quad x _{i} =
\widetilde{\Sigma} \circ i,$$ $i: pt \to
\Delta^{n+1} $ the inclusion map of $i$'th vertex. And in this case $pr$ clearly induces a map of sets of objects $$pr _{*}: \obj F  (\widetilde{\Sigma} ) \to \obj F (\Sigma).
$$ 
We need to specify our system $\mathcal{F}
_{\widetilde{\Sigma } }$  of connections and
almost complex structures corresponding to
$\widetilde{\Sigma}$. We say what to do with
connections, the case of almost complex structures
is analogous. Given vertices $i,j$  of $\Delta
^{n+1} $ and objects $L \in {F}  (x
_{i}), L' \in {F} (x _{j}  )$,  we set $$\mathcal{A} (L, L') =  \mathcal{A} (pr _{*} L, pr _{*} L'),
$$ where the latter connection is determined by $\mathcal{F} _{\Sigma}  $, and where the equality is with respect to the natural identification 
$$(\Sigma \circ pr \circ m
_{i,j}) ^{*}  P =  (\widetilde{\Sigma} \circ m
_{i,j}) ^{*} P.
$$

Likewise, given objects $L _{0}, \ldots, L _{s} \in F (\widetilde{\Sigma} ) $  we set 
$$\mathcal{F}   (L_0, \ldots , L_s, \widetilde{\Sigma} ,r  ) = \mathcal{F}   (pr_*L_0,
\ldots, pr_*L_s, \Sigma,r  ).$$ Here the equality
is again with respect to the natural
identification of the corresponding bundles,
(based on the Axiom \ref{axiom:naturalityX2}  of $\mathcal{U} (X) $).

All together this determines the (partial) data
$\widetilde{D} _{\Sigma} = (\mathcal{U} (X),
\mathcal{F} _{\widetilde{\Sigma } } )  $.

Using this $\mathcal{D} _{\widetilde{\Sigma} } $ we then define an 
 $A _{\infty} $ category denoted by $ F (\widetilde{\Sigma} ) $ as previously.
By construction, the natural map on objects:
 \begin{equation*} 
 pr _{*}:  \obj F   (\widetilde{\Sigma} ) \to \obj F (\Sigma)
\end{equation*}
extends to a strict $A _{\infty} $ functor 
$$F (pr): F (\widetilde{\Sigma}) \to F (\Sigma) $$ satisfying:
\begin{equation*}
F (pr) \circ F (\sigma) = id,
\end{equation*} where $\sigma: \Sigma \to
\widetilde{\Sigma}$ is induced by the map  $d
^{j+1}: [n] \to [n+1]   $, which is the unique
injection in $\Delta$  whose image misses the
vertex $j+1$. We are going to call the above the
\textbf{\emph{extension construction for
$\widetilde{\Sigma}, \mathcal{D}  _{\Sigma} $}}
with the corresponding extended data denoted by $\mathcal{D} 
_{\widetilde{\Sigma } }$, called \emph{extended
perturbation data for $\widetilde{\Sigma} $ }. 

We are now going to proceed by induction. Let $S
(N) $ be the statement: there exists an extended
perturbation data $\mathcal{D}$ for all simplices
up to degree $N$, so that we have an extension
of $F| _{Simp ^{N} (X) } $ to a functor $$F ^{N}:
\Delta ^{N}(X)  \to A _{\infty}-Cat ^{unit}, $$
where $\Delta^{N} (X)  $ and $Simp ^{N} (X)$ are
the subcategories of simplices of degree at most
$N$. The statement $S (0) $  is trivial since $Simp
^{0} (X)  = \Delta ^{N} (X) $, so that there is
nothing to prove.  As usual for us, we also denote
by $S (N) $  the corresponding partial
perturbation data.

We prove $S (N) \implies S (N+1) $, and 
moreover $S (N+1) $    can be assumed to extend $S
(N) $. Let ${\Sigma}'  $ be a general $(N+1)$-simplex
of $X _{\bullet} $. If $\Sigma ' $ is degenerate,
so that $\Sigma' = \Sigma \circ pr $ for some
degeneracy morphism $pr$, for some $\Sigma \in
Simp ^{N} (X) $, then define the data $\mathcal{D}
_{\Sigma'} $ and $F ^{N+1} (\Sigma') $ as in the
extension construction above for
$(\widetilde{\Sigma } =  \Sigma'), \mathcal{D} _{\Sigma} $.
Otherwise, if $\Sigma$ is a non-degenerate $(N+1)$-simplex then its faces are $N$-simplices for which we already have perturbation data $\mathcal{D}$,
which is then extended arbitrarily to perturbation
data $\mathcal{D} _{\Sigma} $ for $\Sigma$.
The extension is obtained as in the proof of
Lemma \ref{lemma:naturalF}. Using this data define
$F ^{N+1} (\Sigma) $ as previously. 

We have thus completed the induction step.   By recursion, we may then define a sequence of
systems $\{S _{N}\} _{N \geq 0}$,  so that $S (N+1)  $
extends $S (N) $, for each $N$. 
We then
define that total extended data as $\mathcal{D} = \bigcup_{N}
S (N)  $. And so  we obtain our  extension
$$F:
\Delta (X)  \to A _{\infty}-Cat ^{unit}, $$ by the
previous construction, using the extended data
$\mathcal{D} $. 

\section {Algebraic-topological considerations} \label{section.algebraic}
In this section by equivalence of
$\infty$-categories we always mean categorical equivalence. This and other
categorical preliminaries needed for this section are discussed in the Appendix
A.  
We will prove here Theorem
\ref{prop:quasicatfibration}. 

\subsection {Colimit of $F$} 
\begin{definition}\label{def:geometricFunctor}
  A functor $F: \Delta(X) \to A _{\infty}-Cat
   ^{unit}$ which is induced by a geometric
   functor $F _{\mathcal{D}}: Simp
   (X)  \to A _{\infty}-Cat ^{unit}$  as in Section
   \ref{section:extension} will be called
   \textbf{\emph{geometric}}. 
\end{definition}
\begin{remark}
   \label{remark:} It would be more ideal to extract
   suitable minimal algebraic axioms for our ``geometric functors''. However, this may take us too far afield.  
\end{remark}

Given a geometric functor $
F: \Delta(X) \to A _{\infty}-Cat ^{unit}
$, let 
\begin{equation}
   \label{eq:FukF}
   Fuk _{\infty} (F)  := colim
_{\Delta (X)} 
   N{F}. 
\end{equation}
The category of simplicial sets is well known to
be (co)-complete
so that the limit certainly exists as a simplicial
set. However, we shall show, in the following
proposition, that this limit has
additional structure of a categorical fibration. 
\begin{remark} \label{remark:}
It is possible that the proposition below can be
obtained as a consequence of more general
principles, using general theory of colimits of
$\infty$-categories.  However, I suspect that for a general
functor of the form $G: \Delta (X) \to
\infty-\mathcal{C}at $, we must first take a fibrant
replacement of the functor $G$ (for the Reedy-Joyal
model structure), if we want a similar
structure on $colim _{\Delta (X)  } G$.
(I am not however sure that this alone is sufficient.)  
\end{remark}
\begin{proposition} \label{propostion.simplicialmap}  There is a natural projection of simplicial sets 
$$p: Fuk _{\infty} (F)   \to X _{\bullet}  ,$$ and this
is a categorical fibration.
\end{proposition}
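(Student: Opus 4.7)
The projection $p$ is built from the universal property of the colimit. For each object $\sigma: \Delta^n \to X_\bullet$ of $\Delta/X_\bullet$, the construction of Section \ref{section:construction} naturally equips $F_P(\sigma)$ with a canonical $A_\infty$ functor $\pi_\sigma: F_P(\sigma) \to [n]$, where $[n]$ is the poset category viewed as an $A_\infty$ category: every object lies over a specific vertex of $\Delta^n$ and every generator of $\hom_{F_P(\sigma)}(L_i, L_j)$ is attached to a specific edge, by the very way the morphism sets were defined. Taking $A_\infty$ nerves gives $NF_P(\sigma) \to \Delta^n_\bullet$, and composing with $\sigma: \Delta^n_\bullet \to X_\bullet$ produces a cone in $sSet/X_\bullet$ compatible with all morphisms of $\Delta/X_\bullet$; this descends to the required simplicial map $p: \mathcal{P}_P \to X_\bullet$.

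To verify the (co)Cartesian fibration property, I would first identify the fibers. By the explicit colimit description of Lemma \ref{lemma.colimit}, together with the fact that $F_P$ sends a totally degenerate $n$-simplex at $x$ to a trivial ``thickening'' of $F_P(x)$, one obtains $p^{-1}(x) \cong NF_P(x) = N\,Fuk(P_x)$ for every vertex $x$. The inner-fibration condition then reduces, over any non-degenerate simplex $\sigma \in X_\bullet$, to the inner-horn filling property for $NF_P(\sigma)$, which is a quasi-category by the Lurie--Tanaka theorem on $A_\infty$-nerves of c-unital $A_\infty$-categories.

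The main obstacle is producing the (co)Cartesian edges. Given $e: x \to y$ in $X_\bullet$ arising from $m: \Delta^1 \to X$ and a target object $L_y \in F_P(y)$, one invokes Lemma \ref{lemma:functorialF}: the canonical embeddings $F_P(x), F_P(y) \hookrightarrow F_P(m)$ are fully-faithful quasi-equivalences. Essential surjectivity on cohomology of $F_P(x) \hookrightarrow F_P(m)$ produces some $L_x \in F_P(x)$ and a cocycle $\alpha \in \hom_{F_P(m)}(L_x, L_y)$ whose cohomology class is an isomorphism; $\alpha$ determines a $1$-simplex of $NF_P(m)$ and hence a $1$-simplex $\tilde e \subset \mathcal{P}_P$ lifting $e$. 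Via the Appendix \ref{appendix.quasi} description of mapping spaces in an $A_\infty$-nerve, the Cartesian property of $\tilde e$ reduces to the claim that composition with $\alpha$ induces quasi-isomorphisms of hom-complexes; this is automatic from full-faithfulness of the three embeddings just mentioned. CoCartesian edges are constructed symmetrically, starting instead from a source object $L_x \in F_P(x)$. Finally, the $\mathcal{D}$-independence of the equivalence class of $p$ in $sSet/X_\bullet$ is inherited from Theorem \ref{thm.concordance} applied to $P \times I \to X \times I$, since the resulting concordance of pre-$\infty$-functors induces an equivalence of the corresponding fibrations.
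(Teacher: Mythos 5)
Your overall strategy is the same as the paper's: the explicit colimit description (Lemma \ref{lemma.colimit}), the observation that the preimage of a simplex $\xi$ is $NF_P(\Sigma_\xi)$ and hence a quasi-category (so $p$ is an inner fibration), the production of lifts over an edge $m$ from the fact that $F_P(x)\hookrightarrow F_P(m)$ is a fully faithful quasi-equivalence (Lemma \ref{lemma:functorialF}, pushed through $N$ via Lemma \ref{lemma.weakequiv}), and $\mathcal{D}$-independence from Theorem \ref{thm.concordance}. However, your construction of $p$ itself contains a genuine error: there is no $A_\infty$ functor $\pi_\sigma\colon F_P(\sigma)\to[n]$ to the poset $[n]$ as you describe. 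The hom spaces of $F_P(\Sigma)$ are defined for \emph{every} ordered pair of objects lying over distinct vertices $x_i\neq x_j$, in both directions, so composable chains have vertex sequences that need not be monotone and cannot be recorded by a map to the poset (equivalently, the nerve does not map to $\Delta^n_\bullet$ in the way you claim). Moreover, the map on hom complexes sending each Floer generator to the generator of the linearized poset hom is not in general a chain map (there is no parity constraint on the number of terms of $\mu^1$ over $\mathbb{F}_2$), and the $A_\infty$-nerve of the linearized poset is strictly larger than $\Delta^n_\bullet$ anyway (already a $2$-simplex carries the extra datum $e$ of Definition \ref{def.nerve}). The paper avoids all of this by defining $p_\Sigma$ purely combinatorially: a $k$-simplex of $NF_P(\Sigma)$ determines a composable chain, hence a sequence of $k+1$ vertices of $\Delta^n$, and it is sent to the unique linear vertex-preserving map $\Delta^k\to\Delta^n$ with those vertices, i.e.\ into the contractible Kan subcomplex $\hat{\Delta^n}_\bullet$ of the singular set of $\Delta^n$ (not into $\Delta^n_\bullet$), then pushed forward by $\Sigma$. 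With that corrected target your cone/colimit argument for the existence of $p$ goes through.

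On the (co)Cartesian property, your lift $\tilde e$ coming from a cocycle $\alpha$ whose cohomology class is an isomorphism is the right object, but your reduction of Cartesian-ness to ``composition with $\alpha$ induces quasi-isomorphisms of hom complexes'' is not the full criterion: for an inner fibration one must check that the relevant square of mapping spaces, including those of the base $X_\bullet$, is a homotopy pullback, so as stated this step is incomplete. The paper's shortcut is to note that since $\alpha$ is an equivalence, it is automatically a (co)Cartesian edge for the inner fibration $p$ (Lurie's criterion, Proposition 2.4.1.5 of Higher Topos Theory, is exactly what is invoked), so it suffices to produce equivalence lifts with prescribed endpoint, which is what the quasi-equivalence $\mathcal{P}_1\hookrightarrow\mathcal{P}_m$ gives; you should either invoke this or carry out the mapping-space computation in full. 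Your treatment of degenerate simplices and of $\mathcal{D}$-independence matches the paper.
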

\begin{proof} 
Let us first give a more easily
conceptualized presentation of the colimit $Fuk _{\infty}
(F)$.  We should say that we are just
simplifying the standard, ``level wise'' 
construction, of colimits  of simplicial
sets, in our specific context, so that the
structure of a categorical fibration becomes
apparent.

Define a partial order $<$ on the set of pairs $(f, \Sigma)$,  $f \in NF(\Sigma) (k) $ a $k$-simplex, $k \geq 0$,  $ \Sigma
\in \Delta(X) $ as follows.  $$(f, \Sigma) < (f', \Sigma')$$ if
there is a morphism $$\sigma: \Sigma \to \Sigma'$$
in $\Delta  (X)$ induced by injective $[n] \to [m]$ with
$n \leq m$, i.e.
a \emph{face morphism}, s.t. $$NF (\sigma) (f) = f'.
$$ Clearly for every $(f, \Sigma)$ there is a unique least pair $$(f _{min}, \Sigma _{\min}) < (f, \Sigma).
$$  Note that if
$f$ is a $k$-simplex then $\Sigma _{min}$ is
not necessarily a $k$-simplex. However,  once we
impose the following equivalence relation, we
get something similar, see Lemma \ref{lemma:canonicalrespresentative} below.

  Let $\widetilde{C}$ be the set of minimal pairs. Define an equivalence relation
on $\widetilde{C} $ first by defining
$$(f, \Sigma) \sim (f', \Sigma')$$ if there exists a degeneracy morphism $d: \Sigma \to
\Sigma'$ induced by $[m] \to [n]$ with $m>n$,  such that $$NF (d) (f) = f'. 
$$  And then by imposing symmetry and
transitivity. Denote the equivalence class of $(f, \Sigma)$ by $[f, \Sigma]$.    
   
The following is not formally necessary, but it
   might be helpful for visualization.
\begin{lemma}
      \label{lemma:canonicalrespresentative} 
   Each class $[f, \Sigma] $ has a unique
   natural  representative $(f _{c}, \Sigma _{c})$  so
   that if $f$ is a $k$-simplex then $\Sigma
   _{c}$ is a $k$-simplex.    
   \end{lemma}
  \begin{proof} Let $(f, \Sigma ) $ as above be
     given,  with $\deg (f) =k$.  And
     let $(f _{min}, \Sigma _{min}) \in
     \widetilde{C} $  be as above. Then 
     $\deg(\Sigma _{min}) \leq (n = \deg (f
     _{min}))$.   Suppose that  $\deg(\Sigma
     _{min}) <
     n$. Then by the extension construction of Section \ref{section:extension} there is a degeneracy $$d _{c}:
     \Sigma _{c} \to \Sigma _{\min},$$  
   with $\deg (\Sigma _{c}) =k$ together with a $k$-simplex
  $f _{c} \in NF (\Sigma _{c})  $   so that $NF
     (d _{c}) (f _{c}) = f _{min}.  $  Moreover, $(d
     _{c}, f _{c})$  are uniquely determined, (by
     the extension construction). So
     that we set $\Sigma  _{c} = \Sigma
     _{min}\circ d _{c} $.  Also note that $\Sigma
     _{c}$ is just $p _{\Sigma} (f _{min}) $, with
     $p _{\Sigma}$ as in \eqref{eq:pSigma} ahead.
   \end{proof}

Continuing with the proof of the proposition, we then define $C =\widetilde{C}  /
\sim$. This is naturally a simplicial set, with $$C (k) = \{[f, \Sigma] \in C \, \vert \, f \in NF
   (\Sigma) (k)\}.
    $$ 
   For example $C (0)$ is naturally isomorphic
   to $$\sqcup
_{x \in X} Obj \, F  (x).$$

The following in particular will give a direct proof
that the colimit \eqref{eq:FukF} exists.
\begin{lemma} \label{lemma.colimit} $C = colim _{\Delta (X)} 
   N{F} $, 
   with equality meaning natural
   isomorphism.   
\end{lemma}
\begin{proof}   Note first that $C$ is a co-cone on the diagram $NF $. Indeed, for
each $\Sigma$ define $\phi _{\Sigma}: NF   (\Sigma) \to C$ by $$\phi _{\Sigma} (f) = [f _{min} , {\Sigma} _{\min}   ].$$ 

 It is easy to see that for a face morphism $i: \Sigma
\to \Sigma'$ we have that the composition $$NF  (\Sigma) \xrightarrow{NF
  (i)} NF  (\Sigma')
\xrightarrow{\phi _{\Sigma'}} C,$$  coincides with $\phi _{\Sigma} $.
Likewise for a degeneracy morphism 
$d: \Sigma
\to \Sigma'$ we have that the composition $$NF  (\Sigma) \xrightarrow{NF
  (d)} NF  (\Sigma')
\xrightarrow{\phi _{\Sigma'}} C,$$  coincides with $\phi _{\Sigma} $,
because of  the equivalence relation $\sim$.

The universal property is also easy to verify, for given another
co-cone $C'$ with maps $\rho _{\Sigma}: NF   (\Sigma) \to C'$, $\Sigma \in
   \Delta  (X)$ we can naturally define $U: C \to C'$ by $$U ([f, \Sigma]) = \rho  _{\Sigma}
(f).$$ Then $U$ is clearly well-defined, by $C'$
   being a co-cone. And moreover, $U$ is a map of
   co-cones (all the relevant diagrams
   commute). Since for a given $f \in NF
   (\Sigma)$, we have  $$U (\phi
   _{\Sigma} (f)) = \rho _{\Sigma _{\min} } (f _{\min} ) = \rho
   _{\Sigma} (f),$$
   where the last equality holds since $(C', \{\rho _{\Sigma}\})$ is a co-cone, and
 since by construction there is a morphism  $$i: NF (\Sigma _{\min} ) \to NF (\Sigma),$$
   with $F (i) f _{\min}  = f $. 
\end {proof}

Continuing with the proof of the proposition, recall that
a given $\Sigma: \Delta ^{n} \to X $ could equally be
thought of as an element of $X _{\bullet} (n)$ or as
a simplicial map $\Delta^{n} _{\bullet} \to X _{\bullet} $.
In what follows $\Sigma$ denotes all these objects simultaneously.
With this understanding, for each $n$-simplex $\Sigma \in \Delta (X) $, we have a natural simplicial map 
\begin{equation}
   \label{eq:pSigma}
   p _{\Sigma}:
   N {F} (\Sigma) 
   \to {\Sigma} ({\Delta} ^{n} _{\bullet}) \subset X
   _{\bullet},
\end{equation}
defined as follows. On the vertices of $N {F} (\Sigma)$, $p
_{\Sigma} $ is just the obvious projection. That is a vertex 
$v \in NF (\Sigma) (0) $ corresponds to an element of $NF (x
_{i}) $ for a uniquely determined $x _{i} \in \Sigma (\Delta ^{n} _{\bullet }) (0)  $, and $p _{\Sigma } (v)  = x _{i}$.

Given a $k$-simplex $f$ in $N F (\Sigma) (k) $, we get
a composable chain $(f _{1}, \ldots, f _{k})$, with $f _{i}$
the edge of $f$ between its $i-1$'st and $i$'th vertex, for $1 \leq i \leq k$. This  determines a list of vertices $v_0, \ldots, v_k \in NF (\Sigma) $ s.t. the source/target of $f_i$ is $v _{i-1}$ respectively $v _{i} $.
This in turn determines a list of vertices $\{p _{\Sigma} (v _{i}
)\} \subset {\Sigma} ({\Delta} ^{n} _{\bullet}) (0)  $,  and we set $p _{\Sigma} (f) $ to be the unique (possibly degenerate)
$k$-simplex in ${\Sigma} ( {\Delta} ^{n} _{\bullet}) (k) $ with
these vertices. We will omit the verification that $p _{\Sigma} $ is
simplicial. 
   
The simplicial projection  $$p: C  \to X _{\bullet}$$ is then: send $ [f, \Sigma]$ to $p _{\Sigma} (f)$, which is readily seen to be well-defined.

 It is immediate from the definition of
an inner fibration in Section \ref{sec:innerfibrations} that $p$ is an inner-fibration if
and only if the pre-image of every simplex $\Sigma: \Delta ^{n} _{\bullet} \to X
   _{\bullet} $ by $p$ is a $\infty$-category, where the ``pre-image'' $p ^{-1} (\Sigma) $ is the pre-image by $p$ of the simplicial subset $\Sigma  (\Delta ^{n} _{\bullet} ) $.
In our case this follows by construction, as $p ^{-1} (\Sigma ) $ is clearly identified with $NF  (\Sigma)$, which is an $\infty$-category by properties of $N$.

We now verify that in addition $p$ is a
categorical fibration.   By the Definition
\ref{def:categoricalFibration} we
need to show that for every equivalence $m: a \to
b$ in $\mathcal{X} = {X} _{\bullet} $  and
for every object $a' \in Fuk _{\infty}(P) $ with
$p(a') = a$, there exists an equivalence
$\widetilde{m}: a'  \to b'$ in $\mathcal{E} = Fuk
_{\infty}(P) 
$ with $p(\widetilde{m}) = f$.

 \begin{lemma} \label{lemma.weakequiv} The functor $N: A_{\infty}-Cat
 ^{unit} \to \infty- \mathcal{C}at$,  takes quasi-equivalences to weak
equivalences in the Joyal model structure, i.e.
categorical equivalences.  
\end{lemma}
\begin{proof} The proof of this is contained in the proof of Proposition
1.3.1.20, Lurie \cite{citeLurieHigherAlgebraa}. We can also prove this directly by first recalling
that quasi-equivalences of $A _{\infty}
$-categories $A,B$ are invertible (when working
over a field with characteristic $0$), up
to homotopy, and then  via the nerve construction translate this to a categorical
equivalence of $N (A), N (B) $. 
\end {proof} 
Recall that the morphisms of $A _{\infty}-Cat$ are
in particular quasi-equivalences.
Since the inclusions $F (x _{i}) \to F (m) $ are
quasi-equivalences by Lemma \ref{lemma:functorialF}, it follows
by the lemma above, and by the construction of
$L$ that the inclusions of $L_{i} $ into
$L_{m} $ are categorical equivalences of
   $\infty$-categories, and so $\widetilde{m}$ as above must exist.
\end {proof}
\begin{proof} [Proof of Theorem \ref{prop:quasicatfibration}] 
By the discussion above we have a categorical fibration
$$Fuk _{\infty} (P, \mathcal{D})   \to X _{\bullet}  .$$
The first part of the theorem follows by the following general fact: for an inner fibration of
simplicial sets $$p: P _{\bullet} \to X _{\bullet},$$ if $X _{\bullet}$ is a $\infty$-category 
then $P _{\bullet}$ is a $\infty$-category. Let us prove this elementary point.
Suppose we are given  $${\rho}: \Lambda ^{n} _{k} \to P _{\bullet}$$  for $0< k < n$.
As $X _{\bullet}$ is a $\infty$-category there a simplex $$ \widetilde{\rho}: \Delta
   ^{n} _{\bullet}  \to X _{\bullet}$$ extending $p \circ \rho$.  But then
  $\rho$ maps into the $\infty$-category 
 $p ^{-1} ( \widetilde{\rho})$, and consequently  there is an
 extension of $\rho$, c.f. Proposition \ref{prop.innerfib2}.

 The final part of the theorem follows by the following.
\begin{lemma} For the geometric functor $F _{P,
   \mathcal {D}} $ the concordance class
   of the categorical fibration $p: Fuk _{\infty} (P, \mathcal {D}) \to
X _{\bullet}$ is
independent of the choice of $ \mathcal {D} $. 
\end{lemma}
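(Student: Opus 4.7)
My plan is to promote the concordance invariance of the pre-$\infty$-functor (Theorem \ref{thm.concordance}) to an equivalence of fibrations, using naturality (Lemma \ref{natural.1}) together with the homotopy invariance of (co)-Cartesian fibrations under pullback along homotopic base maps provided by Lurie's straightening theorem.

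\smallskip

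First, given choices $\mathcal{D}_0, \mathcal{D}_1$, apply Theorem \ref{thm.concordance} to produce auxiliary data $\mathcal{D}$ on $X \times I$ such that the induced pre-$\infty$-functor $T = F_{\pi^{*}P, \mathcal{D}}: Simp(X \times I) \to A_\infty\text{-}Cat^{unit}$ restricts over $X \times \{k\}$ to $F_{P, \mathcal{D}_k}$ for $k = 0, 1$, where $\pi: X \times I \to X$ is the projection. Now form the colimit
\begin{equation*}
Fuk_\infty(\pi^{*}P, \mathcal{D}) \;=\; \colim_{\Delta/(X \times I)_\bullet} N \circ T \;\longrightarrow\; (X \times I)_\bullet,
\end{equation*}
which by the already-proven portion of Theorem \ref{prop:quasicatfibration} is a (co)-Cartesian fibration.

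\smallskip

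Next I would verify that the inclusion $i_k: X \hookrightarrow X \times I$ pulls this fibration back to $Fuk_\infty(P, \mathcal{D}_k)$. Using Lemma \ref{natural.1}, the restriction of $T$ along the induced functor $(i_k)_{*}: \Delta/X_\bullet \to \Delta/(X \times I)_\bullet$ is exactly $F_{P, \mathcal{D}_k}$. The explicit model for the colimit given in Lemma \ref{lemma.colimit} (whose simplices are pairs $(f, \Sigma)$ with $\Sigma$ ranging over the base and $f$ over $NF(\Sigma)$) is manifestly compatible with pulling back along a monomorphism of the base simplicial sets, so the pullback of $Fuk_\infty(\pi^{*}P, \mathcal{D})$ along $i_k: X_\bullet \hookrightarrow (X \times I)_\bullet$ is canonically isomorphic to $Fuk_\infty(P, \mathcal{D}_k)$ as objects of $sSet/X_\bullet$.

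\smallskip

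Finally, it suffices to show that $i_0$ and $i_1$ induce equivalent pullback functors on (co)-Cartesian fibrations over $(X \times I)_\bullet$. Since $i_0, i_1$ are both sections of the simplicial weak equivalence $\pi_{*}: (X \times I)_\bullet \to X_\bullet$, they represent the same class in $\pi_0 Map(X_\bullet, (X \times I)_\bullet)$; equivalently, they are homotopic as maps of Kan complexes. By Lurie's straightening theorem \ref{thm.straightening}, a (co)-Cartesian fibration over $(X \times I)_\bullet$ corresponds to a functor $\Phi: (X \times I)_\bullet \to \mathcal{C}at_\infty$, and pullback along $i_k$ corresponds to $\Phi \circ i_k$. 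Homotopic maps into $\mathcal{C}at_\infty$ become equivalent objects of $Func(X_\bullet, \mathcal{C}at_\infty)$, and unstraightening sends equivalent functors to equivalent (co)-Cartesian fibrations in $sSet/X_\bullet$. Combining the two identifications in the previous paragraph yields the desired equivalence $Fuk_\infty(P, \mathcal{D}_0) \simeq Fuk_\infty(P, \mathcal{D}_1)$ over $X_\bullet$.

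\smallskip

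The step I expect to require most care is the base-change compatibility: confirming that the colimit defining $Fuk_\infty$ genuinely commutes with restriction along the inclusion of a simplicial subset of the base. This is not a formal property of colimits in $sSet$ in general, but it holds here because the explicit description in Lemma \ref{lemma.colimit} exhibits the total space as a fibered union, and because $F_{P, \mathcal{D}}$ takes face maps to fully-faithful embeddings, which is what made Lemma \ref{lemma.colimit} function in the first place.
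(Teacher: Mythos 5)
Your proposal is correct and follows essentially the same route as the paper: invoke Theorem \ref{thm.concordance} to produce a pre-$\infty$-functor over $Simp(X \times I)$, form the associated (co)-Cartesian fibration over $(X \times I)_{\bullet}$ restricting at the two ends to $Fuk_{\infty}(P,\mathcal{D}_0)$ and $Fuk_{\infty}(P,\mathcal{D}_1)$, and conclude via Lurie's straightening theorem. Your extra care about base-change compatibility of the colimit and about the two end inclusions being homotopic sections simply fills in details the paper leaves implicit.
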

\begin{proof} By Theorem \ref{thm.concordance} 
given a pair $\mathcal{D} _{0}, \mathcal{D}
   _{1}$     of perturbation data for $P$, 
there is a geometric functor:
\begin{equation*}
   \widetilde{F}: \Delta (X \times I) \to A
   _{\infty}-Cat ^{unit}, 
\end{equation*} 
which gives a concordance of the functors
$${F} _{P, \mathcal{D} _{i}}: \Delta (X) \to A
   _{\infty}-Cat ^{unit}. 
$$ 
Then by the Proposition \ref{propostion.simplicialmap} there exists an categorical fibration:
\begin{equation*}
\mathcal{T} \to X _{\bullet}  \times I _{\bullet},
\end{equation*}
whose restriction over $X _{\bullet}  \times \partial I _{\bullet} $ coincides with $$Fuk
_{\infty} (P, \mathcal {D}_{0} ) \sqcup Fuk
_{\infty} (P, \mathcal {D}_{1} ).$$ 
\end {proof}
\end {proof}
\subsection {Naturality} \label{sectionNaturalityPullback} We
may expect if our constructions are really natural
   that the categorical fibration $$N Fuk (M,
   \omega) \hookrightarrow  Fuk _{\infty} (P,
   \mathcal{D}) \to X _{\bullet}$$ is functorial with respect to pull-back and this is indeed the case. 
Let $f: X \to Y$ be a smooth map, $P \to Y$ a smooth
Hamiltonian fibration and $\mathcal{D} =
   \mathcal{D} (P) $ extended
perturbation data. We may then 
define pull-back extended perturbation data $f ^{*} \mathcal{D} $ for $f ^{*}P
   \to X$, as follows.
First we have the ``pull-back''  natural system
   $\mathcal{U} (X) $, defined by $$u (m _{1},
   \ldots, m _{s}, \Sigma) := u (m _{1},
   \ldots, m _{s}, \widetilde{\Sigma } = f \circ \Sigma
   ),$$ where the maps $u$   on the right are part
   of $\mathcal{U} (Y) $. 
Next, let $$\widetilde{f}: f ^{*}P \to P$$
   be the natural bundle map, then given $\Sigma \in X _{\bullet} (d) $ we set
$$ \mathcal{F} (L_0, \ldots , L_s, {\Sigma},r) =
   \mathcal{F} (\widetilde{f} (L_0), \ldots,
   \widetilde{f} (L_s), \widetilde{\Sigma},r), $$ for $\widetilde{\Sigma}= f \circ \Sigma  $.
This determines our data $f ^{*}\mathcal{D} $.

   Let
   ${f} _{\bullet} : X _{\bullet} \to Y _{\bullet}   $ be the induced map of
   simplicial sets.
\begin{theorem} \label{thmNaturality} $$Fuk _{\infty} (f ^{*} P, f ^{*}
   \mathcal{D}) = {f} ^{*} _{\bullet}   Fuk
   _{\infty} (P, \mathcal{D}), $$ where $f ^{*} \mathcal{D}$ is
   as above, and where  ${f} ^{*} _{\bullet}   Fuk
   _{\infty} (P,  \mathcal{D}) $  denotes the
   standard pull-back of the
   simplicial fibration by $f _{\bullet }$.  
\end{theorem}
\begin{proof} The proof is immediate.
\end{proof}

\appendix
\section {$\infty$-categories and Joyal model structure} \label{appendix.quasi}
We don't need absolutely everything in this section, particularly we can avoid ever mentioning model categories, but the latter helps with the narrative. A very good concise reference for much of this material is Riehl  
\cite{citeRiehlAmodelstructureforquasi-categories},
which we will mostly follow. The material on
various fibrations $\infty$-categories is taken
from
Lurie \cite[Section 2.4]{citeLurieHighertopostheory}. First let us recall the notion of a Kan
complex, which may be thought of as formalizing
the property of a simplicial set to be like the
singular set of a topological space, defined in
Section \ref{section:simplexCategory}.

 Let
$\Delta ^{n} $ be the standard representable $n$-simplex: $\Delta
    (i) =
\Delta ([i], [n])$. Previously we denoted this by $\Delta ^{n} _{\bullet}   $,
   but as there are no topological simplices in this section we simplify the
   notation, which is also consistent with above references.
   Let $\Lambda ^{n} _{k} \subset \Delta ^{n}$ denote the
sub-simplicial set corresponding to the ``boundary'' of $\Delta ^{n}  $ with the
$k$'th face removed, $0 \leq k \leq n$. By $k'th$ face we mean the face opposite
to $k$'th vertex.  This is called  the 
 \textbf{\emph{$k$'th horn}} or just horn. 

A simplicial set $S _{\bullet}$ is said
to be a \emph{Kan complex} if for all $n,k$ given a diagram with solid arrows
\begin{equation*} 
\begin {tikzcd}
   \Lambda ^{n} _{k} \ar [r]  \ar [d] & S _{\bullet} \\
 \Delta ^{n} \ar [ur, dotted]  &, \\ 
\end{tikzcd}
\end{equation*}
there is a dotted arrow making the diagram commute.

An \emph{$\infty$-category} is a simplicial set $S
_{\bullet}$ for which the above extension property
is only required to hold for \emph{inner horns}  $\Lambda ^{n} _{k}$,
i.e. those horns with $0<k<n$.  A Kan
complex is a simplicial model of an $\infty$-groupoid, 
as the Kan condition can be interpreted as giving
the condition that morphisms are invertible up to
(coherent) homotopy. Likewise, the  defining
property of an $\infty$-category tells us that
while 1-morphisms/edges may not be invertible (up to
homotopy),  the higher
morphisms are. It would perhaps take us
too far afield too further motivate $\infty$-categories
here. 
However, the introductory sections of Lurie~
~\cite{citeLurieHighertopostheory}   should be
highly accessible.

A morphism between   $\infty$-categories is just a
simplicial map.
We will denote $\infty$-categories by
calligraphic letters e.g. $ \mathcal {B} $.  In
\cite[Chapter 3]{citeLurieHighertopostheory} an
$\infty$-category of $\infty$-categories is
constructed, with 1-morphisms simplicial maps, and we call this $\mathcal
{C}at _{\infty}$. On the other hand the
full-subcategory of  the category of simplicial
sets with objects $\infty$-categories will be
denoted by $\infty-Cat$.       
\begin{notation}
We denote the maximal Kan subcomplex of
   $\mathcal{C}at _{\infty} $ by
$\mathbb{S} $.
\end{notation}

\subsection {Categorical equivalences, morphisms and equivalences}
\label{section:prelimQuasi} We have a
natural functor $\tau: sSet \to Cat$, defined as
follows. $\tau (S _{\bullet})$ is the category with objects 0-simplices of $S
_{\bullet}$, 1-simplices as morphisms, degenerate 1-simplices as identities and
freely generated composition subject to the relation $g= f \circ h$ if there is
a 2-simplex $e$ with 0-face $h$, 2-face $f$ and
1-face $g$. 
(Remembering our diagrammatic order for
composition.) See the figure below.     
\begin{figure}[h]
  \includegraphics[width=1.5in]{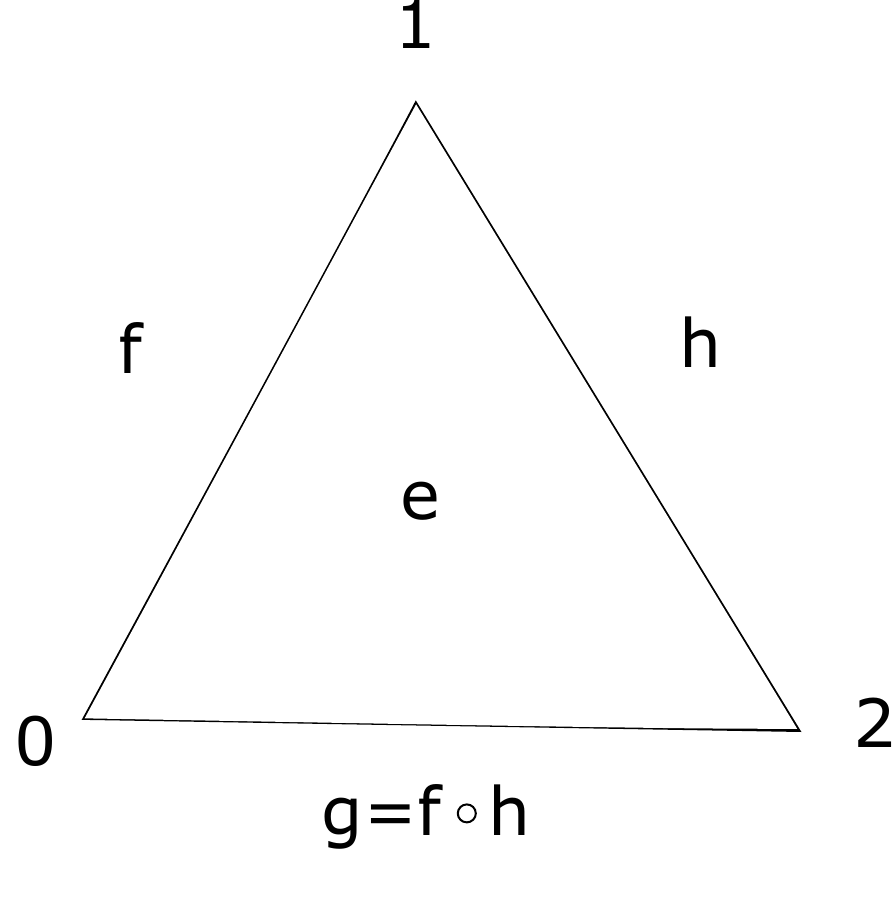}
 \caption {} \label{figure:simplex}
\end{figure} 
The category $\tau (S _{\bullet }) $  may be
understood as the \emph{fundamental category} of
$X _{\bullet }$    in analogy to the fundamental
groupoid.

We also have a functor $\tau _{0}: sSet \to Set$ by sending $A _{\bullet}$ to the set of isomorphism classes of objects in $\tau
(A _{\bullet})$.

If $S _{\bullet} = \mathcal {X}$ is a $\infty$-category an edge $e
\in \mathcal {X} $, i.e. a 1-simplex $e: \Delta
^{1} \to \mathcal{X} $,  is said to be an
\emph{equivalence} if $\tau (e) $ is an isomorphism
in $\tau \mathcal ({X}) $. We may use morphism
notation, so that $e: a \to
b$ signifies that the edge $e$ goes   from the
vertex $a$ to
$b$.

The \emph{maximal Kan subcomplex}  of a $\infty$-category $
\mathcal{X} $ is the maximal sub-simplicial set 
$K (\mathcal{X} ) \subset \mathcal{X} $ with all
edges equivalences.  (It can be constructed simply
by removing edges that are not equivalences, and
all simplices containing them.) 
The fact that $K (\mathcal{X} ) $
is forced to be a Kan
complex can be readily verified using the
$\infty$-category structure of $\mathcal{X} $.
(This is an instructive exercise.) 

\begin{definition}\label{def:connectedComponent}
Let $\mathcal{X} $ be a Kan complex and $a \in
   \mathcal{X} (0)  $. A \textbf{\emph{connected
   component}}  of $a$  is the set of vertices sharing an
   edge with $a$. We may sometimes
   denote such a connected component by
   $(\mathcal{X}, a) $.
\end{definition}

We define $sSet ^{\tau_0}$ to be the category with the same objects as $sSet$
but with the morphisms given by $sSet ^{\tau_0} (A _{\bullet}, B _{\bullet}) =
\tau ^{0} (B _{\bullet} ^{A _{\bullet}})$. 
A map of simplicial sets $$u: A _{\bullet} \to B
_{\bullet}$$ is
said to be a \emph{categorical equivalence} if the induced map in $sSet ^{\tau
_{0}}$ is an isomorphism. 

 We will say that a pair of $\infty$-categories
are \emph{categorically equivalent} if
there is categorical equivalence between them.   
As we
are following Riehl \cite{citeRiehlAmodelstructureforquasi-categories}, we refer the reader there for the following:
\begin{theorem} \label{thm:joyal}[Joyal, Lurie,
   Riehl] There is a
model structure on $sSet$, called the Joyal
model structure so that the fibrant objects are
$\infty$-categories and a weak 
equivalence between $\infty$-categories is a categorical equivalence.
\end{theorem} 
We do not formally need the above theorem, but we
hope it places things into some perspective,  by
making $\infty$-categories a less mysterious object.
\subsection {Inner fibrations}
\label{sec:innerfibrations}
A map $p: \mathcal {A}\to \mathcal {B}$ of
$\infty$-categories is said to be an \emph{inner fibration}
if it has the lifting property with respect to all inner horn inclusions. More
specifically, for $ 0< k < n$, whenever we are given a commutative diagram with 
solid arrows:
\begin{equation} 
\begin{tikzcd} \label{diagram:inner}
   \Lambda ^{n} _{k}  \ar [r]  \ar[d, hookrightarrow] &
\mathcal {A} \ar [d, "p"]  \\
 \Delta ^{n}  \ar [r] \ar [ur, dashrightarrow] & \mathcal {B}, \\
\end{tikzcd}
\end{equation}
there exists a dashed arrow as indicated, making the whole diagram commutative. 

For reference $p$ is said to be
an \emph{Kan fibration} if the above extension property holds for all horns. A
Kan fibration is an analogue in the simplicial world of Serre fibrations of
topological spaces. The following is immediate from definitions. 
\begin{proposition} \label{prop.innerfib2} A map
   $p: \mathcal{A} \to \mathcal{B}$ is an inner fibration, if and only if the pre-image of every simplex
of $\mathcal{B}$ is a $\infty$-category. 
\end{proposition}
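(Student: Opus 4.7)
The plan is to interpret ``pre-image of a simplex $\sigma$ of $B_\bullet$'' as the pullback simplicial set $p^{-1}(\sigma) = A_\bullet \times_{B_\bullet} \Delta^n$ associated with the classifying map $\sigma: \Delta^n \to B_\bullet$. (The special case $n=0$ gives the ordinary fiber over a vertex.) With this interpretation both implications reduce to a formal unpacking of the inner-horn lifting property.

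For the forward direction, suppose $p$ is an inner fibration, and fix $\sigma: \Delta^n \to B_\bullet$. Inner fibrations are stable under pullback: given any commutative square testing the inner horn lifting property for the pullback $q: A_\bullet \times_{B_\bullet} \Delta^n \to \Delta^n$, the outer square transfers the problem to $p$, and the lift produced by $p$ automatically factors through the pullback by the universal property. Thus $q$ is itself an inner fibration. Since $\Delta^n$ is a quasi-category (being the nerve of the totally ordered set $[n]$), the argument already used in the proof of Theorem~\ref{prop:quasicatfibration} applies verbatim: given an inner horn $\Lambda^m_k \to A_\bullet \times_{B_\bullet} \Delta^n$, we extend its image in $\Delta^n$ to a full $m$-simplex using that $\Delta^n$ is a quasi-category, then use the inner fibration property of $q$ to lift. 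Hence the pullback is a quasi-category.

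For the converse, assume each pullback $A_\bullet \times_{B_\bullet} \Delta^n$ is a quasi-category, and consider a commutative square
\begin{equation*}
\xymatrix{\Lambda^n_k \ar[r]^{\rho} \ar@{^{(}->}[d] & A_\bullet \ar[d]^p \\ \Delta^n \ar[r]^{\sigma} & B_\bullet}
\end{equation*}
with $0 < k < n$. The two maps $\rho$ and the inclusion $\Lambda^n_k \hookrightarrow \Delta^n$ agree after postcomposing with $p$ and $\sigma$ respectively, so by the universal property of the pullback they assemble into a map $\tilde{\rho}: \Lambda^n_k \to A_\bullet \times_{B_\bullet} \Delta^n$. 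Since the target is a quasi-category by hypothesis, $\tilde{\rho}$ extends to a map $\Delta^n \to A_\bullet \times_{B_\bullet} \Delta^n$. Composing with the projection to $A_\bullet$ furnishes the required diagonal filler, so $p$ is an inner fibration.

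There is no real obstacle in this argument: the work is entirely in choosing the right interpretation of ``pre-image of a simplex'' and then invoking the universal property of pullbacks together with stability of inner fibrations under pullback. The only subtle point, which could trip up a reader, is the forward direction's reliance on the non-trivial fact that an inner fibration over a quasi-category has total space a quasi-category, but this has already been established in the proof of Theorem~\ref{prop:quasicatfibration}.
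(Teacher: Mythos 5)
Your interpretation of ``pre-image of a simplex'' as the pullback $A_\bullet\times_{B_\bullet}\Delta^n$ along the classifying map $\sigma\colon\Delta^n\to B_\bullet$ is the right one (it is how the statement is used in the proof of Proposition \ref{propostion.simplicialmap}), and your overall argument is the routine unpacking that the paper treats as immediate from the definitions. The forward direction is complete: inner fibrations are stable under pullback by exactly the universal-property argument you give, $\Delta^n$ is a quasi-category, and the fact that an inner fibration over a quasi-category has quasi-category total space is proved in the paper in the course of Theorem \ref{prop:quasicatfibration}.

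In the converse direction, however, you assert the one point where the content actually lies. Writing $q$ and $\pi$ for the projections of $A_\bullet\times_{B_\bullet}\Delta^n$ to $\Delta^n$ and $A_\bullet$, the extension $E\colon\Delta^n\to A_\bullet\times_{B_\bullet}\Delta^n$ of $\tilde\rho$ gives a map $\pi\circ E$ that restricts to $\rho$ on $\Lambda^n_k$, but ``the required diagonal filler'' must also make the lower triangle commute, i.e.\ $p\circ\pi\circ E=\sigma$. A priori $p\circ\pi\circ E=\sigma\circ q\circ E$, and $q\circ E$ is just some endomorphism of $\Delta^n$ restricting to the inclusion on the horn, so this needs an argument. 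The fix is one line: since $0<k<n$ forces $n\ge 2$, the inner horn $\Lambda^n_k$ contains every vertex of $\Delta^n$, and a simplicial map $\Delta^n\to\Delta^n$ (equivalently an order-preserving map $[n]\to[n]$) fixing all vertices is the identity; hence $q\circ E=\mathrm{id}_{\Delta^n}$ and $\pi\circ E$ is indeed a filler lying over $\sigma$. (Note this is exactly what fails for the $1$-dimensional outer horns, which is why the analogous characterization does not hold for Kan fibrations.) With that observation inserted, your proof is correct.
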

\subsection {Categorical fibrations} These are
the analogues of Serre fibrations for the Joyal
model structure. 

\begin{definition}\label{def:categoricalFibration} 
We say that $p: \mathcal{E} \to \mathcal{X} $
is a \textbf{\emph{categorical fibration}} if:
  \begin{enumerate}
      \item  The map $p$ is an inner fibration.
\item For every equivalence $f: a \to b$ in
   $\mathcal{X} $  and every object $a' \in
        \mathcal{E} $ with $p(a') = a$, there
        exists an equivalence
        $\widetilde{f}: a'  \to b'$ in $\mathcal{E}
        $ with $p(\widetilde{f}) = f$.
   \end{enumerate} 
\end{definition} 
\begin{definition}
   \label{def:concordanceCartesian} 
We say that a pair of categorical fibrations
$p _{i}: \mathcal{P} _{i} \to \mathcal{X}  $,
   $i=0,1$  over
a Kan complex $\mathcal{X} $ are
\textbf{\emph{concordant}} if the following
holds. 
There is a categorical fibration $$\mathcal{Y}  \to \mathcal{X}
\times \Delta ^{1},$$ whose
pull-back by $i _{0}: \mathcal{X}   \to
\mathcal{X}  \times \Delta ^{1}    $
is identified with $\mathcal{P} _{0}$  and whose
   pull-back by $i _{1}: \mathcal{X}  \to
\mathcal{X}  \times \Delta ^{1} _{\bullet}   $
is identified with $\mathcal{P} _{1} $. Here the two maps $i_{0}, i_{1}$ correspond to the two vertex inclusions $\Delta ^{0} _{\bullet}  \to \Delta ^{1} _{\bullet}   $.
\end{definition}
\begin{notation} 
  \label{} 
We shall denote the set of concordance classes of
categorical fibrations over a Kan complex $\mathcal{X} $  by
$Fib _{\infty}(\mathcal{X} ) $.
\end{notation}

For convenience we recall the basic definition.
\begin{definition}\label{def:simplicialhomotopyofmaps}
   We say that a pair of maps of simplicial sets
   $f,g: A _{\bullet } \to B _{\bullet }$ are
   \textbf{\emph{homotopic}} if  there is map of
   simplicial sets
   \begin{equation*}
      F: A _{\bullet } \times \Delta ^{1} \to B
      _{\bullet},  
   \end{equation*}
  so that $F| _{A _{\bullet} \times
   \{0\}} =f$  and $F_{\bullet}| _{B _{\bullet}
   \times \{1\}} =g$. 
\end{definition}
In \cite[Section
3.3.2]{citeLurieHighertopostheory} Lurie
  constructs a universal  categorical fibration over
$\mathbb{S}$.  More specifically he constructs a
universal Cartesian fibration over $
\mathcal{C}at _{\infty}$. It's restriction over
$\mathbb{S}$  is a categorical fibration by ~\cite
[Proposition 3.3.1.8.]{citeLurieHighertopostheory}.  
  As a direct consequence we have the
following theorem. 
\footnote{The statement should be interpreted with care, since
there are set theoretic issues. The most natural
(and arguably standard)
interpretation is via Grothendieck universes, 
as for example done explicitly in
~\cite{citeSavelyevSmoothSimplicial}, in a very similar
context. But we ignore these subtleties here.}

\begin{theorem} [Lurie ~\cite{citeLurieHighertopostheory}]  \label{corollaryStraightening}
For a Kan complex $\mathcal{X}$, there is a natural 
isomorphism  $$Fib _{\infty} (\mathcal{X} ) \simeq
[\mathcal{X},  \mathbb{S}],$$          
with $[\mathcal{X},  \mathbb{S}]$  denoting the
``set'' of homotopy classes of maps $\mathcal{X}
\to \mathbb{S}  $.
\end{theorem} 
\subsection{ $A_\infty$-nerve}
\label{appendix:nerve}  This section
mostly follows Tanaka \cite[2.3]{citeLeeAfunctorfromLagrangiancobordismstotheFukayacategory}, except that for us
everything will be ungraded, and for simplicity with $ \mathbb{F}_2$-coefficients. 

For $ [n] \in \Delta$,  a \emph{length $s$ wedge decomposition} of
$[n]$ is a collection of monomorphisms in $\Delta$
\begin{equation*} j_i: [n_i] \to [n], \quad i=1,\ldots,s,
\end{equation*} 
satisfying  the following properties:
\begin{itemize}
      \item $\forall i:  n _{i} \geq 1.$
      \item  $1 \in \image (j _{1}) $, $n \in
         \image (j
      _{s})$.  
\item $\forall 2 \leq i \leq n: \max _{[n _{i}-1] }
   j _{i-1} = \min _{[n _{i}] } j_{i}$ 
\end{itemize}
%

We denote the set of all
decompositions of $[n]$ by $D  [n]$.   We may of
course equally understand a length $s$
decomposition as a finite set  $\{J _{1}, \ldots J 
_{s}  \}$ of subsets of $[n]$ decomposing $[n]$. 
However, it is a bit simpler to formulate the following in terms of the maps $j _{i} $.
\begin{definition}
\label{def.nerve} For $  {A}$ a small unital $A _{\infty}$ category its
nerve $N ({A})$ is a simplicial set with the set of vertices  the set
of objects of ${A}$. A $n$-simplex $f$ of $N (  {A})$ consists of
the following data:
\begin{itemize}
   \item  A map $[n] \to Objects ({A})$. We denote the corresponding objects $X _{0}, \ldots, X _{n}$. 
\item For each mono-morphism $j: [n _{j}] \to [n]$
   in $\Delta$, with $n _{j} \geq 1$,
an element
\begin{equation*} f _{j} \in hom _{  {A}} (X _{j (0)}, X _{j
(n _{j})}).
\end{equation*} 
We may completely characterize each such $j $ by its image set, and will sometimes write $j $ for the corresponding set and vice
versa, thus $f _{ [n]} $ corresponds to the identity $j: [n] \to [n]$.
\item For a given $j: [n _{j}] \to  [n] $,
   and $i \in [n _{j}] $ 
   denote by $j - j(i): [n _{j} -1] \to [n]$ the
unique morphism in $\Delta$  with image set $j-j(i) $. Then the collection of these $f _{j}$ is required to satisfy the
following equation:
 \begin{equation} \label{eq.simplex} \mu ^{1} (f _{ j}) = \sum _{0 < i < n
 _{j}} f _{ j- j (i)} + \sum _{s \geq 2} \sum
    _{decomp _{s} \in D [n _{j}]} \mu ^{s} (f _{
       {j \circ j_1}},
 \ldots, f _{{j \circ j _{s}}}),
\end{equation}
with $decomp _{s} \in D [n_j]$ denoting a length $s$ decomposition and ${j_i} $, $1 \leq i \leq s$,  its elements.
This also corresponds to the discussion in
Section \ref{sec:OutlineAinftyNerve} describing
      the case of $2$-simplices of $N (A) $.   
\end{itemize} 
\end{definition}
The simplicial maps are as follows. Given an injection $k: [m] \to [n]$  and an
$n$ simplex $f $,  define an $m$-simplex $f'$ by
$\{f' _{j} = f _{k \circ j}\}$, where $j: [l] \to
[m] $ is an injection.

On the other hand, given the unique surjection in $\Delta
$:  $s _{i} : [n+1] \to [n]$, $s _{i} (i+1) = s _{i} (i)$,
and given an $n $-simplex $f $, define an $(n+1)$-simplex $f' $ by setting 
\[f' _{j} = \left \{ \begin{array}{ll}  
         e _{X_i} & \mbox{if $j= \{i, i+1\}$ };\\
        f _{s _{i}  \circ j} & \mbox{if $s _{i}|  _{j}$  is
        injective}. \\ 
        0 & \mbox {otherwise},
          \end{array} \right.
        \]
for $j: [l] \to [n+1] $ an injection. It is straightforward but tedious to
verify that the latter is indeed a face and that simplicial relations are
satisfied. On the other hand, Faonte
~\cite{citeFaonteSimplicialNerve} given a
conceptual construction of the above nerve so that
the above is automatic.
 \begin{proposition}
 \cite[2.3.2]{citeLeeAfunctorfromLagrangiancobordismstotheFukayacategory},
 \cite{citeFaonteSimplicialNerve}
\label{proposition.quasicat} For $ {A}$ a unital $A _{\infty}$ category its nerve $ \mathcal {A}= N ( 
A)$ is a $\infty$-category.
\end{proposition}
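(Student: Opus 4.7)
The plan is to verify the inner horn extension property directly. Fix $0 < k < n$ and a map $\phi \colon \Lambda^{n}_{k} \to N(\mathcal{A})$; I must produce an extension to $\Delta^{n}$. Unravelling Definition \ref{def.nerve}, the datum $\phi$ is exactly an assignment of $f_{j} \in \hom_{\mathcal{A}}(X_{j(0)}, X_{j(n_{j})})$ for every monomorphism $j \colon [n_{j}] \to [n]$ with $|n_{j}|\geq 2$ whose image is contained in some face $[n]\setminus\{i\}$ with $i \neq k$, satisfying equation \eqref{eq.simplex} for each such $j$. Equivalently, $f_{j}$ is prescribed for every subset $j \subseteq [n]$ of size at least $2$, except for the two ``missing'' cells $j_{\mathrm{top}} = [n]$ and $j_{k} = [n]\setminus\{k\}$. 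To extend $\phi$ to a map $\Delta^{n} \to N(\mathcal{A})$, I must choose $f_{j_{\mathrm{top}}}$ and $f_{j_{k}}$ so that \eqref{eq.simplex} holds for these two remaining values of $j$.

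First I would set $f_{j_{\mathrm{top}}} = 0$. Then equation \eqref{eq.simplex} for $j = [n]$ reduces, after isolating the unique occurrence of $f_{j_{k}}$ on the right-hand side (coming from the $i=k$ term in $\sum_{0<i<n} f_{[n]\setminus\{i\}}$), to an explicit formula
\begin{equation*}
f_{j_{k}} \;=\; \sum_{\substack{0<i<n\\ i\neq k}} f_{[n]\setminus\{i\}} \;+\; \sum_{s}\sum_{\mathrm{decomp}_{s}\in D[n]} \mu^{s}\bigl(f_{j_{1}},\ldots,f_{j_{s}}\bigr),
\end{equation*}
in which every term on the right involves only known horn data (we have used that we work over $\mathbb{F}_{2}$ and that $\mu^{1}(f_{j_{\mathrm{top}}})=0$). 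This \emph{defines} $f_{j_{k}}$, and what remains is to check that equation \eqref{eq.simplex} for $j = j_{k}$ is automatically satisfied with this choice.

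That verification is the heart of the argument, and is where I expect the main obstacle to lie. One applies $\mu^{1}$ to the defining formula for $f_{j_{k}}$ above, then substitutes, for each $\mu^{1}(f_{[n]\setminus\{i\}})$ appearing, the equation \eqref{eq.simplex} that is already known to hold (since these are horn-data simplices), and for each $\mu^{1}\mu^{s}(f_{j_{1}},\ldots,f_{j_{s}})$ the quadratic $A_{\infty}$ relation of $\mathcal{A}$. A careful combinatorial accounting then shows the double sum collapses to precisely the right-hand side of \eqref{eq.simplex} for $j = j_{k}$: the face-of-face terms $f_{j_{k} \setminus \{i\}}$ match the inner-face contributions, and the $\mu^{s}(f_{j_{1}},\ldots,f_{j_{s}})$ with supports inside $j_{k}$ match the decomposition contributions, while every remaining term cancels in pairs (or vanishes mod $2$) by virtue of the $A_{\infty}$ relations. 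This is the same bookkeeping carried out in the dg setting by Lurie \cite{citeLurieHigherAlgebraa} (Proposition 1.3.1.10) and extended to the $A_{\infty}$ setting by Tanaka \cite{citeLeeAfunctorfromLagrangiancobordismstotheFukayacategory} and Faonte \cite{citeFaonteSimplicialNerve}; rather than reproducing all of it, I would reduce to their computation after setting up the sign conventions (trivial here as we work in characteristic $2$) and noting that the strict unitality hypothesis is only used to make the degeneracy maps in Definition \ref{def.nerve} well defined, not to establish horn filling itself.
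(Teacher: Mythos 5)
Your proposal is correct and follows essentially the same route as the paper's proof: fill the horn by setting $f_{[n]}=0$, define $f_{[n]\setminus\{k\}}$ by the formula forced by equation \eqref{eq.simplex} for $j=[n]$, and then note that the remaining verification of \eqref{eq.simplex} for $j=[n]\setminus\{k\}$ is a long but routine computation with the $A_{\infty}$ associativity equations. The paper likewise defers that final bookkeeping (checking it explicitly only for small $n$), so the two arguments coincide in substance.
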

This proposition, has been central for us.  For the reader's convenience we outline the proof here. 
\begin{proof} Suppose we have an inner horn $\rho_k: \Lambda ^{n} _{k} \to
 NA$. In particular,  by the construction of the
   simplices of $NA$, corresponding to the faces of
   the horn, there are determined  $f _{j} \in hom (A) $,
   for all $j: [n _{j}] \to [n] $ except $j=
   [n] -\{k\} $ and $j= [n]$. Set $f _{[n]} =0$, and set
\begin{equation*}  f _{ [n] - \{k\}} =  \sum _{0 < i <
n; i \neq k} f _{ [n]- \{i \}} + \sum _{s \geq 2} \sum _{decomp _{s} \in D [n]} \mu ^{s} (f
_{ {j_1}}, \ldots, f _{ {j _{s}}}),
\end{equation*}
Only thing left to check is that as defined the
data $\{f _{j}\}$ determines  a $n$-simplex.

This amounts to verifying a pair of identities:
\begin{align*}
   0 & =  \sum _{0 < i < n} f _{
   [n] -i} + \sum _{s \geq 2} \sum _{decomp _{s} \in D
[n]} \mu ^{s} (f _{ {j_1}},
 \ldots, f _{{j _{s}}}),  \\
\mu ^{1} (f _{n-\{k\}}) & = \sum _{0 < i < n-1; i
   \neq k} f _{
   [n] -k -i} + \sum _{s \geq 2} \sum _{decomp
_{s} \in D ([n-1])} \mu ^{s} (f _{{j _{k} \circ j_1}},
 \ldots, f _{{j _{k} \circ j _{s}}}),
\end{align*} 
where $j _{k}: [n-1] \to [n]  $   is the inclusion
with image $[n] - \{k\}$.  

The first identity is immediate by the definition of $f
   _{[n] - \{k\}  }$.
For the second identity, a direct calculation is
   long but straightforward, using the $A
   _{\infty} $ associativity
equations. For $n=2$ this is automatic  and for
   $n=3 $ this is can be checked in
a few lines. However, for general $n$ it is certainly better to
prove this using conceptual methods as is done in
Faonte ~\cite{citeFaonteSimplicialNerve}. 
\end{proof}
For $F: A \to B$ an $A _{\infty} $ functor we define $NF: NA \to NB $ 
via the assignment:
\begin{equation*} f _{j} \mapsto \sum _{decomp_s \in D [n_j]} F ^{s} (f
_{j_1}, \ldots,  f_{j_s}).
\end{equation*}
\begin{lemma} \cite{citeLeeAfunctorfromLagrangiancobordismstotheFukayacategory},
 \cite{citeFaonteSimplicialNerve}
   The assignment $A \mapsto NA$, and $F \mapsto NF$ as above, determines a
functor $$N: A _{\infty} -Cat ^{unit}   \to \infty
   -\mathcal{C}at.$$
\end{lemma}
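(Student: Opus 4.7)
The plan is to define $N$ on morphisms of $A_\infty\text{-}Cat^{unit}$ and then check that the resulting assignment preserves identities, composition, and the simplicial structure, with the main technical point being that the $A_\infty$-nerve equation \eqref{eq.simplex} is preserved. Given a (strictly unital) $A_\infty$-functor $F = \{F^k\}_{k \geq 1}: A \to B$, I would define $NF: NA \to NB$ on an $n$-simplex $f = (X_0, \ldots, X_n; \{f_j\})$ by setting the objects to be $FX_0, \ldots, FX_n$ and, for each monomorphism $j: [n_j] \to [n]$ with $|n_j| \geq 2$, declaring
\begin{equation*}
(NF(f))_j = \sum_{s \geq 1} \sum_{decomp_s \in D[n_j]} F^s(f_{j_1}, \ldots, f_{j_s}),
\end{equation*}
in direct analogy with how an $A_\infty$-functor produces a new composable sequence from a given one. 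On vertices $NF$ is just the object map of $F$.

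Next, I would verify that the defining equation \eqref{eq.simplex} of Definition \ref{def.nerve} holds for $NF(f)$. This is the heart of the argument. Expanding $\mu^1((NF(f))_{j})$ using the definition above and applying the $A_\infty$-functor relations
\begin{equation*}
\sum \mu^k \bigl(F^{i_1}, \ldots, F^{i_k}\bigr) = \sum F^r\bigl(\mathrm{id}, \ldots, \mu^m, \ldots, \mathrm{id}\bigr)
\end{equation*}
one may rewrite the right hand side as a sum of two kinds of terms: those where some internal $\mu^m$ is applied to a contiguous block of $f_j$'s (which, by the assumption that $f$ is an $n$-simplex of $NA$, can be replaced by $\mu^1 f_{j'} + \sum f_{j' - \{i\}}$), and those of the outer form $\mu^s(F^{i_1}, \ldots, F^{i_s})$. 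Collecting terms and using that $F$ is strictly unital so that $F^{\geq 2}$ vanishes when any argument is a unit, I would match this exactly with the right hand side of \eqref{eq.simplex} applied to $NF(f)$. This is the direct analogue of the dg case worked out by Lurie, only carried out with higher $F^k$'s and higher $\mu^k$'s instead of a differential and a product.

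The remaining verifications are formal. To see that $NF$ is a simplicial map, for an injection $k: [m] \to [n]$ both the face operation and the formula for $(NF(\cdot))_j$ are defined by pullback along $k$, so they commute on the nose; for a degeneracy $s_i$, the only new terms involve $f_{\{i,i+1\}} = e_{X_i}$, and strict unitality of $F$ gives $F^1(e_{X_i}) = e_{FX_i}$ and $F^{\geq 2}(\ldots, e_{X_i}, \ldots) = 0$, which is precisely what is needed to commute with the degeneracy formula in Definition \ref{def.nerve}. Finally, functoriality is a direct calculation: $N(\mathrm{id}_A) = \mathrm{id}_{NA}$ because $\mathrm{id}^1 = \mathrm{id}$ and $\mathrm{id}^{\geq 2} = 0$, and $N(G \circ F) = NG \circ NF$ follows by unwinding the standard $A_\infty$-composition formula $(G \circ F)^s = \sum G^k(F^{i_1}, \ldots, F^{i_k})$ and comparing with the definition of $NG \circ NF$ on simplices.

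The main obstacle is the verification in the second step: it is essentially a bookkeeping exercise for which one must carefully match, decomposition by decomposition, the output of the $A_\infty$-functor relations with the terms demanded by \eqref{eq.simplex}. The strict unitality hypothesis (which is why we require $A_\infty\text{-}Cat^{unit}$, after the unital replacement of Lemma \ref{lemma:unitalreplacement}) is used exactly to ensure that the degeneracy formula is respected and that no anomalous terms arise from units in the higher $F^k$.
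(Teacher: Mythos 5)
Your definition of $NF$ on simplices is exactly the one the paper gives, and the paper then simply omits the verification (citing Tanaka), so your proposal takes the same approach while filling in the omitted checks: compatibility with equation \eqref{eq.simplex} via the $A_\infty$-functor relations, compatibility with faces and degeneracies using strict unitality, and functoriality from the composition formula for $A_\infty$-functors. This is correct and consistent with the paper's (sketchy) treatment.
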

The details on why this constitutes a functor $N$
are omitted. 
\bibliographystyle{siam}    
\bibliography{C:/Users/yasha/texmf/bibtex/bib/link} 
%
\end{document}